\newif\ifpersonal
\numberwithin{equation}{section}
\theoremstyle{plain}
\newtheorem{thm}[equation]{Theorem}
\newtheorem{lem}[equation]{Lemma}
\newtheorem{prop}[equation]{Proposition}
\newtheorem{cor}[equation]{Corollary}
\theoremstyle{definition}
\newtheorem{defin}[equation]{Definition}
\newcommand{\personal}[1]{\textcolor[rgb]{0,0,1}{(Personal: #1)}}
\newcommand{\todo}[1]{\textcolor{red}{(Todo: #1)}}
\newcommand*{\personal}[1]{\ignorespaces}
\newcommand*{\todo}[1]{\ignorespaces}
\newcommand{\Z}{\mathbb Z}
\newcommand{\rH}{\mathrm H}
\newcommand{\rR}{\mathrm R}
\newcommand{\cC}{\mathcal C}
\newcommand{\cD}{\mathcal D}
\newcommand{\cE}{\mathcal E}
\newcommand{\cF}{\mathcal F}
\newcommand{\cH}{\mathcal H}
\newcommand{\cG}{\mathcal G}
\newcommand{\cO}{\mathcal O}
\newcommand{\cP}{\mathcal P}
\newcommand{\cS}{\mathcal S}
\newcommand{\cT}{\mathcal T}
\newcommand{\cX}{\mathcal X}
\newcommand{\cY}{\mathcal Y}
\DeclareFontFamily{U}{BOONDOX-calo}{\skewchar\font=45 }
\DeclareFontShape{U}{BOONDOX-calo}{m}{n}{<-> s*[1.05] BOONDOX-r-calo}{}
\DeclareFontShape{U}{BOONDOX-calo}{b}{n}{<-> s*[1.05] BOONDOX-b-calo}{}
\DeclareMathAlphabet{\mathcalboondox}{U}{BOONDOX-calo}{m}{n}
\newcommand{\bbA}{\mathbb A}
\newcommand{\bA}{\mathbf A}
\newcommand{\bD}{\mathbf D}
\newcommand{\bP}{\mathbf P}
\let\save@mathaccent\mathaccent
\newcommand*\if@single[3]{%
	\setbox0\hbox{${\mathaccent"0362{#1}}^H$}%
	\setbox2\hbox{${\mathaccent"0362{\kern0pt#1}}^H$}%
	\ifdim\ht0=\ht2 #3\else #2\fi
}
\newcommand*\rel@kern[1]{\kern#1\dimexpr\macc@kerna}
\newcommand*\widebar[1]{\@ifnextchar^{{\wide@bar{#1}{0}}}{\wide@bar{#1}{1}}}
\newcommand*\wide@bar[2]{\if@single{#1}{\wide@bar@{#1}{#2}{1}}{\wide@bar@{#1}{#2}{2}}}
\newcommand*\wide@bar@[3]{%
	\begingroup
	\def\mathaccent##1##2{%
		\let\mathaccent\save@mathaccent
		\if#32 \let\macc@nucleus\first@char \fi
		\setbox\z@\hbox{$\macc@style{\macc@nucleus}_{}$}%
		\setbox\tw@\hbox{$\macc@style{\macc@nucleus}{}_{}$}%
		\dimen@\wd\tw@
		\advance\dimen@-\wd\z@
		\divide\dimen@ 3
		\@tempdima\wd\tw@
		\advance\@tempdima-\scriptspace
		\divide\@tempdima 10
		\advance\dimen@-\@tempdima
		\ifdim\dimen@>\z@ \dimen@0pt\fi
		\rel@kern{0.6}\kern-\dimen@
		\if#31
		\overline{\rel@kern{-0.6}\kern\dimen@\macc@nucleus\rel@kern{0.4}\kern\dimen@}%
		\advance\dimen@0.4\dimexpr\macc@kerna
		\let\final@kern#2%
		\ifdim\dimen@<\z@ \let\final@kern1\fi
		\if\final@kern1 \kern-\dimen@\fi
		\else
		\overline{\rel@kern{-0.6}\kern\dimen@#1}%
		\fi
	}%
	\macc@depth\@ne
	\let\math@bgroup\@empty \let\math@egroup\macc@set@skewchar
	\mathsurround\z@ \frozen@everymath{\mathgroup\macc@group\relax}%
	\macc@set@skewchar\relax
	\let\mathaccentV\macc@nested@a
	\if#31
	\macc@nested@a\relax111{#1}%
	\else
	\def\gobble@till@marker##1\endmarker{}%
	\futurelet\first@char\gobble@till@marker#1\endmarker
	\ifcat\noexpand\first@char A\else
	\def\first@char{}%
	\fi
	\macc@nested@a\relax111{\first@char}%
	\fi
	\endgroup
}
\newcommand{\PSh}{\mathrm{PSh}}
\newcommand{\Sh}{\mathrm{Sh}}
\newcommand{\Geom}{\mathrm{Geom}}
\newcommand{\infcat}{$\infty$-category\xspace}
\newcommand{\infcats}{$\infty$-categories\xspace}
\newcommand{\infsite}{$\infty$-site\xspace}
\newcommand{\inftopos}{$\infty$-topos\xspace}
\newcommand{\inftopoi}{$\infty$-topoi\xspace}
\newcommand{\rSet}{\mathrm{Set}}
\newcommand{\Ab}{\mathrm{Ab}}
\newcommand{\DAb}{\cD(\Ab)}
\newcommand{\tauet}{\tau_\mathrm{\acute{e}t}}
\newcommand{\bPsm}{\bP_\mathrm{sm}}
\newcommand{\Mod}{\textrm{-}\mathrm{Mod}}
\newcommand{\Coh}{\mathrm{Coh}}
\newcommand{\Cohb}{\mathrm{Coh}^\mathrm{b}}
\newcommand{\Cohh}{\mathrm{Coh}^\heartsuit}
\newcommand{\St}{\mathrm{St}}
\newcommand{\An}{\mathrm{An}}
\newcommand{\Afd}{\mathrm{Afd}}
\newcommand{\Top}{\mathcal T\mathrm{op}}
\newcommand{\dAn}{\mathrm{dAn}}
\newcommand{\dAnk}{\mathrm{dAn}_k}
\newcommand{\Ank}{\mathrm{An}_k}
\newcommand{\cTank}{\cT_{\mathrm{an}}(k)}
\newcommand{\cTdisck}{\cT_{\mathrm{disc}}(k)}
\newcommand{\Strloc}{\mathrm{Str}^\mathrm{loc}}
\newcommand{\RTop}{\tensor*[^\rR]{\Top}{}}
\newcommand{\Tor}{\mathrm{Tor}}
\newcommand{\dAfd}{\mathrm{dAfd}}
\newcommand{\dAfdk}{\mathrm{dAfd}_k}
\newcommand{\trunc}{\mathrm{t}_0}
\newcommand{\CRing}{\mathrm{CRing}}
\newcommand{\Cat}{\mathrm{Cat}}
\newcommand{\fib}{\mathrm{fib}}
\newcommand{\DerAn}{\mathrm{Der}\an}
\newcommand{\anL}{\mathbb L\an}
\newcommand{\bfMap}{\mathbf{Map}}
\newcommand{\cHom}{\cH \mathrm{om}}
\newcommand{\bfHom}{\mathbf{Hom}}
\newcommand{\Perf}{\mathrm{Perf}}
\newcommand{\an}{^\mathrm{an}}
\newcommand{\alg}{^\mathrm{alg}}
\newcommand{\inv}{^{-1}}
\newcommand{\kanal}{$k$-analytic\xspace}
\newcommand{\op}{^\mathrm{op}}
\tikzset{
  closed/.style = {decoration = {markings, mark = at position 0.5 with { \node[transform shape, xscale = .8, yscale=.4] {/}; } }, postaction = {decorate} },
  open/.style = {decoration = {markings, mark = at position 0.5 with { \node[transform shape, scale = .7] {$\circ$}; } }, postaction = {decorate} }
}
\DeclareMathOperator{\Fun}{Fun}
\DeclareMathOperator{\Hom}{Hom}
\DeclareMathOperator{\Map}{Map}
\DeclareMathOperator{\Sp}{Sp}
\DeclareMathOperator*{\colim}{colim}
\DeclareMathOperator*{\cotimes}{\widehat{\otimes}}
\begin{document}
\title{Derived Hom spaces in rigid analytic geometry}

\author{Mauro PORTA}
\address{Mauro PORTA, Institut de Recherche Mathématique Avancée, 7 Rue René Descartes, 67000 Strasbourg, France}
\email{porta@math.unistra.fr}

\author{Tony Yue YU}
\address{Tony Yue YU, Laboratoire de Mathématiques d'Orsay, Université Paris-Sud, 91405 Orsay, France}
\email{yuyuetony@gmail.com}
\date{January 23, 2018}

\subjclass[2010]{Primary 14G22; Secondary 14D23, 18G55}
\keywords{mapping stack, Hom scheme, representability, derived geometry, rigid analytic geometry, non-archimedean geometry, Tate acyclicity, projection formula, proper base change}

\begin{abstract}
	We construct a derived enhancement of Hom spaces between rigid analytic spaces.
	It encodes the hidden deformation-theoretic informations of the underlying classical moduli space.
	The main tool in our construction is the representability theorem in derived analytic geometry, which has been established in our previous work.
	The representability theorem provides us sufficient and necessary conditions for an analytic moduli functor to possess the structure of a derived analytic stack.
	In order to verify the conditions of the representability theorem, we prove several general results in the context of derived non-archimedean analytic geometry: derived Tate acyclicity, projection formula, and proper base change.
	These results also deserve independent interest themselves.
	Our main motivation comes from non-archimedean enumerative geometry.
	In our subsequent works, we will apply the derived mapping stacks to obtain non-archimedean analytic Gromov-Witten invariants.
\end{abstract}

\maketitle

\tableofcontents

\section{Introduction}

Fix a complete non-archimedean field $k$ with nontrivial valuation.
Let $S$ be a rigid \kanal space, and $X, Y$ rigid \kanal spaces over $S$.
Assume that $X$ is proper and flat over $S$, and that $Y$ is separated over $S$.
Let $\An_S$ denote the category of rigid \kanal spaces over $S$.
Let $\rSet$ denote the category of sets.
Consider the functor
\begin{align*}
	\bfHom_S(X,Y)\colon\An_S &\longrightarrow\rSet\\
	T &\longmapsto\Hom_T(X_T,Y_T),
\end{align*}
where $\Hom_T(X_T,Y_T)$ denotes the set of $T$-morphisms from $X_T\coloneqq X\times_S T$ to $Y_T\coloneqq Y\times_S T$.

Conrad and Gabber proved that the functor $\bfHom_S(X,Y)$ is representable by a rigid \kanal space separated over $S$ (cf.\ \cite[Proposition 5.3.3]{Conrad_Spreading-out}).
In this paper, we construct a derived enhancement of the space $\bfHom_S(X,Y)$.
It encodes the hidden deformation-theoretic informations of the underlying classical moduli space.
Here is the precise statement:

Let $\dAn_S$ denote the \infcat of derived (rigid\footnote{When there is no confusion, we will omit the word ``rigid'' in this paper.}) \kanal spaces over $S$.
Let $\cS$ denote the \infcat of topological spaces.
Consider the $\infty$-functor
\begin{align*}
	\bfMap_S(X,Y)\colon\dAn_S &\longrightarrow\cS\\
	T &\longmapsto\Map_T(X_T,Y_T),
\end{align*}
where $\Map_T(X_T,Y_T)$ denotes the mapping space between $X_T$ and $Y_T$ in the \infcat of derived \kanal spaces over $T$.

\begin{thm}[cf.\ \cref{thm:Map}] \label{thm:Map_intro}
	The $\infty$-functor $\bfMap_S(X,Y)$ is representable by a derived \kanal space.
\end{thm}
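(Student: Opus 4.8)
The plan is to verify the hypotheses of the representability theorem in derived analytic geometry established in our earlier work. Concretely, I would check that the $\infty$-functor $\bfMap_S(X,Y)$, tested on derived $k$-affinoid spaces over $S$, satisfies the four conditions of that theorem: it is (i) a hypercomplete sheaf for the \'etale topology, (ii) nilcomplete and infinitesimally cohesive, (iii) equipped with a global analytic cotangent complex, and (iv) has a truncation $\trunc\bfMap_S(X,Y)$ representable by a classical $k$-analytic space. Since representability is local on the base, I may assume $S$ affinoid throughout, and reduce to affinoid test objects $T$.

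Several of these conditions are formal. The sheaf condition follows from \'etale descent for derived \kanal spaces together with the observation that the formation of $X_T$, $Y_T$, and the mapping space $\Map_T(X_T,Y_T)$ commutes with the relevant limits; compatibility with filtered colimits is checked similarly. Nilcompleteness and infinitesimal cohesiveness are inherited from the corresponding properties of the target $Y$, since a morphism out of $X_T$ into $Y_T$ is controlled pointwise on $Y$ and mapping functors preserve Postnikov towers and square-zero gluing. For the truncation, I would identify $\trunc\bfMap_S(X,Y)$ with the classical functor $\bfHom_S(X,Y)$, which by Conrad--Gabber is representable by a separated rigid \kanal space; this supplies condition (iv) essentially for free.

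The substance of the proof lies in constructing the global analytic cotangent complex. At a $T$-point classified by a morphism $f\colon X_T\to Y_T$, deformation theory predicts a relative tangent complex of the form $\pi_{T,*}\,f^*\bbT_{Y_T/T}$, where $\pi_T\colon X_T\to T$ is the proper flat projection and $\bbT_{Y_T/T}$ is the relative analytic tangent complex. To make this precise and functorial I would pull back the coherent cotangent complex of $Y$ along $f$, apply the projection formula to rewrite the relevant derived pushforward, and invoke the coherence of proper pushforward---which ultimately rests on derived Tate acyclicity---to conclude that $\pi_{T,*}\,f^*\bbT_{Y_T/T}$ is a perfect complex, whose dual furnishes the relative analytic cotangent complex $\anL_{\bfMap_S(X,Y)/S}$.

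The main obstacle is the behaviour of this complex under base change: to obtain a \emph{global} cotangent complex in the sense required by the representability theorem, I must show that for every morphism $T'\to T$ the formation of $\pi_{T,*}\,f^*\bbT_{Y_T/T}$ commutes with base change along $T'\to T$. This is precisely where proper base change in derived non-archimedean analytic geometry enters, and establishing it in the derived analytic setting---with the attendant coherence and finiteness---is the technical heart of the argument. Once base-change compatibility is secured, the existence of the global analytic cotangent complex follows, and the representability theorem yields that $\bfMap_S(X,Y)$ is representable by a derived \kanal space.
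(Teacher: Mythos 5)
Your proposal matches the paper's proof essentially step for step: it invokes the same representability theorem, verifies infinitesimal cohesiveness and convergence by showing the internal mapping functor inherits them from $Y$, identifies the truncation with $\bfHom_S(X,Y)$ via Conrad--Gabber, and constructs the global analytic cotangent complex as the dual of $\pi_{T,*}f^*\bbT_{Y_T/T}$ (the paper's plus pushforward $\pi_+\,\mathrm{ev}^*(\anL_{Y/S})$), with proper base change securing the required base-change compatibility. This is exactly the paper's route, including its reliance on derived Tate acyclicity, the projection formula, and proper base change as the supporting pillars.
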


The main tool in the proof of \cref{thm:Map_intro} is the representability theorem in derived analytic geometry, which has been established in our previous work \cite{Porta_Yu_Representability}.
Let us recall the statement:

\begin{thm}[Representability, cf.\ {\cite[Theorem 1.1]{Porta_Yu_Representability}}]
	Let $F$ be a stack over the étale site of derived \kanal spaces.
	The followings are equivalent:
	\begin{enumerate}
		\item $F$ is a derived \kanal stack;
		\item $F$ is compatible with Postnikov towers, has a global analytic cotangent complex, and its truncation $\trunc(F)$ is an (underived) $k$-analytic stack.
	\end{enumerate}	
\end{thm}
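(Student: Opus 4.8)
The implication (1) $\Rightarrow$ (2) is the routine direction, and I would treat it first in order to pin down the precise meaning of each condition. If $F$ is a derived \kanal stack it admits, by definition, a smooth atlas $p\colon U\to F$ with $U$ a derived \kanal space. Each derived \kanal space carries an analytic cotangent complex $\anL$, and smooth descent along $p$ together with the base-change compatibility of $\anL$ produces a global analytic cotangent complex for $F$. Compatibility with Postnikov towers holds for derived \kanal spaces, as it is a property of the analytic rings representing the derived Stein/affinoid pieces, and it is inherited by $F$ because the colimit presentation of $F$ by the \v{C}ech nerve of $p$ commutes with the relevant limits. Finally $\trunc(U)\to\trunc(F)$ is a smooth atlas by a classical \kanal space, so $\trunc(F)$ is an underived \kanal stack.

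The substance of the theorem is (2) $\Rightarrow$ (1), for which I would argue by induction along the Postnikov tower. Writing $F\simeq\lim_n\tau_{\le n}F$, which is exactly the content of compatibility with Postnikov towers, it suffices to show that each truncation $\tau_{\le n}F$ is a derived \kanal stack and that the limit remains one. The base case $n=0$ is the hypothesis that $\trunc(F)=\tau_{\le 0}F$ is an underived \kanal stack, viewed inside derived geometry. For the inductive step, the natural map $\tau_{\le n}F\to\tau_{\le n-1}F$ realizes $\tau_{\le n}F$ as a square-zero extension of $\tau_{\le n-1}F$ by the shifted homotopy sheaf $\pi_n(\cO_F)[n]$, and the classifying derivation of this extension is encoded precisely by the global analytic cotangent complex furnished by (2). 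Assuming $\tau_{\le n-1}F$ is a derived \kanal stack, I would lift a smooth atlas $U_{n-1}\to\tau_{\le n-1}F$ to a smooth atlas $U_n\to\tau_{\le n}F$: since $U_{n-1}\to\tau_{\le n-1}F$ is smooth, the obstruction to deforming it across the square-zero extension, a class valued in the pullback of the shifted cotangent complex, vanishes, so the chart lifts, and one checks that the lift $U_n$ is again a derived \kanal space representing the square-zero thickening. This shows $\tau_{\le n}F$ is a derived \kanal stack.

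It then remains to assemble the charts in the limit. The lifts $U_n$ can be chosen compatibly, yielding a tower whose limit $U\simeq\lim_n U_n$ maps to $F\simeq\lim_n\tau_{\le n}F$; one verifies that $U$ is a derived \kanal space, locally a derived Stein space whose analytic structure sheaf is the limit of the truncated ones, and that $U\to F$ is smooth and surjective, hence an atlas exhibiting $F$ as a derived \kanal stack. Throughout, the representability of the diagonal needed for geometricity is propagated through the induction, since the diagonal of a square-zero extension is controlled by that of the base together with the coherent module, so the geometric level does not grow.

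I expect the main obstacle to be genuinely analytic rather than formal. One must carry out the deformation-theoretic lifting inside the pregeometry $\cTank$, so that at every stage the lifted chart carries an honest analytic ring / $\cTan$-structure and not merely an underlying derived algebraic one; this is exactly why the hypothesis demands a global analytic cotangent complex $\anL$ rather than its algebraic counterpart, as the obstruction and deformation classes governing the square-zero extensions must be computed in the analytic theory. The delicate point is then to show that the limit $U\simeq\lim_n U_n$ remains a genuine derived \kanal space, Stein or affinoid locally, rather than a merely formal or algebraic object. Controlling this passage to the limit, ensuring that analyticity is preserved and that the geometric level stays bounded, is where compatibility with Postnikov towers does the decisive work.
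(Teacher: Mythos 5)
First, a point of calibration: the paper you are working from does not prove this theorem at all. It is recalled verbatim from \cite[Theorem 1.1]{Porta_Yu_Representability} and used as a black box (via \cite[Theorem 7.1]{Porta_Yu_Representability}) in the proof of \cref{prop:Map_conditional}. So the only meaningful comparison is with the proof in that earlier work, and at the level of architecture your outline does match it: (1)$\Rightarrow$(2) by descent along a smooth atlas, and (2)$\Rightarrow$(1) by an Artin--Lurie-style d\'evissage in which the truncations form a tower of square-zero extensions, an atlas of $\trunc(F)$ is lifted step by step through the tower, and compatibility with Postnikov towers assembles the lifted charts into an atlas for $F$. The difficulty you single out as ``genuinely analytic rather than formal'' is also correctly identified: carrying out the square-zero formalism inside $\cTank$-structures (analytic modules, analytic derivations, analytification of square-zero extensions, e.g.\ the pushout square from \cite[Theorem 6.5]{Porta_Yu_Representability} recalled in \cref{sec:pullback_finite_maps}) is exactly what occupies most of the cited paper.

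As a proof, however, the sketch has concrete gaps. (i) The vanishing of the obstruction to lifting a chart across a square-zero extension is not a formal consequence of smoothness: the obstruction lives in a positive-degree mapping space out of $\anL_{U/\bullet}$ against the pulled-back coherent module, and it vanishes because $U$ is affinoid (so higher coherent cohomology vanishes, by Tate acyclicity and Kiehl's theorem; in the derived setting this is \cref{thm:derived_Tate_acyclicity} and \cref{thm:derived_Kiehl}) together with the fact that smoothness makes $\anL_{U/\bullet}$ a vector bundle; ``smooth hence unobstructed'' elides the computation that does the work. (ii) To assert that $\tau_{\le n}F \to \tau_{\le n-1}F$ is a square-zero extension \emph{of stacks} one needs the infinitesimally cartesian condition, which in the quoted statement is packaged into the hypotheses; your argument uses it tacitly and should say so. (iii) There is a variance slip in the assembly step: the lifted charts form a \emph{direct} system $U_0 \hookrightarrow U_1 \hookrightarrow \cdots$ of thickenings, whose colimit as derived spaces (equivalently, the inverse limit of the structure sheaves) is the chart $U$, and it is convergence of $F$ that turns the compatible family $U_n \to F$ into a map $U \to F$; writing $U \simeq \lim_n U_n$ as a limit of spaces gets the direction backwards, even though your remark about the structure sheaf shows you have the right picture. (iv) The claim that geometricity --- including representability of the diagonal and control of the geometric level --- passes through square-zero extensions is a substantive lemma in the cited paper, not a one-line remark. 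None of these defeats the strategy, which is the right one; they are precisely the places where the proof in \cite{Porta_Yu_Representability} expends its effort.
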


It is non-trivial to construct a global analytic cotangent complex for the $\infty$-functor $\bfMap_S(X,Y)$.
For this purpose, we prove several general results in the context of derived non-archimedean analytic geometry:
derived Tate acyclicity, projection formula, and proper base change.
Since these results deserve independent interest themselves, let us also mention them here:

\begin{thm}[Derived Tate acyclicity, cf.\ \cref{thm:derived_Kiehl}]
	Let $X=(\cX,\cO_X)$ be a derived $k$-affinoid space.
	Let $A$ be the algebra of functions on $X$, which is a simplicial commutative $k$-algebra.
	Then the global section functor gives an equivalence from the stable \infcat of coherent sheaves on $X$ to the stable \infcat of coherent $A$-modules.
\end{thm}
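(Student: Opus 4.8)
The plan is to reduce the statement to the classical Tate acyclicity theorem (Kiehl's theorem), the genuinely derived content being concentrated in the structure sheaf alone. The key preliminary observation is that the underlying topos of $X=(\cX,\cO_X)$ coincides with that of the ordinary $k$-affinoid space $\trunc(X)$, so that the Grothendieck topology of admissible affinoid covers is unchanged; only the structure sheaf has been enhanced to a sheaf of simplicial commutative $k$-algebras with $\pi_0\cO_X=\cO_{\trunc(X)}$ and $\pi_0(A)$ the classical affinoid algebra. I would then set up the comparison functor $\Gamma(X,-)\colon\Coh(X)\to\Coh(A)$, where $\Coh(A)$ denotes the stable \infcat of coherent $A$-modules, together with its candidate inverse, the localization functor $M\mapsto M\an$ obtained by sheafifying $U\mapsto M\otimes_A\cO_X(U)$; the goal is to show that on coherent objects the unit $\cF\to(\Gamma(X,\cF))\an$ and the counit $\Gamma(X,M\an)\to M$ are equivalences.

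The heart of the argument is to establish the cohomological form of derived Tate acyclicity: that $\Gamma(X,-)$ is t-exact for the natural t-structures, whose hearts are the classical coherent sheaves $\Cohh(X)$ on $\trunc(X)$ and the finitely generated $\pi_0(A)$-modules respectively. Concretely I would prove that for a coherent sheaf $\cF$ one has $\pi_i\Gamma(X,\cF)\cong\Gamma(\trunc(X),\pi_i\cF)$ and that the higher \v{C}ech cohomology of $\cF$ along affinoid covers vanishes. Each homotopy sheaf $\pi_i\cF$ is an ordinary coherent sheaf on $\trunc(X)$, for which both the vanishing of higher cohomology and the identification of global sections are exactly Kiehl's theorem; passing from the homotopy sheaves to $\cF$ itself is then controlled by the hypercohomology spectral sequence of the finite Postnikov tower of $\cF$, which converges because coherent objects are bounded. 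Specializing to $\cF=\cO_X$ gives the acyclicity of the structure sheaf, namely $\pi_i\Gamma(X,\cO_X)\cong\pi_i(A)$.

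With acyclicity in hand, the equivalence follows by d\'evissage. On hearts, the functor induced by $\Gamma(X,-)$ is the classical global-section functor, which is an equivalence by Kiehl's theorem, with inverse induced by $(-)\an$. Since every coherent object on either side is bounded, it sits in a finite Postnikov tower whose associated graded pieces are shifts of objects of the heart; by the t-exactness of the previous step both $\Gamma(X,-)$ and $(-)\an$ commute with truncations, so an induction on the length of the tower promotes the heart equivalence to an equivalence on all bounded, hence on all coherent, objects. The same homotopy-group bookkeeping shows that $\Gamma(X,-)$ preserves coherence and that $(-)\an$ lands in $\Coh(X)$, so the two functors are mutually inverse.

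The step I expect to be the main obstacle is the t-exactness claimed in the second paragraph. One must control precisely how the higher \v{C}ech cohomology of the simplicial structure sheaf interacts with the homotopy sheaves of $\cF$, and verify that the relevant spectral sequence degenerates to the expected identification $\pi_i\Gamma(X,\cF)\cong\Gamma(\trunc(X),\pi_i\cF)$ without contributions from positive cohomological degrees. Once this cohomological Tate acyclicity is established, both the preservation of coherence and the d\'evissage to the final equivalence are essentially formal.
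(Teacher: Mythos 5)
Your overall architecture parallels the paper's, and the half you identify as the main obstacle is in fact the half that goes through exactly as you plan: the $t$-exactness of $\Gamma$ is the paper's \cref{thm:derived_Tate_acyclicity}, proved via the spectral sequence $E_2^{i,j} = \pi_i(\Gamma(\pi_j(\cF))) \Rightarrow \pi_{i+j}(\Gamma(\cF))$, which converges and degenerates because classical Tate acyclicity kills every term with $i \neq 0$. The genuine gap is in the step you treat as formal: the behavior of the inverse functor. You define $M\an$ by sheafifying $U \mapsto M \otimes_A \cO_X(U)$ and then assert that ``both $\Gamma(X,-)$ and $(-)\an$ commute with truncations'' by the $t$-exactness of the previous step --- but the previous step only gives $t$-exactness of $\Gamma$, and $t$-exactness of $(-)\an$ neither follows from it nor is formal. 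The derived tensor product $M \otimes_A \cO_X(U)$ has homotopy groups built from $\Tor^A_i(M, \cO_X(U))$, and nothing you have established makes these vanish; sheafification does not obviously kill them either. This already breaks your base case: for $\cF \in \Cohh(X)$, classical Kiehl identifies $\pi_0$ of $(\Gamma(\cF))\an$ with $\cF$, but a priori $(\Gamma(\cF))\an$ has nonzero higher homotopy sheaves, so the unit $\cF \to (\Gamma(\cF))\an$ need not be an equivalence even on the heart. The missing ingredient is precisely the paper's \cref{lem:global_section_etale_is_flat}: for an étale morphism $Y \to X$ of derived $k$-affinoid spaces, the induced map $\Gamma(\cO\alg_X) \to \Gamma(\cO\alg_Y)$ is \emph{flat}. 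That lemma is not formal --- its proof uses the already-established derived Tate acyclicity together with classical Kiehl and Conrad's fpqc descent to show the map is strong. With it, the paper shows that $U \mapsto \cO_X(U) \otimes_A M$ is \emph{already} a hypercomplete sheaf (no sheafification needed) and that the resulting functor commutes with taking homotopy sheaves, which is exactly the $t$-exactness your dévissage requires.

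A second, smaller issue: you claim that ``every coherent object on either side is bounded,'' but in the paper's definition $\Coh(X)$ only requires each $\pi_i(\cF)$ to be coherent, so unbounded objects such as $\bigoplus_{n \ge 0} \cO_X[n]$ are allowed. Your finite-Postnikov induction therefore proves the equivalence only on $\Cohb$. To treat all of $\Coh$ one must pass to limits and colimits along Postnikov towers --- the paper uses hypercompleteness to write $\cF \simeq \lim_n \tau_{\le n}\cF$ on the sheaf side and $M \simeq \colim_n \tau_{\ge n} M$ on the module side --- and must verify that both functors are compatible with these; for $(-)\an$ that compatibility again rests on the flatness lemma above.
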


\begin{thm}[Projection formula, cf.\ \cref{thm:projection_formula}]
	Let $p\colon X\to Y$ be a morphism of derived \kanal stacks.
	Assume that $p$ is proper and has finite coherent cohomological dimension (cf.\ \cref{def:coherent_cohomological_dimension}).
	Let $\cF\in\Coh^+(X)$ and $\cG\in\Coh^+(Y)$, where $\Coh^+(-)$ denotes the \infcat of bounded below coherent sheaves.
	Then the canonical map
	\[ \eta_{\cF, \cG} \colon p_*( \cF ) \otimes_{\cO_Y} \cG \longrightarrow p_*( \cF \otimes_{\cO_X} p^*(\cG) ) \]
	is an equivalence.
\end{thm}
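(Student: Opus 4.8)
The plan is to reduce to the case where $Y$ is a derived $k$-affinoid space and then to run a dévissage in the variable $\cG$, keeping $\cF$ fixed. First I would record that $\eta_{\cF,\cG}$ is natural in $\cG$ and that both its source $\cG\mapsto p_*(\cF)\otimes_{\cO_Y}\cG$ and its target $\cG\mapsto p_*(\cF\otimes_{\cO_X}p^*(\cG))$ are exact functors of $\cG$, being composites of the exact functors $p^*$, $p_*$ and the two tensor products. Whether $\eta_{\cF,\cG}$ is an equivalence can then be tested locally on $Y$: pullback $p^*$ and the tensor products obviously commute with restriction to an admissible affinoid open $U\subseteq Y$, and $p_*$ does so by flat base change along the open immersion $U\hookrightarrow Y$ (an elementary instance that does not require the full proper base change theorem). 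Thus I may assume $Y=\Spec A$ is derived $k$-affinoid, so that by \cref{thm:derived_Kiehl} the global sections functor identifies $\Coh^+(Y)$ with the \infcat of bounded below coherent $A$-modules. At this point properness of $p$ ensures $p_*(\cF)\in\Coh^+(Y)$, and \cref{def:coherent_cohomological_dimension} ensures that $p_*$ lowers connectivity by at most a fixed integer on coherent sheaves.

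For the dévissage I would let $\cC\subseteq\Coh^+(Y)$ be the full subcategory of sheaves $\cG$ for which $\eta_{\cF,\cG}$ is an equivalence. Since $p^*(\cO_Y)\simeq\cO_X$, both sides reduce to $p_*(\cF)$ when $\cG=\cO_Y$ and $\eta_{\cF,\cO_Y}$ is the identity, so $\cO_Y\in\cC$. Naturality of $\eta$ together with exactness of both functors makes $\cC$ closed under shifts, retracts and cofibers (two-out-of-three in the induced cofiber sequence of maps); hence $\cC$ contains every finite free module $\cO_Y^{\oplus n}$ and, in turn, all of $\Perf(Y)$.

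The main obstacle is the passage from $\Perf(Y)$ to arbitrary bounded below coherent sheaves, and this is exactly where the finiteness hypotheses are used. After a shift I may assume $\cG$ is connective. Since $A$ is a Noetherian affinoid algebra, $\cG$ is almost perfect, so for every $m$ there is a perfect complex $P_m$ and a map $P_m\to\cG$ whose fiber is $m$-connective. Because $\cF$ is bounded below and $p_*$ has finite coherent cohomological dimension, there is a constant $c$, independent of $m$, such that applying either functor to $\fib(P_m\to\cG)$ produces an $(m-c)$-connective object; consequently the two comparison squares show that $\eta_{\cF,\cG}$ agrees with $\eta_{\cF,P_m}$ on $\pi_j$ for all $j\le m-c-1$. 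As $\eta_{\cF,P_m}$ is an equivalence for perfect $P_m$ and $m$ is arbitrary, $\eta_{\cF,\cG}$ induces an isomorphism on every homotopy group, hence is an equivalence. This gives $\cC=\Coh^+(Y)$ and finishes the argument. I expect the connectivity bookkeeping in this last step — tracking the constant $c$ through $p^*$, the tensor product with $\cF$, and $p_*$ — to be the only genuinely delicate point; everything else is formal dévissage once the reduction to the affinoid base and \cref{thm:derived_Kiehl} are in place.
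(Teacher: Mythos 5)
Your proposal is correct, but its key mechanism differs from the paper's. Both arguments localize to a derived $k$-affinoid base $Y$ and both ultimately reduce to the tautological case of free modules, but the reductions are different. The paper uses \cref{cor:derived_affinoid_presentation_of_coherent} to write $\cG$ as a geometric realization of a simplicial object with finite free terms, and then shows that all four functors involved commute with the relevant geometric realizations; this rests on \cref{cor:geometric_realization_almost_perfect} and, crucially, on \cref{lem:proper_commutes_with_filtered}, a standalone lemma (proved via descent spectral sequences and the quasi-compact-site argument of \cref{lem:qcompact_site}) asserting that $p_*$ commutes with suitable filtered colimits of coherent sheaves. You instead run a Postnikov-style approximation: using \cref{thm:derived_Kiehl} to identify $\Coh^+(Y)$ with bounded below coherent (equivalently, shifted almost perfect) $A$-modules over the Noetherian derived affinoid algebra $A$, you approximate a connective $\cG$ by perfect complexes $P_m \to \cG$ with $m$-connective fiber, and use the finite coherent cohomological dimension of \cref{def:coherent_cohomological_dimension} to get a uniform constant $c$ so that $\eta_{\cF,\cG}$ agrees with $\eta_{\cF,P_m}$ on $\pi_j$ for $j < m-c$; letting $m \to \infty$ and invoking Whitehead for bounded below objects finishes the proof. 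Your connectivity bookkeeping checks out: if $\cF \in \Coh^{\ge -b}(X)$, $p_*(\cF) \in \Coh^{\ge -a}(Y)$ and $p$ has coherent cohomological dimension less than $n$, then $c = \max(a, b+n)$ works, using right $t$-exactness of $p^*$ and of the tensor products. What each route buys: yours is more self-contained, avoiding the site-theoretic filtered-colimit machinery entirely, at the cost of explicit constant-tracking; the paper's route isolates the commutation of $p_*$ with filtered colimits as a reusable tool of independent interest and makes the final reduction purely formal. Two cosmetic caveats: the localization step should be phrased as restriction along smooth maps from derived affinoids in the site $(\dAfd_{k/Y})_{\bPsm}$ (where it holds essentially by construction of $p_*$) rather than along ``admissible affinoid opens'', since $Y$ is a stack; and $p_*(\cF) \in \Coh^+(Y)$ uses properness \emph{together with} finite coherent cohomological dimension (the unnamed lemma following \cref{def:coherent_cohomological_dimension}), not properness alone. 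Neither affects the substance of your argument.
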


\begin{thm} [Proper base change, cf.\ \cref{thm:proper_base_change}] \label{thm-intro:proper_base_change}
	Let
	\[ \begin{tikzcd}
	X' \arrow{r}{u} \arrow{d}{g} & X \arrow{d}{f} \\
	Y' \arrow{r}{v} & Y
	\end{tikzcd} \]
	be a pullback square of derived \kanal stacks.
	Assume that $f$ is proper and has finite coherent cohomological dimension.
	Then for any $\cF \in \Coh^+(X)$, the natural morphism
	\[ v^* f_*(\cF) \to g_* u^*(\cF) \]
	is an equivalence.
\end{thm}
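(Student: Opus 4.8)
The plan is to show that the base change transformation $\beta_\cF\colon v^* f_*(\cF)\to g_* u^*(\cF)$ is an equivalence by reducing to an affinoid base and then comparing global sections, using derived Tate acyclicity (\cref{thm:derived_Kiehl}) to pass between sheaves and modules, and feeding the projection formula (\cref{thm:projection_formula}) into the computation. First I observe that $\beta_\cF$ is a morphism in $\Coh^+(Y')$: indeed $f_*\cF\in\Coh^+(Y)$ by properness of $f$, hence $v^* f_*\cF\in\Coh^+(Y')$, while $u^*\cF\in\Coh^+(X')$ and $g$ is again proper of finite coherent cohomological dimension, so $g_* u^*\cF\in\Coh^+(Y')$. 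Being an equivalence is therefore local and can be tested after restriction to an admissible affinoid cover of $Y'$. Choosing admissible affinoid covers of $Y$ and of $Y'$, refined so that each chart of $Y'$ maps into a chart of $Y$, and using that pullbacks commute with restriction to admissible opens while $f_*$ and $g_*$ do so by the easy flat base change for open immersions, I reduce to the case where $Y=\Spec A$ and $Y'=\Spec B$ are derived $k$-affinoid, with $v$ induced by a map $A\to B$ of simplicial commutative $k$-affinoid algebras. Properness and finite coherent cohomological dimension of $f$ and $g$ persist under this restriction.

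In this affinoid situation the global section functor is an equivalence $\Coh^+(Y')\xrightarrow{\ \sim\ }\Coh^+(B)$ by \cref{thm:derived_Kiehl}, so it suffices to check that $\beta_\cF$ is an equivalence after applying $\Gamma(Y',-)$. On the source I have $\Gamma(Y', v^* f_*\cF)\simeq \Gamma(X,\cF)\otimes_A B$, where the derived tensor product of $A$-modules is unambiguous, requiring no completion, because $f_*\cF$ is a bounded-below coherent $A$-module. On the target I exploit that $u\colon X'\to X$ is an \emph{affine} morphism, being the base change of the map of affinoids $v$. Affineness of $v$ (hence of $u$) gives the base change identification $u_*\cO_{X'}\simeq f^* v_*\cO_{Y'}$ together with the projection formula for affine morphisms $u_* u^*\cF\simeq \cF\otimes_{\cO_X} u_*\cO_{X'}$, both of which are local computations on affinoid subdomains of $X$ governed by the completed tensor product. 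Combining these,
\[ \Gamma(X', u^*\cF)\simeq \Gamma(X, u_* u^*\cF)\simeq \Gamma\bigl(X, \cF\otimes_{\cO_X} f^* v_*\cO_{Y'}\bigr). \]

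To finish I would apply the projection formula for the proper morphism $f$ to $\cG=v_*\cO_{Y'}$, and then the monoidal form of derived Tate acyclicity on the affinoid $Y$:
\[ \Gamma\bigl(X, \cF\otimes_{\cO_X} f^* v_*\cO_{Y'}\bigr)\simeq \Gamma\bigl(Y, f_*\cF\otimes_{\cO_Y} v_*\cO_{Y'}\bigr)\simeq \Gamma(Y,f_*\cF)\otimes_A B\simeq \Gamma(X,\cF)\otimes_A B. \]
This matches $\Gamma(Y', v^* f_*\cF)$, and a diagram chase identifies the resulting composite equivalence with $\beta_\cF$ on global sections; descent then glues the affinoid charts back to the general statement.

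The hard part, and the one genuine obstacle, is the step $\Gamma(X,\cF\otimes f^* v_*\cO_{Y'})\simeq\Gamma(Y,f_*\cF\otimes v_*\cO_{Y'})$: the sheaf $v_*\cO_{Y'}$ is in general \emph{not} coherent — already when $v$ is the inclusion of an affinoid subdomain $B$ fails to be finite over $A$ — so \cref{thm:projection_formula}, stated for $\cG\in\Coh^+(Y)$, does not apply verbatim. I foresee two routes. The first extends the projection formula from $\Coh^+$ to bounded-below quasi-coherent sheaves by writing $\cG$ as a filtered colimit of coherent sheaves and invoking that the finite coherent cohomological dimension of $f$ forces $f_*$ to commute with filtered colimits; the delicate point is the compatibility of such colimits with the completed tensor products intrinsic to analytic geometry. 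The second factors $v$ as a closed immersion $Y'\hookrightarrow\mathbb{D}^n_Y$ into a relative polydisc followed by the flat projection $\mathbb{D}^n_Y\to Y$, handles the closed immersion by the coherent projection formula (where $v_*\cO_{Y'}$ is genuinely coherent) and the projection by flat base change, then composes the two base change squares; the cost here is establishing flat base change along the smooth morphism as a separate input. In either route the real work is checking that every tensor product and global-section identification respects the analytic completed structure, which is exactly where the archimedean intuition must be replaced by the Tate-acyclicity machinery.
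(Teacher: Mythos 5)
Your reduction to affinoid $Y$, $Y'$ and the idea of testing everything on global sections via \cref{thm:derived_Kiehl} match the overall strategy of the paper, but the computation of the target side rests on an identification that is false in analytic geometry: $u_*\cO_{X'}\simeq f^*v_*\cO_{Y'}$. The pullback $f^*$ is the sheaf-theoretic one, $f^{-1}(-)\otimes_{f^{-1}\cO_Y}\cO_X$, an \emph{algebraic} relative tensor product, whereas sections of $u_*\cO_{X'}$ over an affinoid $U \subseteq X$ are governed by the \emph{completed} tensor product $\cO(U)\,\widehat{\otimes}_A\, B$. Already for $Y=\Sp k$, $Y'=\Sp k\langle S\rangle$ and $U=\Sp k\langle T\rangle$, the natural map $k\langle T\rangle\otimes_k k\langle S\rangle\to k\langle T,S\rangle$ is a strict inclusion, and étale sheafification does not complete it; since your justification of the identification (``local computations on affinoid subdomains'') never uses properness of $f$, this counterexample applies and the natural map $f^*v_*\cO_{Y'}\to u_*\cO_{X'}$ is not an equivalence. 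This is exactly the difficulty the paper flags in its introduction --- pullbacks of derived affinoid spaces are not given by algebraic tensor products --- and the global-sections statement your chain is designed to output, $\Gamma(X',u^*\cF)\simeq\Gamma(X,\cF)\otimes_A B$, cannot be fed into the argument as an auxiliary step: it \emph{is} the theorem in the affinoid case, and it is true only because properness forces all cohomologies to be finite $A$-modules, for which completed and algebraic tensor products agree. A secondary gap: you assert that $g$ inherits finite coherent cohomological dimension for arbitrary $v$, but the paper establishes this only when $v$ is finite (\cref{lem:finite_coherent_cohomological_dimension_pullback}).

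Neither of your proposed repairs rescues the route as written. Route (a) dies at the first step: $v_*\cO_{Y'}$ is \emph{not} a filtered colimit of coherent subsheaves, because that colimit computes the uncompleted tensor $B\otimes_A\cO(V)$ sectionwise --- the same completion gap again. Route (b) is viable, but it is really a different proof and should be restructured: once you have base change along closed immersions and along the polydisc projections $\bbD^n_Y\to Y$, you should compose the two pullback squares directly, with no projection-formula detour at all; the closed immersion case is the paper's \cref{lem:proper_base_change_along_finite_maps}, while the polydisc case is genuinely new work, requiring the flatness of $A\to A\langle T_1,\ldots,T_n\rangle$ (\cref{prop:relative_disk_flat_II}) together with a hypercovering and spectral sequence argument parallel to \cref{lem:proper_base_change_along_finite_maps}. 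The paper avoids non-finite base change altogether: it first proves the case of finite $v$, where \cref{prop:pullback_finite_map} makes the pullback algebraic and \v{C}ech descent yields \cref{lem:proper_base_change_along_finite_maps}, and then reduces the general case to $Y'=\Sp K$ with $K$ a finite extension of $k$, by exploiting coherence of the fiber of the comparison map and evaluating at closed points --- a sequence of reductions that never leaves the finite world.
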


The proof of \cref{thm-intro:proper_base_change} is more involved than its counterpart in the algebraic setting, because pullbacks of derived affinoid spaces are not simply given by algebraic tensor products.
Our proof is achieved in two main steps: we first prove the case where $v$ is a finite map.
In this case, the algebra of functions on $X'$ can be computed via an algebraic tensor product (cf.\ \cref{prop:pullback_finite_map}), so we obtain the result by the algebraic version of the proper base change theorem together with a \v{C}ech descent argument.
Next, we deduce the general statement from this special case via a series of reduction steps.

With the preparations above, we introduce the plus pushforward functor on perfect complexes in \cref{sec:plus_pushforward}.
This enables us to construct the global analytic cotangent complex of the mapping functor in \cref{thm:Map_intro}.
We achieve the proof of the theorem in \cref{sec:mapping_stack}.

The main motivation of our work comes from non-archimedean enumerative geometry.
We refer to the introduction of \cite{Porta_Yu_DNAnG_I} for more details.
In our subsequent works, we will apply the derived mapping stacks to obtain non-archimedean analytic Gromov-Witten invariants.

\bigskip
\paragraph{\textbf{Notations and terminology}}

We refer to \cite{Porta_Yu_DNAnG_I} for the framework of derived non-archimedean analytic geometry, and to \cite{Porta_Yu_Representability} for the theories of modules, square-zero extensions, and cotangent complexes in derived analytic geometry.
We give a quick review of the basic definitions in \cref{sec:review}.

Throughout the paper, $k$ denotes a complete non-archimedean field with nontrivial valuation.
We denote by $\Ank$ the category of rigid \kanal spaces, and by $\dAnk$ the \infcat of derived \kanal spaces.
We denote by $\Afd_k$ the category of rigid $k$-affinoid spaces, and by $\dAfd_k$ the \infcat of derived $k$-affinoid spaces.

For $n\in\Z_{\ge 0}$, we denote by $\bbA^n_k$ the algebraic $n$-dimensional affine space over $k$, by $\bA^n_k$ the analytic $n$-dimensional affine space over $k$, and by $\bD^n_k$ the $n$-dimensional closed unit polydisk over $k$.

We denote by $\cS$ the \infcat of spaces.
An \infsite $(\cC,\tau)$ consists of a small \infcat $\cC$ equipped with a Grothendieck topology $\tau$.
For an \infsite $(\cC,\tau)$ and a presentable infinity category $\cD$, we denote by $\PSh_\cD(\cC)$ the \infcat of $\cD$-valued presheaves on $\cC$, and by $\Sh_\cD(\cC,\tau)$ the \infcat of $\cD$-valued sheaves on the \infsite $(\cC,\tau)$.
We will refer to $\cS$-valued presheaves (resp.\ sheaves) simply as presheaves (resp.\ sheaves), and denote $\PSh(\cC) \coloneqq \PSh_\cS(\cC)$, $\Sh(\cC,\tau) \coloneqq \Sh_\cS(\cC,\tau)$.

We denote by $\DAb$ the derived \infcat of abelian groups, and by $\CRing_k$ the \infcat of simplicial commutative $k$-algebras.
Given an \inftopos $\cX$, we denote by $\CRing_k(\cX)$ the \infcat of sheaves of simplicial commutative $k$-algebras over $\cX$.

A stack over an \infsite $(\cC,\tau)$ is by definition a hypercomplete sheaf with values in $\cS$ over the \infsite (cf.\ \cite[\S 2]{Porta_Yu_Higher_analytic_stacks_2014}).
We denote by $\St(\cC,\tau)$ the \infcat of stacks over $(\cC,\tau)$.

Throughout the paper, we use homological indexing convention, i.e.\ the differential in chain complexes lowers the degree by 1.

\bigskip
\paragraph{\textbf{Acknowledgments}}

We are very grateful to Antoine Chambert-Loir, Maxim Kontsevich, Jacob Lurie, Tony Pantev, Marco Robalo, Carlos Simpson, Bertrand To\"en and Gabriele Vezzosi for valuable discussions.
The authors would also like to thank each other for the joint effort.
Various stages of this research received supports from the Clay Mathematics Institute, Simons Foundation grant number 347070, and from the Ky Fan and Yu-Fen Fan Membership Fund and the S.-S.\ Chern Endowment Fund of the Institute for Advanced Study.

\section{Review of basic definitions} \label{sec:review}

In this section, we give a quick review of the basic definitions in derived non-archimedean analytic geometry following \cite{Porta_Yu_DNAnG_I,Porta_Yu_Representability}.
We refer to the introduction of \cite{Porta_Yu_DNAnG_I} for an heuristic explanation of the ideas behind the abstract definitions.

The notion of derived non-archimedean analytic space is based on the theory of pregeometry and structured topos introduced by Lurie \cite{DAG-V}.

\begin{defin}[{\cite[3.1.1]{DAG-V}}]
	A \emph{pregeometry} is an \infcat $\cT$ equipped with a class of \emph{admissible} morphisms and a Grothendieck topology generated by admissible morphisms, satisfying the following conditions:
	\begin{enumerate}[(i)]
		\item The \infcat $\cT$ admits finite products.
		\item The pullback of an admissible morphism along any morphism exists.
		\item The class of admissible morphisms is closed under composition, pullback and retract.
		Moreover, for morphisms $f$ and $g$, if $g$ and $g\circ f$ are admissible, then $f$ is admissible.
	\end{enumerate}
\end{defin}

\begin{defin}[{\cite[3.1.4]{DAG-V}}] \label{def:structure}
	Let $\cT$ be a pregeometry, and let $\cX$ be an \inftopos.
	A \emph{$\cT$-structure} on $\cX$ is a functor $\cO\colon\cT\to\cX$ with the following properties:
	\begin{enumerate}[(i)]
		\item The functor $\cO$ preserves finite products.
		\item Given any pullback diagram
		\[
		\begin{tikzcd}
		U' \arrow{r} \arrow{d} & U \arrow{d}{f} \\
		X' \arrow{r} & X
		\end{tikzcd}
		\]
		in $\cT$, where $f$ is admissible, the induced diagram
		\[
		\begin{tikzcd}
		\cO(U') \arrow{r} \arrow{d} & \cO(U) \arrow{d} \\
		\cO(X') \arrow{r} & \cO(X)
		\end{tikzcd}
		\]
		is a pullback square in $\cX$.
		\item Let $\{U_\alpha\to X\}$ be a covering in $\cT$ consisting of admissible morphisms.
		Then the induced map
		\[\coprod_\alpha\cO(U_\alpha)\to\cO(X)\]
		is an effective epimorphism in $\cX$.
	\end{enumerate}
	A morphism of $\cT$-structures $\cO\to\cO'$ on $\cX$ is \emph{local} if for every admissible morphism $U\to X$ in $\cT$, the resulting diagram
	\[ \begin{tikzcd}
	\cO(U) \arrow{r} \arrow{d} & \cO'(U) \arrow{d} \\
	\cO(X) \arrow{r} & \cO'(X)
	\end{tikzcd} \]
	is a pullback square in $\cX$.
	We denote by $\Strloc_\cT(\cX)$ the \infcat of $\cT$-structures on $\cX$ with local morphisms.
	
	A \emph{$\cT$-structured \inftopos} $X$ is a pair $(\cX,\cO_X)$ consisting of an \inftopos $\cX$ and a $\cT$-structure $\cO_X$ on $\cX$.
	We denote by $\RTop(\cT)$ the \infcat of $\cT$-structured \inftopoi (cf.\ \cite[Definition 1.4.8]{DAG-V}).
	Note that a 1-morphism $f\colon (\cX, \cO_X) \to (\cY, \cO_Y)$ in $\RTop(\cT)$ consists of a geometric morphism of \inftopoi $f_*\colon\cX\rightleftarrows\cY\colon f\inv$ together with a local morphism of $\cT$-structures $f^\sharp \colon f\inv \cO_Y \to \cO_X$.
\end{defin}

We fix $k$ a complete non-archimedean field with nontrivial valuation.
Two important pregeometries in the theory of derived non-archimedean analytic geometry are $\cTank$ and $\cTdisck$, which we recall below:

The pregeometry $\cTank$ is defined as follows:
\begin{enumerate}[(i)]
	\item The underlying category of $\cTank$ is the category of smooth rigid $k$-analytic spaces;
	\item A morphism in $\cTank$ is admissible if and only if it is étale;
	\item The topology on $\cTank$ is the étale topology.
\end{enumerate}

The pregeometry $\cTdisck$ is defined as follows:
\begin{enumerate}[(i)]
	\item The underlying category of $\cTdisck$ is the full subcategory of the category of $k$-schemes spanned by affine spaces $\mathbb A^n_k$;
	\item A morphism in $\cTdisck$ is admissible if and only if it is an isomorphism;
	\item The topology on $\cTdisck$ is the trivial topology, i.e.\ a collection of admissible morphisms is a covering if and only if it is nonempty.
\end{enumerate}

We have a natural functor $\cTdisck \to \cTank$ induced by analytification.
Composing with this functor, we obtain an ``algebraization'' functor
\[
(-)^\mathrm{alg} \colon \Strloc_{\cTank}(\cX) \to \Strloc_{\cTdisck}(\cX).
\]
As in \cite[Example 3.1.6, Remark 4.1.2]{DAG-V}, we have an equivalence induced by evaluation on the affine line
\[\Strloc_{\cTdisck}(\cX) \xrightarrow{\ \sim\ } \Sh_{\CRing_k}(\cX),\]
where $\Sh_{\CRing_k}(\cX)$ denotes the \infcat of sheaves on $\cX$ with values in the \infcat of simplicial commutative $k$-rings.

\begin{defin}[cf.\ {\cite[Definition 2.5]{Porta_Yu_DNAnG_I}}] \label{def:derived_analytic_space}
	A \emph{derived \kanal space} $X$ is a $\cTank$-structured \inftopos $(\cX,\cO_X)$ such that $\cX$ is hypercomplete and there exists an effective epimorphism from $\coprod_i U_i$ to a final object of $\cX$ satisfying the following conditions, for every index $i$:
	\begin{enumerate}[(i)]
		\item The pair $(\cX_{/U_i}, \pi_0(\cO\alg_X |_{U_i}))$ is equivalent to the ringed \inftopos associated to the étale site of a rigid \kanal space $X_i$.
		\item For each $j\ge 0$, $\pi_j(\cO\alg_X |_{U_i})$ is a coherent sheaf of $\pi_0(\cO\alg_X |_{U_i})$-modules on $X_i$.
	\end{enumerate}
	We denote by $\dAnk$ the full subcategory of $\RTop(\cTank)$ spanned by derived \kanal spaces.
\end{defin}

\begin{defin}[cf.\ {\cite[Definition 7.3]{Porta_Yu_DNAnG_I}}]
	A \emph{derived $k$-affinoid space} is a derived $k$-analytic space $(\cX, \cO_X)$ whose truncation $(\cX, \pi_0(\cO\alg_X))$ is a $k$-affinoid space.
	We denote by $\dAfdk$ the $\infty$-category of derived $k$-affinoid spaces.
\end{defin}

We refer to \cite[\S 2]{Porta_Yu_Higher_analytic_stacks_2014} for the notions of geometric context and geometric stack with respect to a given geometric context.
Recall that a geometric context $(\cC,\tau,\bP)$ consists of a small \infcat $\cC$ equipped with a Grothendieck topology $\tau$ and a class $\bP$ of morphisms in $\cC$, satisfying a short list of axioms.

A stack over an \infsite $(\cC,\tau)$ is by definition a hypercomplete sheaf with values in spaces over the \infsite.
We denote by $\St(\cC,\tau)$ the \infcat of stacks over $(\cC,\tau)$.

Given a geometric context $(\cC,\tau,\bP)$ and an integer $n\ge -1$, the notion of $n$-geometric stack is defined by induction on the geometric level $n$.
We refer to \cite[\S 2.3]{Porta_Yu_Higher_analytic_stacks_2014} for the details.
Let us simply recall that a $(-1)$-geometric stack is by definition a representable stack.

\begin{defin}
	Let $(\dAfd_k, \tauet, \bPsm)$ denote the geometric context consisting of the \infcat of derived $k$-affinoid spaces, the étale topology (cf.\ \cite[Definition 5.1]{Porta_Yu_DNAnG_I}) and the class of smooth morphisms (cf.\ \cite[Definition 5.45]{Porta_Yu_Representability}).
	A \emph{derived $k$-analytic stack} is an $n$-geometric stack with respect to the geometric context $(\dAfd_k, \tauet, \bPsm)$ for some $n$.
\end{defin}

Note that the notions of étaleness and smoothness of morphisms between derived \kanal spaces is local both on the source and on the target, so they extend naturally to all morphisms between derived \kanal stacks.

\begin{defin} \label{def:coherent_sheaf}
	Let $X$ be a derived \kanal stack.
	Let $(\dAfd_{k/X})_{\bPsm}$ denote the full subcategory of the overcategory $\St(\dAfd_k,\tauet)_{/X}$ spanned by smooth morphisms from derived $k$-affinoid spaces to $X$.
	Consider the functor
	\begin{align*}
	\cO_X\alg \colon (\dAfd_{k/X})_{\bPsm} &\longrightarrow \CRing_k\\
	(U \to X) &\longmapsto \Gamma(\cO_U\alg) ,
	\end{align*}
	where $\CRing_k$ is the $\infty$-category of simplicial commutative $k$-rings.
	
	Let $((\dAfd_{k/X})_{\bPsm},\tauet)$ denote the category $(\dAfd_{k/X})_{\bPsm}$ equipped with the étale topology.
	Let $\DAb$ denote the derived \infcat of abelian groups.
	Let $\Sh_{\DAb}((\dAfd_{k/X})_{\bPsm}, \tauet)^\wedge$ denote the \infcat of hypercomplete $\DAb$-valued sheaves on the site $((\dAfd_{k/X})_{\bPsm},\tauet)$.
	Then $\cO\alg_X$ is an element there.
	We define the stable \infcat $\cO_X \Mod$ of \emph{$\cO_X$-modules} to be the stable $\infty$-category of $\cO_X\alg$-modules in $\Sh_{\DAb}((\dAfd_{k/X})_{\bPsm}, \tauet)^\wedge$.
	The stable \infcat $\cO_X\Mod$ is naturally equipped with a t-structure (cf.\ \cite[2.1.3]{DAG-VIII}).
	We define the stable \infcat $\Coh(X)$ of \emph{coherent sheaves on $X$} to be the full subcategory of $\cO_X\Mod$ spanned by $\cF\in\cO_X\Mod$ such that $\pi_i(\cF)$ is a coherent sheaf of $\pi_0(\cO_X\alg)$-modules for every $i$.
	For every $n \in \mathbb Z$, we set
	\begin{gather*}
	\Coh^{\ge n}(X) \coloneqq \Coh(X) \cap \cO_X \Mod^{\ge n}, \quad \Coh^{\le n}(X) \coloneqq \Coh(X) \cap \cO_X \Mod^{\le n} , \\
	\Coh^+(X) \coloneqq \Coh(X) \cap \cO_X \Mod^+, \quad \Coh^-(X) \coloneqq \Coh(X) \cap \cO_X \Mod^- ,\\
	\Cohb(X)\coloneqq\Coh^+(X)\cap\Coh^-(X).
	\end{gather*}
\end{defin}

\section{Derived Tate acyclicity} \label{sec:acyclicity}

In this section, we extend Tate acyclicity theorem to derived affinoid spaces.

\begin{thm} \label{thm:derived_Tate_acyclicity}
	Let $X = (\cX, \cO_X)$ be a derived $k$-affinoid space.
	Let $A \coloneqq \Gamma(\cO_X\alg)$, which is an $\mathbb E_\infty$-ring spectrum over $k$.
	Then $A$ is connective and simplicial.
	Moreover, the global section functor
	\[ \Gamma \colon \cO_X \Mod \longrightarrow A \Mod \]
	restricts to a $t$-exact functor
	\[ \Coh(X) \xrightarrow{\ \sim\ }  \Coh(A). \]
\end{thm}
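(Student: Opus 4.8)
The plan is to reduce the entire statement to the classical Tate acyclicity theorem (Kiehl's theorem) for the underlying $k$-affinoid space $X_0 \coloneqq (\cX, \pi_0(\cO_X\alg))$, using the descent spectral sequence that computes the homotopy groups of global sections from the sheaf cohomology of the homotopy sheaves. By \cref{def:derived_analytic_space}, the sheaf $\cO_X\alg$ is connective and each $\pi_t(\cO_X\alg)$ is a coherent sheaf on the classical $k$-affinoid space $X_0$; classical Tate acyclicity then asserts that these coherent sheaves have no higher cohomology on $X_0$. This vanishing is the single input that drives every part of the proof.

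First I would treat the ring $A$. Viewing $\cO_X\alg$ as a sheaf of simplicial commutative $k$-algebras (under the equivalence $\Strloc_{\cTdisck}(\cX) \simeq \Sh_{\CRing_k}(\cX)$) and taking global sections inside $\CRing_k$, the object $A$ is by construction simplicial, and its underlying spectrum agrees with $\Gamma(\cO_X\alg)$ computed in $\DAb$ because the forgetful functor $\CRing_k \to \DAb$ preserves limits. The hypercohomology spectral sequence
\[ E_2^{s,t} = \rH^s\big(\cX, \pi_t(\cO_X\alg)\big) \Longrightarrow \pi_{t-s}(A) \]
collapses onto the line $s = 0$ by Tate acyclicity, yielding $\pi_n(A) \cong \rH^0(X_0, \pi_n(\cO_X\alg))$. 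Since $\cO_X\alg$ is connective this gives $\pi_n(A) = 0$ for $n < 0$, so $A$ is connective, and $\pi_0(A)$ is the classical affinoid algebra $\Gamma(\pi_0(\cO_X\alg))$, which is Noetherian. Applying the same spectral sequence to an arbitrary $\cF \in \Coh(X)$ (whose homotopy sheaves $\pi_t(\cF)$ are coherent on $X_0$) gives $\pi_n(\Gamma(\cF)) \cong \Gamma(\pi_n(\cF))$ together with the vanishing of all higher terms. This immediately shows that $\Gamma$ is $t$-exact on $\Coh(X)$ and that $\Gamma(\cF) \in \Coh(A)$, since $\pi_n(\Gamma(\cF)) = \Gamma(\pi_n(\cF))$ is a finitely generated $\pi_0(A)$-module by classical Kiehl, hence coherent as $\pi_0(A)$ is Noetherian. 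On hearts, $\Gamma$ restricts to the classical equivalence $\Cohh(X) \simeq \Cohh(A)$ between coherent sheaves on $X_0$ and finitely generated $\pi_0(A)$-modules, with inverse the localization functor $M \mapsto \widetilde M$.

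It then remains to upgrade the equivalence on hearts to all of $\Coh(X)$. I would first do this on bounded objects $\Cohb(X)$ by induction on the $t$-amplitude: using the truncation fibre sequences $(\pi_n\cF)[n] \to \tau_{\le n}\cF \to \tau_{\le n-1}\cF$, the $t$-exactness of $\Gamma$, the equivalence on hearts, and the five lemma applied to the long exact sequences of mapping spaces, one shows $\Gamma$ is fully faithful on $\Cohb$; essential surjectivity follows by realizing any bounded coherent $A$-module as an iterated extension of shifted finitely generated $\pi_0(A)$-modules, each of which lies in the image.

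The hard part will be passing to the unbounded categories $\Coh^+$ and $\Coh^-$. Here I would use that the $t$-structures on $\cO_X\Mod$ (hypercomplete, hence complete) and on $A\Mod$ are complete: one writes a bounded-below $\cF$ as the limit of its Postnikov truncations $\tau_{\le n}\cF$, preserved by $\Gamma$ since $\Gamma$ is a right adjoint, and writes a bounded-above $\cF$ as the filtered colimit $\colim_n \tau_{\ge -n}\cF$, preserved by $\Gamma$ because $t$-exactness lets one compute it degreewise, where the colimit stabilizes. Verifying these completeness and continuity properties, and that $\Coh(X)$ and $\Coh(A)$ are stable under the relevant limits and colimits, is the main technical obstacle; the purely numerical content is entirely supplied by the collapse of the spectral sequence above.
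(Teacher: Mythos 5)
The first half of your proposal---connectivity and simpliciality of $A$, and the fact that $\Gamma$ restricts to a $t$-exact functor $\Coh(X) \to \Coh(A)$ landing in coherent $A$-modules---is correct and is essentially the paper's own proof of this theorem: the same descent spectral sequence $E_2^{i,j} = \pi_i(\Gamma(\pi_j \cF)) \Rightarrow \pi_{i+j}(\Gamma \cF)$, collapsing onto one line by classical Tate acyclicity applied to the coherent homotopy sheaves on $\trunc(X)$. One small circularity in your phrasing: sections of $\cO_X\alg$ computed in $\CRing_k$ have underlying object the \emph{connective cover} of $\Gamma(\cO_X\alg)$ computed in $\DAb$ (the inclusion of connective objects is a left adjoint and does not preserve limits), so you must first obtain connectivity from the spectral sequence and only then conclude that $A$ is simplicial; this is the order in which the paper argues. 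Note also that the paper's own proof of this statement stops there: despite the ``$\sim$'' over the displayed arrow, the equivalence is established only later, in \cref{thm:derived_Kiehl}, and by a different method.

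Your route to the equivalence, however, has a genuine gap. $t$-exactness together with an equivalence on hearts does \emph{not} imply fully faithfulness on bounded objects, so the five-lemma d\'evissage does not get off the ground. To run it you need, as base case, that $\Gamma$ induces equivalences of mapping \emph{spectra}---isomorphisms on all $\Ext^i$, not just $\Ext^0$---between heart objects; these Ext groups are computed in $\cO_X\Mod$ and $A\Mod$, so they see the full derived structure of $\cO_X\alg$ and of $A$, not only $\pi_0$, and the heart equivalence gives no control over them. A standard counterexample to the implication you invoke is the Eilenberg--MacLane functor $D^b(\Z) \to \mathrm{Sp}$: it is $t$-exact and an equivalence on hearts, yet $\Map_{\mathrm{Sp}}(\rH\F_2, \rH\F_2)$ is the Steenrod algebra while $\Ext^i_{\Z}(\F_2,\F_2)$ vanishes for $i \ge 2$. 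To repair the argument you must prove the Ext comparison directly---for instance, resolve $\cF \in \Coh^+(X)$ by a simplicial object of finite free $\cO_X$-modules (cf.\ \cref{cor:derived_affinoid_presentation_of_coherent} and \cite[7.2.4.11]{Lurie_Higher_algebra}), so that both mapping spectra are identified with the same limit $\lim_{\mathbf \Delta} \Gamma(\cG)^{\oplus r_\bullet}$---or else follow the paper's strategy in \cref{thm:derived_Kiehl}: construct an explicit candidate inverse $c(M)(U) = \cO_X(U)\otimes_A M$, check that it is a hypercomplete sheaf valued in $\Coh(X)$ (this uses the flatness of \'etale maps on global sections, \cref{lem:global_section_etale_is_flat}, together with Kiehl's theorem and Conrad's descent), and then verify that the unit and counit are equivalences on homotopy sheaves using $t$-exactness. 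Either way an input beyond the heart equivalence is required; your essential-surjectivity step and the passage to $\Coh^{+}$, $\Coh^{-}$ and unbounded objects are fine once fully faithfulness is in place.
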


\begin{proof}
	Let $\cF \in \cO_X \Mod$.
	Since $X$ is a derived \kanal space, the $\infty$-topos $\cX$ is by definition hypercomplete.
	As a consequence, \cite[Proposition 5.3]{Porta_Yu_Higher_analytic_stacks_2014} allows us to write
	\[ \cF \simeq \lim_{n \ge 0} \tau_{\le n} \cF \in \Sh_{\DAb}(\cX), \]
	where $\DAb$ denotes the derived \infcat of abelian groups.
	Since the global section functor $\Gamma$ commutes with limits, we obtain
	\[ \Gamma(\cF) \simeq \lim_{n \ge 0} \Gamma(\tau_{\le n} \cF) \in \DAb . \]
	The dual of \cite[1.2.2.14]{Lurie_Higher_algebra} yields a spectral sequence
	\begin{equation} \label{eq:spectral_sequence}
		E_2^{i,j} \coloneqq \pi_i \left( \Gamma(\pi_j(\cF)) \right) \Longrightarrow \pi_{i+j}\left( \Gamma(\cF) \right) .
	\end{equation}

	Assume now that $\cF \in \Coh(X)$.
	Then the sheaves $\pi_j(\cF)$ are coherent sheaves over the underived \kanal space $\trunc(X) = (\cX, \pi_0(\cO\alg_X))$.
	In particular, the Tate acyclicity theorem implies that
	\[ \pi_i \left( \Gamma(\cF) \right) = 0 \]
	for every $i \ne 0$.
	This guarantees at the same time the convergence of the spectral sequence \eqref{eq:spectral_sequence} and the degeneration at the second page.

	Applying this to $\cF = \cO_X\alg$, we deduce that $A = \Gamma(\cO_X\alg)$ is connective, and therefore simplicial.
	In particular, we can use \cite[Proposition 2.1.3]{DAG-VIII} in order to endow $A\Mod$ with a $t$-structure.
	Furthermore, for a general $\cF \in \Coh(X)$, we can deduce from the Tate acyclicity theorem that $\Gamma(\pi_j(\cF))$ is finitely generated as $\pi_0(A)$-module.
	In turn, this implies that $\Gamma(\cF) \in \Coh(A)$.
	In other words, $\Gamma$ restricts to a functor of stable $\infty$-categories
	\[ \Gamma \colon \Coh(X) \to \Coh(A) . \]
	We are left to prove that this functor is $t$-exact.
	In order to see this, it is enough to check that for any $\cF \in \Cohh(X)$, we have $\Gamma(\cF) \in \Cohh(A)$.
	Once again, this follows from the spectral sequence \eqref{eq:spectral_sequence} and the Tate acyclicity theorem.
\end{proof}

\begin{lem} \label{lem:global_section_etale_is_flat}
	Let $X = (\cX, \cO_X)$ and $Y = (\cY, \cO_Y)$ be two derived $k$-affinoid spaces.
	Let $A \coloneqq \Gamma(\cO_X\alg)$ and $B \coloneqq \Gamma(\cO_Y\alg)$.
	They are simplicial commutative $k$-algebras by \cref{thm:derived_Tate_acyclicity}.
	If $f \colon Y \to X$ is an étale morphism, then the induced morphism $\varphi \colon A \to B$ is flat.
\end{lem}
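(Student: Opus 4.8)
The plan is to verify the homotopy-group criterion for flatness of simplicial commutative rings: the map $\varphi\colon A\to B$ is flat if and only if $\pi_0(\varphi)\colon\pi_0(A)\to\pi_0(B)$ is flat in the classical sense and, for every $n\ge 0$, the natural map
\[ \pi_n(A)\otimes_{\pi_0(A)}\pi_0(B)\longrightarrow\pi_n(B) \]
is an isomorphism. The first step is to compute both sides using \cref{thm:derived_Tate_acyclicity}. Since $\Gamma$ is $t$-exact on coherent sheaves, the degenerate spectral sequence there gives $\pi_n(A)\simeq\Gamma(\pi_n(\cO_X\alg))$ and $\pi_n(B)\simeq\Gamma(\pi_n(\cO_Y\alg))$, which are finitely generated modules over $\pi_0(A)$ and $\pi_0(B)$ respectively; here $\pi_0(A)$ and $\pi_0(B)$ are the classical $k$-affinoid algebras of the truncations $\trunc(X)$ and $\trunc(Y)$.

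For the $\pi_0$-statement, I would observe that the truncation $\trunc(f)\colon\trunc(Y)\to\trunc(X)$ is an étale morphism of classical $k$-affinoid spaces, and that the induced map on affinoid algebras is precisely $\pi_0(\varphi)$. Étale morphisms of $k$-affinoid spaces are flat by the classical rigid analytic theory, which settles this part.

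For the higher homotopy groups, I would unwind the definition of étaleness: the underlying geometric morphism is étale, so $\cY\simeq\cX_{/U}$ for some object $U\in\cX$, and the structure map $f^\sharp$ identifies $\cO_Y\alg$ with the restriction $\cO_X\alg|_U$. Since restriction along an étale geometric morphism is $t$-exact, this yields $\pi_n(\cO_Y\alg)\simeq\pi_n(\cO_X\alg)|_U\simeq\trunc(f)^*\pi_n(\cO_X\alg)$, the last identification because pullback of coherent sheaves along the flat map $\trunc(f)$ agrees with étale restriction. Taking global sections and invoking flat base change for coherent sheaves on affinoids—a consequence of Tate acyclicity, under which coherent sheaves correspond to finite modules and $\Gamma$ is exact—gives
\[ \pi_n(B)\simeq\Gamma\bigl(\trunc(f)^*\pi_n(\cO_X\alg)\bigr)\simeq\pi_n(A)\otimes_{\pi_0(A)}\pi_0(B), \]
which is exactly the required isomorphism.

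The main obstacle is the middle identification $\pi_n(\cO_Y\alg)\simeq\trunc(f)^*\pi_n(\cO_X\alg)$: it rests on extracting from the definition of an étale morphism of derived $k$-analytic spaces that the structure sheaf is genuinely pulled back (equivalently, restricted along the slice topos $\cX_{/U}$), together with the compatibility between étale restriction and the coherent-sheaf pullback $\trunc(f)^*$ on the classical truncations. Once this is in place, the remaining ingredients—$t$-exactness of $\Gamma$ from \cref{thm:derived_Tate_acyclicity} and flatness of classical étale affinoid maps—assemble the flatness criterion directly.
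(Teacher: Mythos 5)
Your proposal is correct and follows essentially the same route as the paper: the flat $=$ strong $+$ $\pi_0$-flat criterion, the identification $\pi_n(A)\simeq\Gamma(\pi_n(\cO_X\alg))$ via the degenerate spectral sequence of \cref{thm:derived_Tate_acyclicity}, flatness of $\pi_0(\varphi)$ from étaleness of the truncation, and the identification $\pi_n(\cO_Y\alg)\simeq f\inv\pi_n(\cO_X\alg)$ (your slice-topos restriction is exactly the paper's ``$f\inv\cO_X\to\cO_Y$ is an isomorphism'' plus $t$-exactness of $f\inv$). The step you flag as the main obstacle---matching étale restriction with the coherent pullback $\trunc(f)^*$ and computing its global sections as a base change---is precisely what the paper resolves by citing Kiehl's theorem together with Conrad's fpqc descent for coherent sheaves, so your argument closes the same way.
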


\begin{proof}
	Let us prove first that $\pi_0(\varphi) \colon \pi_0(A) \to \pi_0(B)$ is flat.
	We can identify the rings $\pi_0(A)$ and $\pi_0(B)$ respectively with the rings of global sections of the $k$-affinoid spaces $\trunc(X)$ and $\trunc(Y)$.
	Since $f$ is étale, its truncation $\trunc(f) \colon \trunc(Y) \to \trunc(X)$ is étale as well.
	In particular, it is flat.
	Therefore, $\pi_0(\varphi) \colon \pi_0(A) \to \pi_0(B)$ is also flat.
	
	Let us now prove that $\varphi$ is strong.
	For this, we first observe that \cref{thm:derived_Tate_acyclicity} implies that $\pi_i(A) \simeq \Gamma(\pi_i(\cO\alg_X))$.
	Since $f$ is étale, the canonical morphism $f\inv \cO_X \to \cO_Y$ is an isomorphism.
	As $f\inv$ is $t$-exact, \cref{thm:derived_Tate_acyclicity} provides the following chain of equivalences:
	\[ \Gamma(f\inv(\pi_i(\cO\alg_X))) \simeq \Gamma(\pi_i(f\inv \cO\alg_X )) \simeq \Gamma(\pi_i(\cO\alg_Y)) \simeq \pi_i(B) . \]
	Since $\pi_i(\cO\alg_X)$ is a coherent sheaf on $\trunc(X)$, fpqc descent \cite{Conrad_Descent_for_coherent_2003} and Kiehl's theorem for coherent modules \cite[\S 9.4.3]{Bosch_Non-archimedean_1984} imply together that
	\[ \pi_i(A) \otimes_{\pi_0(A)} \pi_0(B) \simeq \Gamma(f\inv(\pi_i(\cO\alg_X))) \simeq \pi_i(B) . \]
	Therefore, $\varphi$ is strong, and the proof is achieved.
\end{proof}

\begin{thm} \label{thm:derived_Kiehl}
	Let $X = (\cX, \cO_X)$ be a derived $k$-affinoid space.
	Let $A \coloneqq \Gamma(\cO_X\alg)$, which is a simplicial commutative $k$-algebra by \cref{thm:derived_Tate_acyclicity}.
	Then the global section functor 
	\[ \Gamma \colon \cO_X \Mod \longrightarrow A \Mod \]
	restricts to a t-exact equivalence of stable \infcats:
	\[ \Coh(X) \xrightarrow{\ \sim\ }  \Coh(A).\]
\end{thm}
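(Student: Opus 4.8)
The plan is to produce an explicit quasi-inverse to $\Gamma$ from the global-sections adjunction and then to transport the classical Tate--Kiehl theorem along the $t$-structure. Note first that \cref{thm:derived_Tate_acyclicity} already supplies all of the statement except the word ``equivalence'': it gives that $\Gamma$ is $t$-exact and factors through $\Coh(A)$. Under the identifications of $\Cohh(X)$ with coherent sheaves on the underived affinoid $\trunc(X)$ and of $\Cohh(A)$ with finitely generated $\pi_0(A)$-modules, the induced functor on hearts $\Gamma^\heartsuit\colon \Cohh(X)\to\Cohh(A)$ is precisely the classical equivalence of Tate and Kiehl. Write $L^\heartsuit$ for its inverse. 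The task is thus to upgrade this equivalence of abelian categories to an equivalence of stable \infcats.

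Next I would construct a left adjoint to $\Gamma$. The global-sections functor $\Gamma\colon\Sh_{\DAb}(\cX)\to\DAb$ is the pushforward $a_*$ along the terminal geometric morphism $a\colon\cX\to\ast$, whose pullback $a^\ast$ is the constant-sheaf functor; passing to modules, $\Gamma\colon\cO_X\Mod\to A\Mod$ acquires the left adjoint $L(M)\simeq a^\ast(M)\otimes_{a^\ast(A)}\cO_X\alg$. The key point is that $L$ is $t$-exact on coherent objects. Indeed, $a^\ast$ is $t$-exact, being a pullback along a geometric morphism (exact on abelian sheaves), and the base change along $a^\ast(A)\to\cO_X\alg$ is flat: \cref{lem:global_section_etale_is_flat} guarantees that on each étale affinoid chart $U\to X$ the algebra $\Gamma(\cO_U\alg)$ is flat over $A$, so $\cO_X\alg$ is flat over $a^\ast(A)$ on a generating cover and the tensor product introduces no higher $\Tor$. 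Hence $L$ restricts to a $t$-exact functor $\Coh(A)\to\Coh(X)$ whose restriction to hearts is $L^\heartsuit$.

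Finally I would deduce that the unit $M\to\Gamma L(M)$ and the counit $L\Gamma(\cF)\to\cF$ are equivalences. Since $\Gamma$ and $L$ are exact functors of stable \infcats and are $t$-exact, both commute with the formation of homotopy objects: for every $n$ one has $\pi_n(L\Gamma\,\cF)\simeq L^\heartsuit\Gamma^\heartsuit(\pi_n\cF)\simeq\pi_n\cF$, the isomorphism being induced by the counit, and symmetrically for the unit. Thus the unit and counit induce isomorphisms on every homotopy object and are therefore equivalences on all of $\Coh(X)$ and $\Coh(A)$ respectively---no boundedness hypothesis is needed. This exhibits $\Gamma$ and $L$ as mutually inverse equivalences, and $t$-exactness has already been recorded in \cref{thm:derived_Tate_acyclicity}.

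The main obstacle is the $t$-exactness of $L$ asserted in the second step: one must show that passing from a finitely generated $A$-module to its associated sheaf of $\cO_X$-modules creates no higher homotopy. This is the derived avatar of the flatness of affinoid localizations, which is exactly why \cref{lem:global_section_etale_is_flat} is proved beforehand; together with the vanishing of higher cohomology from \cref{thm:derived_Tate_acyclicity} it is what forces the counit to be an equivalence already on the heart. Once $L$ is known to be $t$-exact, the passage to unbounded coherent complexes is automatic via the homotopy-object computation above.
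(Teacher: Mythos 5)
Your proof follows the same skeleton as the paper's: both construct the candidate inverse as the tensor functor $M \mapsto \cO_X \otimes_A M$, both lean on \cref{lem:global_section_etale_is_flat}, and both finish by reducing to the classical Tate--Kiehl equivalence on the heart via $t$-exactness. But there is a genuine gap at the central step, namely the claim that flatness alone makes $L(M) = a^*(M) \otimes_{a^*(A)} \cO_X\alg$ well behaved. The sheaf-level tensor product defining $L$ is the (hypercomplete) \emph{sheafification} of the presheaf $U \mapsto \cO_X(U) \otimes_A M$. \Cref{lem:global_section_etale_is_flat} gives that this presheaf is objectwise discrete when $M$ is discrete (no higher Tor, sectionwise), but flatness says nothing about whether the presheaf satisfies étale (hyper)descent. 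Without descent, sheafification can change sections: there is then no way to compute $\Gamma(L(M))$, so the unit $M \to \Gamma(L(M))$ need not be an equivalence, $\pi_0 L(M)$ need not be coherent, and your identification of the restriction of $L$ to hearts with the classical Kiehl inverse $L^\heartsuit$ is unjustified. Your final homotopy-object computation $\pi_n(\Gamma L M) \simeq \Gamma^\heartsuit L^\heartsuit(\pi_n M) \simeq \pi_n M$ silently uses exactly this identification, so the whole argument rests on a step that flatness cannot deliver.

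This is precisely where the paper spends most of its proof: it shows that the presheaf $c(M)(U) = \cO_X(U) \otimes_A M$ is \emph{already} a hypercomplete sheaf, so that no sheafification intervenes and $\Gamma(c(M)) = A \otimes_A M = M$ holds tautologically. That verification is not formal: for discrete $M$ it uses Kiehl's theorem for coherent modules together with Conrad's fpqc-descent theorem (to know that the classical coherent sheaf attached to $M$ has the expected sections over étale and smooth affinoids and glues along étale hypercovers), and for general, possibly unbounded, $M \in \Coh(A)$ it requires writing $M \simeq \colim_n \tau_{\ge n} M$ and running a spectral-sequence argument to show degeneration of the descent spectral sequence for a hypercovering $U^\bullet$. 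To repair your proof you would need to insert this descent verification (after which your adjoint-functor packaging works and is essentially equivalent to the paper's argument); citing \cref{lem:global_section_etale_is_flat} and \cref{thm:derived_Tate_acyclicity} in its place does not suffice, because those results control Tor and global cohomology on $X$ itself, not gluing along étale hypercovers of the site.
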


\begin{proof}
	We already know from \cref{thm:derived_Tate_acyclicity} that $\Gamma$ restricts to a $t$-exact functor of stable \infcats
	\[ \Coh(X) \longrightarrow \Coh(A) . \]
	Let us construct an inverse functor.
	Consider the $\infty$-functor
	\[ (\dAfd_{k/X})_{\bPsm}\op \times \Coh(A) \to \DAb \]
	defined by
	\[ (U, M) \mapsto \cO_X(U) \otimes_A M . \]
	This $\infty$-functor induces
	\[ c \colon \Coh(A) \to \PSh_{\DAb}((\dAfd_{k/X})_{\bPsm}) . \]
	We claim that $c$ factors through $\Sh_{\DAb}((\dAfd_{k/X})_{\bPsm},\tauet)^\wedge$.
	In other words, we have to prove that for every $M \in \Coh(A)$, the presheaf $c(M)$ defined by
	\[ c(M)(U) \coloneqq \cO_X(U) \otimes_A M \]
	is a hypercomplete sheaf.
	
	Observe that when $M$ is discrete, \cref{lem:global_section_etale_is_flat} implies that $\cO_X(U) \otimes_A M$ is again discrete, and hence it is equivalent to
	\[ \Tor^{\pi_0(A)}_0( \pi_0(\cO_X(U)), M ) . \]
	Therefore, Kiehl's theorem for coherent modules \cite[\S 9.4.3]{Bosch_Non-archimedean_1984} and the fpqc descent theorem by Conrad \cite{Conrad_Descent_for_coherent_2003} imply that $c(M)$ is hypercomplete.
	Finally, let $M \in \Coh(A)$ and write
	\[ M \simeq \colim_n \tau_{\ge n} M . \]
	Since $c$ is $t$-exact, we have
	\[ c(M) \simeq \colim_n c(\tau_{\ge n} M) . \]
	Let $U^\bullet$ be a hypercovering of $U$ and consider the map
	\[ c(M)(U) \to \lim_{\mathbf \Delta} c(M)(U^\bullet) . \]
	Using \cref{lem:global_section_etale_is_flat} we see that the complexes $E^{n\bullet}_1$ forming the $E_1$-page of the spectral sequence associated to the limit above are the complexes associated (via the Dold-Kan correspondence) to the cosimplicial object $c(\rH^n(M))(U^\bullet)$.
	Therefore, by \cite[\S 9.4.3]{Bosch_Non-archimedean_1984} and \cite{Conrad_Descent_for_coherent_2003} again, we see that the spectral sequence degenerates at the $E_2$-page, yielding
	\[ c(\rH^n(M))(U) \simeq \rH^n\left( \lim_{\mathbf \Delta} c(M)(U^\bullet) \right) . \]
	As \cref{lem:global_section_etale_is_flat} implies that
	\[ c(\rH^n(M))(U) \simeq \rH^n(c(M)(U)) , \]
	we conclude that $c(M)$ is hypercomplete.
	
	Finally, we observe that $c$ can be promoted to an $\infty$-functor
	\[ \tilde{c} \colon \Coh(A) \to \cO_X \Mod . \]
	\Cref{lem:global_section_etale_is_flat} implies that $\rH^i( \tilde{c}(M) ) \simeq \tilde{c}( \rH^i(M) )$.
	Therefore, the classical Kiehl's theorem implies that $\tilde{c}$ factors through $\Coh(X)$.
	Using the $t$-exactness of $\tilde{c}$ and of $\Gamma$, we can now prove that they form an equivalence of stable $\infty$-categories.
\end{proof}

\begin{cor} \label{cor:geometric_realization_almost_perfect}
	Let $X$ be a derived \kanal stack.
	Then $\Coh^{\ge 0}(X)$ admits geometric realizations and the inclusion $\Coh^{\ge 0}(X) \hookrightarrow \cO_X \Mod$ preserves them.
\end{cor}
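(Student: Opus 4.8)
The plan is to reduce the statement to a local computation on derived $k$-affinoid charts, where \cref{thm:derived_Kiehl} converts it into a purely algebraic assertion about connective coherent modules. First I would recall that $\cO_X\Mod$ is presentable, hence cocomplete, so that geometric realizations exist in $\cO_X\Mod$. It therefore suffices to prove that the geometric realization, computed in $\cO_X\Mod$, of any simplicial object $\cF_\bullet$ of $\Coh^{\ge 0}(X)$ again lies in $\Coh^{\ge 0}(X)$: granting this, both the existence of geometric realizations in $\Coh^{\ge 0}(X)$ and their preservation by the inclusion follow at once.

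The connectivity of $\lvert\cF_\bullet\rvert$ is formal. For the $t$-structure of \cref{def:coherent_sheaf}, the full subcategory $\cO_X\Mod^{\ge 0}$ is the left orthogonal of $\cO_X\Mod^{\le -1}$, and a left orthogonal complement is closed under all colimits that exist in the ambient category; in particular $\lvert\cF_\bullet\rvert\in\cO_X\Mod^{\ge 0}$.

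It then remains to check coherence, i.e.\ that $\pi_i(\lvert\cF_\bullet\rvert)$ is a coherent sheaf of $\pi_0(\cO_X\alg)$-modules for every $i$. Since coherence is a condition that can be tested on the smooth site, I would verify it after restriction along a smooth morphism $u\colon U\to X$ with $U\in\dAfdk$ ranging over an atlas of $X$. The pullback $u^*$ is a left adjoint, hence commutes with geometric realizations, so $u^*\lvert\cF_\bullet\rvert\simeq\lvert u^*\cF_\bullet\rvert$, and $u^*\cF_\bullet$ is a simplicial object of $\Coh^{\ge 0}(U)$. Writing $A\coloneqq\Gamma(\cO_U\alg)$, \cref{thm:derived_Kiehl} furnishes a $t$-exact equivalence $\Coh^{\ge 0}(U)\simeq\Coh^{\ge 0}(A)$ that intertwines geometric realizations and matches homotopy sheaves with homotopy groups. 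The problem thus becomes the algebraic one: the geometric realization of a simplicial object of connective coherent $A$-modules is again connective coherent.

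This last algebraic fact is the only nonformal input, and it is the step I expect to be the main obstacle. Here I would use that $\pi_0(A)$ is a $k$-affinoid algebra, hence noetherian, while the $\pi_i(A)$ are finitely generated over $\pi_0(A)$ by \cref{thm:derived_Kiehl}; consequently connective coherent $A$-modules coincide with connective almost perfect $A$-modules. The class of connective almost perfect modules over a connective ring is closed under geometric realizations (cf.\ \cite[\S 7.2.4]{Lurie_Higher_algebra}), which yields $\lvert u^*\cF_\bullet\rvert\in\Coh^{\ge 0}(A)$. Transporting back, $\pi_i(\lvert\cF_\bullet\rvert)$ restricts to a coherent sheaf on every chart $U$, and therefore $\lvert\cF_\bullet\rvert\in\Coh^{\ge 0}(X)$, completing the argument.
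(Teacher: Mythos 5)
Your overall skeleton (reduce to showing that the realization computed in $\cO_X\Mod$ of a simplicial object of $\Coh^{\ge 0}(X)$ again lies in $\Coh^{\ge 0}(X)$, handle connectivity formally, localize to a derived affinoid $U$, and invoke \cref{thm:derived_Kiehl} together with \cite[7.2.4.11]{Lurie_Higher_algebra}) is close to the paper's, but there is a genuine gap at the step where you assert that the Kiehl equivalence ``intertwines geometric realizations.'' The equivalence $\Gamma \colon \Coh^{\ge 0}(U) \xrightarrow{\ \sim\ } \Coh^{\ge 0}(A)$ of \cref{thm:derived_Kiehl} intertwines colimits \emph{computed inside these two subcategories}; it says nothing about the realization you actually need to control, namely $\lvert u^*\cF_\bullet \rvert$ computed in the ambient category $\cO_U\Mod$. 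In $\cO_U\Mod$ a geometric realization is the sheafification of the presheaf-level realization, and the global sections functor $\Gamma \colon \cO_U\Mod \to A\Mod$ is a right adjoint, so there is no a priori reason for $\Gamma\bigl(\lvert u^*\cF_\bullet\rvert_{\cO_U\Mod}\bigr)$ to agree with $\lvert \Gamma(u^*\cF_\bullet)\rvert_{A\Mod}$. What your argument does prove is that geometric realizations \emph{exist} in $\Coh^{\ge 0}(U)$, transported from $\Coh^{\ge 0}(A)$; the assertion that these coincide with realizations computed in $\cO_U\Mod$ --- equivalently, that the inclusion $\Coh^{\ge 0}(U)\hookrightarrow\cO_U\Mod$ preserves them --- is precisely the nontrivial content of the corollary, so as written your last step is circular.

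The paper closes exactly this gap with a presheaf argument: it considers the conservative functor $\Coh^{\ge 0}(X) \to \PSh_{\DAb}\bigl((\dAfd_{k/X})_{\bPsm}\bigr)$ and checks, by applying \cref{thm:derived_Kiehl} to \emph{every} derived affinoid $V$ of the site (not only to the chart $U$) together with the closure of connective almost perfect modules under geometric realizations \cite[7.2.4.11(4)]{Lurie_Higher_algebra}, that this functor commutes with geometric realizations. Consequently the presheaf-level realization of $\cF_\bullet$ agrees section-wise with the underlying presheaf of the realization formed in $\Coh^{\ge 0}(X)$; in particular it is already a sheaf, so no sheafification correction occurs and it is also the realization in $\cO_X\Mod$. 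Your proof becomes correct once this bridging step is inserted (or replaced by a substitute, e.g.\ a convergence argument for the simplicial spectral sequence in $\cO_U\Mod$ identifying each $\pi_i\bigl(\lvert u^*\cF_\bullet\rvert\bigr)$ with the homology of the complex of coherent homotopy sheaves, which is coherent since coherent sheaves form a weak Serre subcategory of the heart); without it, the identification of the two realizations is unjustified.
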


\begin{proof}
	By definition, $\cO_X\Mod$ satisfies étale descent.
	Since the étale topology and the smooth topology are equivalent, $\cO_X\Mod$ also satisfies smooth descent.
	By \cite{Conrad_Descent_for_coherent_2003}, $\Coh^{\ge 0}(X)$ is closed under smooth descent in $\cO_X\Mod$.
	So $\Coh^{\ge 0}(X)$ satisfies smooth descent as well.
	Therefore, it suffices to prove the corollary locally.
	In other words, we can assume $X$ to be derived affinoid.
	In this case, the existence of geometric realizations in $\Coh^{\ge 0}(X)$ is a consequence of \cref{thm:derived_Kiehl} and \cite[7.2.4.11(4)]{Lurie_Higher_algebra}.
	
	Consider now the site $\big((\dAfd_{k/X})_{\bP_\mathrm{sm}},
	\tauet\big)$ as in \cref{def:coherent_sheaf}.
	We have a conservative functor
	\[ \Coh^{\ge 0}(X) \to \PSh_{\DAb}( (\dAfd_{/X})_{\bP_\mathrm{sm}} ) . \]
	Using again \cref{thm:derived_Kiehl} and \cite[7.2.4.11(4)]{Lurie_Higher_algebra}, we see that this functor commutes with geometric realizations.
	In particular, if $\cF^\bullet$ is a simplicial object in $\Coh^{\ge 0}(X)$ and let $\cF$ denote its geometric realization in $\PSh_{\DAb}( (\dAfd_{/X})_{\bP_\mathrm{sm}} )$, then $\cF$ is a sheaf for the étale topology.
	This implies that $\cF$ is a also geometric realization of $\cF^\bullet$ in $\cO_X \Mod$, thus completing the proof.
\end{proof}

\begin{cor} \label{cor:derived_affinoid_presentation_of_coherent}
	Let $X$ be a derived $k$-affinoid space.
	Let $\cF \in \Coh^+(X)$.
	Then there exists a simplicial object $\cP^\bullet \in \Fun(\mathbf \Delta\op, \Coh^+(X))$ such that each $\cP^n$ is free of finite rank and
	\[ |\cP^\bullet| \simeq \cF \]
	in $\Coh^+(X)$.
\end{cor}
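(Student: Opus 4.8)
The plan is to transport the problem to modules over the simplicial commutative algebra $A \coloneqq \Gamma(\cO_X\alg)$ by means of the derived Kiehl theorem, and then to invoke the standard fact that connective almost perfect modules admit simplicial resolutions by finite free modules.

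First I would use the $t$-exact equivalence $\Gamma \colon \Coh(X) \xrightarrow{\ \sim\ } \Coh(A)$ of \cref{thm:derived_Kiehl}. Under this equivalence the free $\cO_X$-module $\cO_X^{\oplus r}$ corresponds to the free module $A^{\oplus r}$, and by \cref{cor:geometric_realization_almost_perfect} the geometric realizations of simplicial objects in $\Coh^{\ge 0}$ are computed inside $\cO_X\Mod$, so they may be transported back and forth through the equivalence. Hence it suffices to produce, for $M \coloneqq \Gamma(\cF)$, a simplicial object of finite free $A$-modules whose geometric realization is $M$. Since $\cF$ is bounded below, after a shift it suffices to treat the connective case.

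Next I would build the resolution by the usual inductive procedure. Since $\pi_0(A)$ is the affinoid algebra of the truncation $\trunc(X)$, it is Noetherian, and the coherence and connectivity of $M$ say precisely that $M$ is a connective almost perfect $A$-module. One chooses finitely many generators of $\pi_0(M)$ to obtain a map $\cP^0 \coloneqq A^{\oplus r_0} \to M$ which is surjective on $\pi_0$, replaces $M$ by the fiber of this map, which is again connective and almost perfect with its bottom homotopy group raised, and iterates, using at each stage that finite generation is preserved because $\pi_0(A)$ is Noetherian. Assembling these choices into a simplicial diagram gives the desired $\cP^\bullet$ with $\lvert\cP^\bullet\rvert \simeq M$; this is exactly the content of \cite[7.2.4.11(4)]{Lurie_Higher_algebra}, which I would cite rather than redo the bookkeeping.

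The main point to watch is the compatibility of geometric realizations across the two reductions: one must know that the realization of $\cP^\bullet$ formed in $A\Mod$ lands in $\Coh(A)$ and agrees with the realization formed among connective coherent modules, and likewise that transporting $\cP^\bullet$ back through the equivalence of \cref{thm:derived_Kiehl} preserves its realization. Both are guaranteed by \cref{cor:geometric_realization_almost_perfect} together with the $t$-exactness of the Kiehl equivalence, so no genuinely new analytic input is needed beyond the results already established. I expect the only real obstacle to be this bookkeeping rather than any essential difficulty.
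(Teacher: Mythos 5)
Your proposal is correct and takes essentially the same route as the paper: the paper's entire proof is the single line that the corollary is ``a direct consequence of \cref{thm:derived_Kiehl} and \cite[7.2.4.11(5)]{Lurie_Higher_algebra}'', and your argument---transport $\cF$ through the $t$-exact Kiehl equivalence to a connective almost perfect module over $A = \Gamma(\cO_X\alg)$, invoke Lurie's free simplicial resolution, and transport back, using \cref{cor:geometric_realization_almost_perfect} to identify the realizations---is exactly that proof with the bookkeeping made explicit.

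Two caveats, neither of which separates your proof from the paper's. First, the resolution statement you want is part (5) of \cite[7.2.4.11]{Lurie_Higher_algebra}; part (4), which you cite, is the closure of connective almost perfect modules under geometric realizations, i.e.\ the ingredient of \cref{cor:geometric_realization_almost_perfect}. Second, your reduction ``after a shift it suffices to treat the connective case'' cannot be reversed: if $\cP^\bullet$ resolves $\cF[n]$ by finite free modules, then shifting the diagram down by $n$ destroys freeness of the terms. This is, however, a defect of the statement rather than of your proof: a geometric realization of connective objects is connective (the connective part of the $t$-structure is closed under colimits), so the corollary as literally stated can only hold for $\cF \in \Coh^{\ge 0}(X)$, which is also all that the paper's one-line proof establishes. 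This causes no harm downstream: in the sole application (the proof of \cref{thm:projection_formula}), all functors involved are exact, hence commute with shifts, so one may assume connectivity there.
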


\begin{proof}
	This is a direct consequence of \cref{thm:derived_Kiehl} and \cite[7.2.4.11(5)]{Lurie_Higher_algebra}.
\end{proof}

\section{Pullback along finite maps} \label{sec:pullback_finite_maps}

The goal of this section is to prove the following \cref{prop:pullback_finite_map}.

\begin{defin}
	A morphism $f \colon X \to Y$ of derived \kanal spaces is said to be \emph{finite} if its truncation $\trunc(f)$ is finite.
\end{defin}

\begin{prop} \label{prop:pullback_finite_map}
	Let
	\begin{equation} \label{eq:pullback_finite_map}
	\begin{tikzcd}
	X' \arrow{r}{u} \arrow{d}{g} & X \arrow{d}{f} \\
	Y' \arrow{r}{v} & Y
	\end{tikzcd}
	\end{equation}
	be a pullback square of derived $k$-affinoid spaces.
	Set $A \coloneqq \Gamma(\cO\alg_Y)$, $A' \coloneqq \Gamma(\cO\alg_{Y'})$, $B \coloneqq \Gamma(\cO\alg_X)$, and $B' \coloneqq \Gamma(\cO\alg_{X'})$.
	If $v$ is finite, then the canonical map
	\[ \eta \colon A' \otimes_A B \longrightarrow B' \]
	is an equivalence.
\end{prop}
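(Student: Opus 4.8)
The plan is to show that, for a finite $v$, the ring of functions on the analytic pullback $X'$ is forced to coincide with the \emph{algebraic} derived tensor product $A' \otimes_A B$, the only serious input being the classical fact that a finite module over an affinoid algebra is already complete, so that its completed and algebraic base changes agree. First I would record that finiteness is stable under base change, so $u \colon X' \to X$ is finite; hence $u_* \cO\alg_{X'}$ is a coherent sheaf of $\cO_X$-algebras, and as global sections factor as $\Gamma_{X'} \simeq \Gamma_X \circ u_*$ we get $\Gamma(u_* \cO\alg_{X'}) \simeq B'$. Dually, put $\cA \coloneqq v_* \cO\alg_{Y'}$; finiteness of $v$ makes this a coherent sheaf of $\cO_Y$-algebras, and \cref{thm:derived_Tate_acyclicity} identifies $\Gamma(\cA)$ with the connective coherent $A$-algebra $A'$. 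The task is thus to compare the two coherent $\cO_X$-algebras $u_* \cO\alg_{X'}$ and $f^* \cA$.

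The second ingredient is that $f^*$ of coherent sheaves computes the algebraic base change on global sections. Indeed, the inverse equivalence constructed in the proof of \cref{thm:derived_Kiehl} sends an $A$-module $M$ to the sheaf $U \mapsto \cO_X(U) \otimes_A M$, a formula visibly compatible with restriction along $f$; since a coherent sheaf has finite global sections, the algebraic tensor product already yields a coherent module and no analytic completion is needed. Applying this to $\cA$ gives $\Gamma(f^* \cA) \simeq A' \otimes_A B$, so it remains exactly to prove the base-change equivalence $u_* \cO\alg_{X'} \simeq f^* \cA$ for the finite morphism $v$, equivalently that $\eta \colon A' \otimes_A B \to B'$ is an equivalence. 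Because $\Gamma$ is an equivalence onto $\Coh(B)$ and is $t$-exact, I can test this on homotopy groups.

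The heart of the argument, and the step I expect to be the main obstacle, is that for a \emph{finite} sheaf of algebras $\cA$ the analytic tensor product silently computing $\cO\alg_{X'}$ coincides with the algebraic one. On $\pi_0$ this is classical: since $\trunc$ is right adjoint to the inclusion of ordinary into derived analytic spaces it preserves the fiber product, so $\trunc(X') \simeq \trunc(Y') \times_{\trunc(Y)} \trunc(X)$ is the classical rigid pullback, whose functions are $\pi_0(A') \,\widehat{\otimes}_{\pi_0(A)}\, \pi_0(B)$; the classical theorem that a finite module over an affinoid algebra is complete collapses this to $\pi_0(A') \otimes_{\pi_0(A)} \pi_0(B)$, which by $t$-exactness of $\Gamma$ (\cref{thm:derived_Tate_acyclicity}) is precisely $\pi_0(B') \simeq \pi_0(A' \otimes_A B)$. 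To reach the higher homotopy groups I would write $\cA$ (equivalently $A'$) as a Postnikov limit of its truncations, observe that finiteness over $\cO_Y$ is inherited by each stage and preserved under the algebraic base change, and use the $t$-exactness in \cref{thm:derived_Tate_acyclicity,thm:derived_Kiehl} to reduce each $\pi_i(\eta)$ to the same classical finiteness statement applied to the coherent homotopy sheaves $\pi_i(\cA)$. The persistent subtlety is that pullbacks of derived affinoid spaces are not algebraic tensor products in general; the proof is arranged so that this gap is only ever probed through the completed tensor product of a finite module, where it closes.
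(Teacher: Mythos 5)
Your preliminary reductions and your treatment of $\pi_0$ are sound: since truncation is right adjoint to the inclusion of underived into derived analytic spaces, $\trunc(X')$ is the classical rigid pullback, and finiteness of $\pi_0(A')$ over $\pi_0(A)$ collapses the completed tensor product to the algebraic one, so $\pi_0(\eta)$ is an isomorphism. But note that your first three steps are, as you yourself observe, only a reformulation (identifying $u_*\cO\alg_{X'}\simeq f^*v_*\cO\alg_{Y'}$ \emph{is} the proposition), so the whole burden falls on the higher homotopy groups, and there your argument has a genuine gap. The sketch --- write $A'$ as a Postnikov limit, note finiteness of each stage, and ``use $t$-exactness to reduce each $\pi_i(\eta)$ to the same classical finiteness statement'' --- never explains how to compute $\pi_i(\cO\alg_{X'})$ for $i>0$. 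That is the entire difficulty: $X'$ is defined by a universal property in derived \kanal spaces, and there is no a priori formula (algebraic or completed) for its structure sheaf in positive degrees; the classical fact that finite modules over affinoid algebras are complete is silent here, because in degrees $>0$ the issue is not one of completion at all. For your induction to close, you would need to know that forming the fiber product with $X$ over $Y$ carries the square-zero extension stage $\mathrm t_{\le m} Y' \to \mathrm t_{\le m+1} Y'$ to the corresponding \emph{algebraic} square-zero extension --- i.e.\ that pullbacks preserve the infinitesimal pushouts of \cite[Theorem 6.5]{Porta_Yu_Representability}. You neither state nor prove this, and it is precisely where the work lies.

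The paper's proof avoids any induction on $A'$ and instead factors the maps through closed embeddings into relative polydisks (\cite[Lemma 5.46]{Porta_Yu_Representability}), decomposing the square into three special cases: (i) both maps closed immersions (\cref{lem:global_section_pullback_double_closed_immersions}), where the analytic pullback genuinely is an algebraic pushout by \cite[Proposition 3.17]{Porta_Yu_DNAnG_I} together with topos-theoretic proper base change; (ii) pullback of a finite map along the projection $Y \times \bD^n_k \to Y$ (\cref{cor:pullback_finite_disk_projection}); and (iii) their combination (\cref{cor:pullback_closed_immersion_global_section}). Case (ii) is where the analogue of your missing step is actually carried out: it rests on the flatness of $A \to A\langle T_1,\ldots,T_n\rangle$ (\cref{prop:relative_disk_flat_II}), proven by a Postnikov induction on $X$ using the square-zero machinery (\cref{lem:product_preserves_infinitesimal_pushout,lem:relative_disk_flat_I,lem:pullback_split_square_zero_extension}), after which strongness (\cite[Lemma 5.45]{Porta_Yu_Representability}) transports the $\pi_0$ identification --- the only place your classical finiteness fact enters --- to all degrees. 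Some substitute for this flatness-plus-strongness mechanism is unavoidable; your proposal correctly names the subtlety but does not close it.
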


Let us first consider a special case:

\begin{lem} \label{lem:global_section_pullback_double_closed_immersions}
	Assume furthermore that in the square \eqref{eq:pullback_finite_map} the maps $f$ and $v$ are closed immersions.
	Then the square
	\[ \begin{tikzcd}
	\cO\alg_Y \arrow{r} \arrow{d} & v_* \cO\alg_{Y'} \arrow{d} \\
	f_* \cO\alg_X \arrow{r} & f_* u_* \cO\alg_{X'}
	\end{tikzcd} \]
	is a pushout square in $\CRing_k(\cY)$.
	Moreover, it remains a pushout square after taking global sections.
\end{lem}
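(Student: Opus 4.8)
The claim is local-to-global: we have a pullback square of derived $k$-affinoid spaces where $f$ and $v$ are closed immersions, and we must show that the induced square of structure sheaves

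$$\begin{tikzcd} \cO\alg_Y \arrow{r} \arrow{d} & v_* \cO\alg_{Y'} \arrow{d} \\ f_* \cO\alg_X \arrow{r} & f_* u_* \cO\alg_{X'} \end{tikzcd}$$

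is a pushout in $\CRing_k(\cY)$, and that this persists after applying $\Gamma$.

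Let me think about what each piece means and how to verify the pushout condition.\emph{Plan.} The assertion is that a certain square of sheaves of simplicial commutative $k$-algebras on $\cY$ is a pushout, and that it remains a pushout after taking global sections. Both closed immersions $f$ and $v$ are in particular finite, and for closed immersions the pushforward functors $f_*$, $v_*$, $u_*$ are exact and behave like the corresponding pushforwards in the underived setting on the level of the underlying topoi. The plan is to prove the pushout claim by checking it \emph{stalkwise}, or rather after pulling back to a suitable affinoid cover, reducing everything to a statement about rings; and to prove the persistence under $\Gamma$ using derived Tate acyclicity (\cref{thm:derived_Tate_acyclicity}), which tells us that $\Gamma$ on coherent sheaves computes the honest global sections with no higher cohomology.

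\emph{Step 1: the pushout in $\CRing_k(\cY)$.} First I would note that since $v\colon Y'\to Y$ is a closed immersion, $v_*\cO\alg_{Y'}$ is the pushforward of the structure sheaf along a closed subspace, and the map $\cO\alg_Y\to v_*\cO\alg_{Y'}$ is surjective on $\pi_0$ with kernel the ideal sheaf of $Y'$; similarly for $f$. Because the square \eqref{eq:pullback_finite_map} is a pullback of derived affinoids, $X'$ is computed as the fiber product, and for closed immersions the fiber product corresponds on structure sheaves to a relative tensor product. Concretely, I would verify that the canonical comparison map
\[ (f_*\cO\alg_X) \otimes_{\cO\alg_Y} (v_*\cO\alg_{Y'}) \longrightarrow f_* u_* \cO\alg_{X'} \]
is an equivalence. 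This is a local statement on $\cY$, so after restricting to the members of an étale (equivalently affinoid) cover it becomes a statement comparing, for each affinoid $U$, the algebra $\cO\alg_{X'}(U\times_Y X')$ with the derived tensor product $\cO\alg_X(U\times_Y X)\otimes_{\cO\alg_Y(U)}\cO\alg_{Y'}(U\times_Y Y')$. The key input is that for closed immersions the sheaf-level pushforward commutes with this base change, which follows from the compatibility of the analytic structure sheaf with closed immersions together with the fact that closed immersions of derived affinoids are classified by their ideal sheaves.

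\emph{Step 2: persistence under $\Gamma$.} Once the sheaf-level pushout is established, applying $\Gamma$ preserves it provided $\Gamma$ carries the relevant sheaves to the correct algebras with no higher-cohomology obstruction. Here I would invoke \cref{thm:derived_Tate_acyclicity}: the structure sheaves $\cO\alg_X$, $\cO\alg_{Y'}$, $\cO\alg_{X'}$ are (via the pushforwards along closed immersions, which preserve coherence) coherent, so their global sections are concentrated in degree zero and agree with the naive global sections of the affinoid algebras. Since $\Gamma$ is the global section functor and these closed immersions are finite—so that $\Gamma(f_*(-))\simeq \Gamma(-)$ and likewise for $v_*$, $u_*$—the square of global sections is precisely the square of affinoid algebras $A\to A'$, $B\to B'$ with the tensor product relation $A'\otimes_A B\simeq B'$ that \cref{prop:pullback_finite_map} ultimately asserts. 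Because $\Gamma$ is symmetric monoidal on coherent sheaves in this acyclic regime and sends the sheaf-level relative tensor product to the algebra-level relative tensor product, the pushout is preserved.

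\emph{Main obstacle.} The delicate point is Step 1, and specifically the commutation of the closed-immersion pushforward with the relative tensor product at the level of \emph{derived} structure sheaves: unlike the underived case, one cannot simply quote that $v_*$ is exact, since the tensor product is derived and the higher homotopy sheaves $\pi_i(\cO\alg)$ must be controlled. I expect the resolution to come from the coherence hypotheses built into \cref{def:derived_analytic_space}(ii) together with the fact that closed immersions are affine (so that $f_*$ is $t$-exact and preserves coherence), which lets one reduce the derived comparison to the underived pushout of affinoid algebras along closed immersions—where it is standard—and then propagate upward through the Postnikov tower.
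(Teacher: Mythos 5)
Your two-step skeleton (sheaf-level pushout first, persistence under $\Gamma$ second) matches the paper, and your Step~2 is essentially the paper's argument: $f_*\cO\alg_X$ and $v_*\cO\alg_{Y'}$ are coherent, so the $t$-exact equivalence $\Coh(Y)\simeq\Coh(A)$ of \cref{thm:derived_Kiehl} transports the pushout to global sections. The genuine gap is in Step~1, which is the actual content of the lemma. The fact that has to be invoked there is that \emph{derived analytic} fiber products along closed immersions are computed by \emph{algebraic} derived tensor products of structure sheaves: by \cite[Proposition 3.17]{Porta_Yu_DNAnG_I}, the underlying algebra $\cO\alg_{X'}$ is the pushout of $u\inv\cO\alg_X \leftarrow u\inv f\inv \cO\alg_Y \rightarrow g\inv\cO\alg_{Y'}$ in $\CRing_k(\cX')$. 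This is precisely the point where analytic geometry diverges from algebraic geometry (fiber products of affinoids are in general completed, not algebraic, tensor products), and it cannot be recovered from ``closed immersions are classified by their ideal sheaves'' or from the coherence conditions in \cref{def:derived_analytic_space}; your ``Main obstacle'' paragraph correctly flags the difficulty but only defers it to an unproven Postnikov-tower induction.

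Second, even granting that input, it lives on $\cX'$, whereas the square in the statement lives on $\cY$ and involves the pushforwards $f_*$, $v_*$, $f_*u_*$; you still need to know that pushing forward along these closed immersions carries the pushout upstairs to the pushout downstairs. The paper handles exactly this by checking the square on geometric points of the hypercomplete $\infty$-topos $\cY$: at a point not factoring through $v_*$ (resp.\ $f_*$), the stalks of $v_*\cO\alg_{Y'}$ and $f_*u_*\cO\alg_{X'}$ (resp.\ of $f_*\cO\alg_X$ and $f_*u_*\cO\alg_{X'}$) are the zero ring, which is absorbing for pushouts of simplicial commutative rings, so the square is trivially a pushout there; at a point factoring through both, proper base change for closed immersions of $\infty$-topoi \cite[7.3.2.13]{HTT} together with full faithfulness of $f_*$ and $v_*$ reduces the claim to the square on $\cX'$. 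Your substitute --- checking ``locally on an affinoid cover of $\cY$'' and comparing algebras of sections --- does not do this job: restricting to a cover leaves the statement literally unchanged (it is the same kind of square over each member of the cover); sections over $U$ of a sheaf-level pushout are not the pushout of the sections without an acyclicity argument; and the section-level identity $A'\otimes_A B \simeq B'$ you would be reducing to is \cref{prop:pullback_finite_map} (via \cref{cor:pullback_closed_immersion_global_section}), which in the paper is proved \emph{from} this lemma, so that reduction is circular.
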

\begin{proof}
	Denote by $\cX$ (resp.\ $\cX'$, $\cY$, $\cY'$) the underyling $\infty$-topos of $X$ (resp.\ $X'$, $Y$, $Y'$).
	By \cite[Proposition 3.5]{Porta_Yu_DNAnG_I}, $f$ and $v$ induce closed immersions of $\infty$-topoi
	\[ f_* \colon \cX \leftrightarrows \cY \colon f\inv, \qquad v_* \colon \cY' \leftrightarrows \cY \colon v\inv . \]
	Since $\cY$ is hypercomplete, we can check the first statement on the geometric points of $\cY$.
	Let $y_* \colon \cS \leftrightarrows \cY \colon y\inv$ be a geometric point of $\cY$.
	Notice that if $y_*$ does not factor through $v_*$, then $y\inv v_* \cO\alg_{Y'} \simeq v_* g_* \cO\alg_{X'} \simeq 0$.
	Similarly, if $y_*$ does not factor through $f_*$, then $y\inv f_* \cO\alg_X \simeq f_* u_* \cO\alg_{X'} \simeq 0$.
	In both cases, the statement is verified.
	We are left to consider the case where $y$ factors through both $v_*$ and $f_*$.
	Using the proper base change for closed morphism of $\infty$-topoi (see \cite[7.3.2.13]{HTT}) and the fully faithfulness of $f_*$ and $v_*$, we reduce ourselves to check that the square
	\[ \begin{tikzcd}
	u\inv f\inv \cO\alg_Y \arrow{r} \arrow{d} & v\inv \cO\alg_{Y'} \arrow{d} \\
	f\inv \cO\alg_X \arrow{r} & \cO\alg_{X'}
	\end{tikzcd} \]
	is a pushout square in $\CRing_k(\cX')$.
	This is true in virtue of \cite[Proposition 3.17]{Porta_Yu_DNAnG_I}.
	
	Observe furthermore that both $f_* \cO\alg_X$ and $v_* \cO\alg_{Y'}$ are in particular coherent sheaves over $Y$.
	So the second statement follows directly from the equivalence provided by \cref{thm:derived_Kiehl}.
\end{proof}

In order to deal with the general case, we will prove that for every derived $k$-affinoid space $X$, the canonical projection $X \times \bD^n_k \to X$ induces a flat map between the global sections.
Let us start with a couple of preliminaries.

Let $V$ be a derived $k$-affinoid space.
Let $\cF \in \Coh^{\ge 1}(V)$, $d \colon \anL_V \to \cF$ an analytic derivation, and $V_d[\cF]$ the associated square-zero extension (cf.\ \cite[\S 5]{Porta_Yu_Representability}).
By \cite[Theorem 6.5]{Porta_Yu_Representability}, we have a pushout square
\[ \begin{tikzcd}
V[\cF] \arrow{r}{\eta_d} \arrow{d}{\eta_0} & V \arrow{d} \\
V \arrow{r} & V_d[\cF]
\end{tikzcd} \]
in $\dAnk$, where $\eta_0$ corresponds to the zero derivation and $\eta_d$ corresponds to the derivation $d$.

\begin{lem} \label{lem:product_preserves_infinitesimal_pushout}
	For any derived \kanal space $X$, the square
	\[ \begin{tikzcd}
	V[\cF] \times X \arrow{r}{\eta_d \times \mathrm{id}_X} \arrow{d}{\eta_0 \times \mathrm{id}_X} & V \times X \arrow{d} \\
	V \times X \arrow{r} & V_d[\cF] \times X
	\end{tikzcd} \]
	is a pushout square in $\dAnk$.
\end{lem}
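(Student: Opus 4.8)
We need to show that taking the product with a fixed derived $k$-analytic space $X$ preserves the infinitesimal pushout square coming from a square-zero extension. The key structural fact available is that the original square (before multiplying by $X$) is a pushout in $\dAnk$, established in [Porta--Yu, Representability, Theorem 6.5]. So the question is really: when does $(-)\times X$ preserve this particular pushout?

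**The approach.** The plan is to reduce the claim to a local, algebraic computation on global sections, exploiting that square-zero extensions are controlled by the cotangent complex and are infinitesimal (i.e. identities on underlying topoi, modifying only the structure sheaf). First I would observe that the functor $(-)\times X$ does not preserve arbitrary colimits, so we cannot argue formally; instead we must use the special nature of square-zero extensions. The pushout defining $V_d[\cF]$ is an infinitesimal extension: the underlying $\infty$-topos of $V_d[\cF]$ agrees with that of $V$, and only the structure sheaf is deformed by $\cF$. Multiplying by $X$, the underlying $\infty$-topos of each corner becomes the product topos with $\cX$, and these product topoi agree across the top and bottom rows. Thus the content of the square being a pushout lives entirely in the structure sheaves, and can be checked after passing to an affinoid cover of both $V$ and $X$ and taking global sections.

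**Key steps in order.** (1) Reduce to the case where $X$ is derived $k$-affinoid, since pushouts and the square-zero construction are local on the source and the claim can be checked on an affinoid cover of $X$; here one uses that the étale topology glues these squares. (2) With $V$ and $X$ both affinoid, translate the square-zero pushout into the equivalent statement about simplicial commutative $k$-algebras via \cref{thm:derived_Kiehl}, identifying $\Gamma(\cO^{\mathrm{alg}}_{V_d[\cF]})$ with the square-zero algebra $\Gamma(\cO^{\mathrm{alg}}_V)\oplus_d\Gamma(\cF)[1]$ built from the analytic cotangent complex. (3) Use that $(-)\times X$ corresponds on global sections to (completed) tensoring with $\Gamma(\cO^{\mathrm{alg}}_X)$, and that tensoring is a left adjoint hence preserves the algebraic pushout; equivalently, that the analytic cotangent complex of $V\times X$ over $V_d[\cF]\times X$ is computed by base change from that of $V$ over $V_d[\cF]$. (4) Finally reassemble: since the resulting square of structure sheaves is a pushout over a common underlying topos, invoke \cite[Theorem 6.5]{Porta_Yu_Representability} again (run backwards) to conclude the geometric square is a pushout in $\dAnk$.

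**The main obstacle.** The hard part will be Step (3): controlling how the product $(-)\times X$ interacts with global sections and with the square-zero structure. Unlike the algebraic setting, the product of derived affinoids is \emph{not} given by a naive algebraic tensor product of global sections — this is precisely the subtlety flagged in the introduction regarding proper base change. So I cannot simply assert $\Gamma(\cO^{\mathrm{alg}}_{V\times X})\simeq\Gamma(\cO^{\mathrm{alg}}_V)\otimes_k\Gamma(\cO^{\mathrm{alg}}_X)$. The safe route is to avoid computing the product explicitly and instead argue on the level of the defining universal property: express the pushout square abstractly via the cotangent complex (the square-zero extension is characterized by a derivation $d\colon \anL_V\to\cF$), then show that multiplying by $X$ pulls this data back to a derivation $d\times\id\colon \anL_{V\times X}\to \mathrm{pr}_V^*\cF$ whose associated square-zero extension is exactly $V_d[\cF]\times X$. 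This requires knowing that the analytic cotangent complex satisfies a Künneth-type compatibility with products, $\anL_{V\times X}\simeq \mathrm{pr}_V^*\anL_V\oplus\mathrm{pr}_X^*\anL_X$, at least in the relative direction over $X$, and that square-zero extensions are stable under this base change. Establishing that compatibility, rather than the formal pushout manipulation, is where the real work lies.
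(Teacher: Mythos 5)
Your final pivot contains the germ of a viable strategy, genuinely different from the paper's, but there is a real gap and it sits exactly where you say ``the real work lies'' --- the proposal names the crux without resolving it. Concretely: if one establishes the identification $V_d[\cF]\times X\simeq (V\times X)_{p^*(d)}[p^*(\cF)]$, where $p\colon V\times X\to V$ is the projection and $p^*(d)$ is the composite $\anL_{V\times X}\to p^*\anL_V\to p^*(\cF)$, together with its split analogue $V[\cF]\times X\simeq(V\times X)[p^*(\cF)]$ compatibly with the maps $\eta_0\times\id_X$ and $\eta_d\times\id_X$, then the lemma follows by applying \cite[Theorem 6.5]{Porta_Yu_Representability} directly to $V\times X$ with the derivation $p^*(d)$. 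But the K\"unneth-type splitting of $\anL_{V\times X}$ you invoke only produces the \emph{candidate} derivation; it says nothing about why the analytic product $V_d[\cF]\times X$ --- whose structure sheaf, as you yourself note, is not computed by any naive or completed tensor operation on global sections --- actually coincides with the square-zero extension that this derivation classifies. The tool that gives access to such structure sheaves is the closed-immersion fiber-product result of \cite[Proposition 6.2]{Porta_Yu_DNAnG_I} (with \cite[Proposition 3.17]{Porta_Yu_DNAnG_I}); this is precisely how the paper handles the split case in \cref{lem:pullback_split_square_zero_extension}, writing $V[\cF]\times X$ as a fiber product over $V$ along the closed immersion $V[\cF]\to V$. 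In the non-split case there is no projection $V_d[\cF]\to V$, so one can only write $V\times X\simeq(V_d[\cF]\times X)\times_{V_d[\cF]}V$, which expresses the \emph{known} structure sheaf in terms of the \emph{unknown} one; one must then identify the fiber of $\cO\alg_{V_d[\cF]\times X}\to\cO\alg_{V\times X}$ with $p^*(\cF)[-1]$ and show the resulting extension is the analytic square-zero extension classified by $p^*(d)$. None of these steps appear in your outline.

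Steps (1)--(3) of your plan are also not repairable as written. A pushout in $\dAnk$ is a universal property against maps into \emph{all} derived $k$-analytic spaces, so it does not ``live entirely in the structure sheaves'' and cannot be checked on an affinoid cover of $X$: that the structure-sheaf square is a pullback on the common underlying topos is necessary but far from sufficient, and making it sufficient is essentially the content of \cite[Theorem 6.5]{Porta_Yu_Representability}. Likewise, ``tensoring is a left adjoint hence preserves the algebraic pushout'' misfires: on algebras the square-zero square is a \emph{limit}, $\Gamma(\cO\alg_V)\times_{\Gamma(\cO\alg_V)\oplus\Gamma(\cF)}\Gamma(\cO\alg_V)$, so what is needed is exactness (flatness) of the relevant tensor operation, and for the completed tensor product this is exactly the non-formal point you flagged. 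For contrast, the paper's own proof never computes the product at all: since every derived $k$-analytic space is an infinitesimally cartesian sheaf by \cite[Lemma 7.7]{Porta_Yu_Representability}, Yoneda reduces the lemma to showing that every infinitesimally cartesian sheaf $F$ carries the square to a pullback, and this follows from the adjunction $F(A\times X)\simeq\Map(A,\bfMap(X,F))$ together with \cref{lem:map_preserves_inf_cartesian}, which asserts that $\bfMap(X,F)$ is again infinitesimally cartesian. The base-change identification your approach requires is, in effect, exactly what the paper uses inside the forward-referenced \cref{lem:map_preserves_inf_cartesian}, so if you do carry it out your route becomes a legitimate, more computational alternative; as submitted, however, the proof is incomplete.
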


\begin{proof}
	By \cite[Lemma 7.7]{Porta_Yu_Representability}, every derived \kanal space is infinitesimally cartesian.
	So it is enough to prove that if $F$ is an infinitesimally cartesian sheaf on $(\dAfdk, \tauet)$, then the canonical map
	\[ F(V_d[\cF] \times X) \to F(V \times X) \times_{F(V[\cF] \times X)} F(V \times X) \]
	is an equivalence.
	We can rewrite the above map as
	\begin{multline*}
	\Map(V_d[\cF], \bfMap(X, F)) \to \\
	\Map(V, \bfMap(X, F)) \times_{\Map(V[\cF], \bfMap(X, F))} \Map(V, \bfMap(X, F)) .
	\end{multline*}	\Cref{lem:map_preserves_inf_cartesian} implies that $\bfMap(X, F)$ is again infinitesimally cartesian.
	Therefore the lemma holds.
\end{proof}

\begin{defin}
	Let $X$ be a derived $k$-affinoid space and set $A \coloneqq \Gamma(\cO\alg_X)$.
	We denote $A \langle T_1, \ldots, T_n \rangle\coloneqq\Gamma\big(\cO\alg_{X \times \bD^n_k}\big)$, and call it the \emph{relative Tate algebra} of dimension $n$ over $X$.
\end{defin}

\begin{lem} \label{lem:relative_disk_flat_I}
	Let $X$ be a derived $k$-affinoid space.
	For every $n,m \ge 0$, we have a canonical equivalence
	\[ \mathrm t_{\le m}(X) \times \bD^n_k \simeq \mathrm t_{\le m}( X \times \bD^n_k ) . \]
\end{lem}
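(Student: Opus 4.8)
The plan is to prove the equivalence by induction on the truncation degree $m$, propagating the Postnikov square-zero presentation of $\mathrm{t}_{\le m}(X)$ through the functor $(-)\times\bD^n_k$ by means of \cref{lem:product_preserves_infinitesimal_pushout}. Throughout I identify a derived affinoid with its algebra of functions via \cref{thm:derived_Kiehl}, writing $A\coloneqq\Gamma(\cO\alg_X)$ and $A\langle T_1,\dots,T_n\rangle=\Gamma(\cO\alg_{X\times\bD^n_k})$, so that the asserted equivalence of spaces becomes the algebra statement $(\tau_{\le m}A)\langle T\rangle\simeq\tau_{\le m}(A\langle T\rangle)$. The point to keep in mind is that $(-)\times\bD^n_k$ is the \emph{analytic} product, so no algebraic tensor-product formula is available and the comparison must be made geometrically.

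For the base case $m=0$ one needs $\trunc(X)\times\bD^n_k\simeq\trunc(X\times\bD^n_k)$. This is the compatibility of the underlying classical truncation with the product by the smooth classical polydisk; on functions it reads $\pi_0(A\langle T\rangle)\simeq\pi_0(A)\langle T\rangle$, the classical relative Tate algebra, which is moreover flat over $\pi_0(A)$ by \cite[\S 9.4.3]{Bosch_Non-archimedean_1984}. I take this compatibility of $\trunc$ with products by smooth classical spaces as part of the underived theory.

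For the inductive step, assume the statement at level $m-1$. Recall that $\mathrm{t}_{\le m}(X)$ is the square-zero extension $V_d[\cF]$ of $V\coloneqq\mathrm{t}_{\le m-1}(X)$ by $\cF\coloneqq\pi_m(\cO\alg_X)[m+1]\in\Coh^{\ge 1}(V)$ along the $k$-invariant $d\colon\anL_V\to\cF$, presented by the pushout square recalled before \cref{lem:product_preserves_infinitesimal_pushout} (cf.\ \cite[Theorem 6.5]{Porta_Yu_Representability}). Applying $(-)\times\bD^n_k$ and \cref{lem:product_preserves_infinitesimal_pushout} exhibits $\mathrm{t}_{\le m}(X)\times\bD^n_k$ as the analogous pushout over $V\times\bD^n_k$. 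By the inductive hypothesis the base is $\mathrm{t}_{\le m-1}(X\times\bD^n_k)$, hence $(m-1)$-truncated, and $V[\cF]\times\bD^n_k$ is the trivial square-zero extension of $V\times\bD^n_k$ by the pullback $p^*\cF$ along the projection $p\colon V\times\bD^n_k\to V$. Since $p$ is classically flat, $p^*\cF=\bigl(\pi_m(\cO\alg_X)\otimes_{\pi_0(A)}\pi_0(A)\langle T\rangle\bigr)[m+1]$ is discrete and concentrated in degree $m+1$. Thus $\mathrm{t}_{\le m}(X)\times\bD^n_k$ is a square-zero extension of an $(m-1)$-truncated space by a degree-$(m+1)$ module, so it is $m$-truncated. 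The canonical map $\mathrm{t}_{\le m}(X)\times\bD^n_k\to X\times\bD^n_k$ therefore factors through the universal $m$-truncation, producing $\Phi\colon\mathrm{t}_{\le m}(X)\times\bD^n_k\to\mathrm{t}_{\le m}(X\times\bD^n_k)$; as both sides restrict to $\mathrm{t}_{\le m-1}(X\times\bD^n_k)$, it remains to check that $\Phi$ is an equivalence on the $m$-th homotopy sheaf.

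The heart of the argument, and the step I expect to be the main obstacle, is to match the two square-zero presentations over the common base $\mathrm{t}_{\le m-1}(X\times\bD^n_k)$: one must show that $p^*\cF$, equipped with the derivation $p^*d$, coincides with the intrinsic $k$-invariant $\pi_m(\cO\alg_{X\times\bD^n_k})[m+1]$ of $\mathrm{t}_{\le m}(X\times\bD^n_k)$. The module part forces $\pi_m(A\langle T\rangle)\simeq\pi_m(A)\otimes_{\pi_0(A)}\pi_0(A)\langle T\rangle$, so the lemma in effect simultaneously delivers the flatness of $A\to A\langle T\rangle$ announced at the start of the section. The derivation part rests on the base-change comparison $p^*\anL_V\to\anL_{V\times\bD^n_k}$ for the analytic cotangent complex along the smooth projection $p$; verifying that $p^*d$ is carried to the $k$-invariant of the product — equivalently, that $(-)\times\bD^n_k$ preserves the connectivity of the Postnikov fibre data — is the delicate point, where I expect to use the functoriality of the analytic cotangent complex from \cite{Porta_Yu_Representability} together with the flatness of the classical Tate algebra, and \emph{not} any tensor-product description of the analytic product.
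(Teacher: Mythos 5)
Your Postnikov-induction route is genuinely different from the paper's proof, but as written it has two gaps, and the first one threatens circularity. In the inductive step you assert that $p^*\cF$ is discrete and concentrated in degree $m+1$ ``since $p$ is classically flat.'' This is not enough: $p^*\cF = p^{-1}\cF \otimes_{p^{-1}\cO\alg_V} \cO\alg_{V\times\bD^n_k}$ is a \emph{derived} tensor product, and its discreteness for a discrete $\cF$ is exactly what flatness of $\cO\alg_{V\times\bD^n_k}$ over $p^{-1}\cO\alg_V$ provides --- that is, flatness (strongness) of $\tau_{\le m-1}A \to (\tau_{\le m-1}A)\langle T_1,\dots,T_n\rangle$, not merely flatness of $\pi_0(A)\to\pi_0(A)\langle T_1,\dots,T_n\rangle$. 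Classical flatness only controls $\Tor_0$; without derived flatness, $p^*\cF$ may have homotopy above degree $m+1$, in which case $\mathrm{t}_{\le m}(X)\times\bD^n_k$ is not even $m$-truncated and your comparison map $\Phi$ does not exist. But derived flatness of the relative Tate algebra is precisely \cref{prop:relative_disk_flat_II}, whose proof in the paper runs this very Postnikov induction and invokes \cref{lem:relative_disk_flat_I} at each stage. So your argument is circular unless you restructure it as a joint induction whose hypothesis carries \emph{both} the equivalence at level $m-1$ \emph{and} the flatness of $\tau_{\le m-1}A\to(\tau_{\le m-1}A)\langle T_1,\dots,T_n\rangle$. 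Your closing remark that the lemma ``simultaneously delivers the flatness'' gestures at this, but the proposal never sets up the joint hypothesis, and without it the discreteness step is unjustified.

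The second gap is that the step you yourself call the heart --- identifying $(p^*\cF, p^*d)$ with the intrinsic $k$-invariant of $\mathrm{t}_{\le m}(X\times\bD^n_k)$ --- is never carried out; ``I expect to use the functoriality of the analytic cotangent complex'' is a plan, not a proof, and in your setup this identification is the entire content of the lemma. In fact you can avoid $k$-invariants altogether: once $\mathrm{t}_{\le m}(X)\times\bD^n_k$ is known to be $m$-truncated, the universal property of truncation (right adjoint to the inclusion of $m$-truncated spaces) and the universal property of the product produce maps in both directions whose composites are forced to be the identities, so no comparison of derivations is needed --- though the truncatedness input still requires the flatness discussed above. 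For contrast, the paper's proof is short and non-inductive: using \cite[Lemma 5.46]{Porta_Yu_Representability}, choose a closed embedding $j\colon X\hookrightarrow \bD^l_k$; then $X\times\bD^n_k \simeq X\times_{\bD^l_k}\bD^{l+n}_k$, and since $j$ is a closed immersion, \cite[Proposition 3.17]{Porta_Yu_DNAnG_I} computes the structure sheaf as $p^{-1}j_*\cO_X\otimes_{p^{-1}\cO_{\bD^l_k}}\cO_{\bD^{l+n}_k}$; because $j_*$ and $p^{-1}$ commute with truncations and $\cO_{\bD^{l+n}_k}$ is flat over $p^{-1}\cO_{\bD^l_k}$ (a purely classical statement about polydisks), truncation passes through the tensor product. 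This sidesteps the analytic product entirely by trading it for a fiber product along a closed immersion over a flat base, which is exactly why the paper can prove this lemma first and only afterwards run the Postnikov induction of \cref{prop:relative_disk_flat_II}.
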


\begin{proof}
	Using \cite[Lemma 5.46]{Porta_Yu_Representability} we can find a closed embedding $j \colon X \hookrightarrow \mathbf D^l_k$ for some $l \ge 0$.
	Then we have canonical equivalences
	\[ (\mathrm t_{\le m} X) \times \mathbf D^n_k \simeq ( \mathrm t_{\le m} X ) \times_{\mathbf D^l_k} \mathbf D^{l+n}_k , \qquad X \times \mathbf D^n_k \simeq X \times_{\mathbf D^l_k} \mathbf D^{l+n}_k . \]
	Denote by $p \colon \bD^{l+n}_k \to \bD^n_k$ the canonical projection.
	Combining \cite[Proposition 3.17]{Porta_Yu_DNAnG_I} with the above equivalences, we can express the structure sheaf of $X \times \bD^n_k$ as the tensor product
	\[ p\inv j_* \cO_X \otimes_{p\inv \cO_{\bD^l_k}} \cO_{\bD^{l+n}_k} . \]
	Notice now that both $j_*$ and $p\inv$ commute with the truncation functors.
	Therefore, the lemma follows from the flatness of $p\inv \cO_{\bD^n_k} \to \bD^{l+n}_k$.
\end{proof}

\begin{lem} \label{lem:pullback_split_square_zero_extension}
	Let $X, Y$ be derived \kanal spaces and let $ \cF \in \Coh^{\ge 1}(X)$.
	The canonical projection $p \colon X \times Y \to X$ induces an equivalence
	\[ X[\cF] \times Y \simeq (X \times Y)[p^*(\cF)] . \]
\end{lem}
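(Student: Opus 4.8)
The plan is to realize $X[\cF]\times Y$ as the base change of the split square-zero extension $\pi\colon X[\cF]\to X$ along the projection $p$, and then to show that base change turns a split extension into a split extension of the pulled-back module. The first step is purely formal: since the projection $X\times Y\to X$ is the base change of $Y\to\Spec k$ along $X\to\Spec k$, associativity of fibre products yields a canonical identification
\[ X[\cF]\times Y \simeq X[\cF]\times_X (X\times Y) . \]
Writing $W\coloneqq X[\cF]\times_X(X\times Y)$, the statement to prove becomes the base-change compatibility $W\simeq (X\times Y)[p^*(\cF)]$, a special case (for $\phi=p$) of $X[\cF]\times_X X'\simeq X'[\phi^*(\cF)]$ for a morphism $\phi\colon X'\to X$.

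The key computation is that of the algebraic structure sheaf of $W$. Because $\cF\in\Coh^{\ge 1}(X)$, the extension $\pi$ induces an equivalence on underlying \inftopoi, and this property is stable under base change, so $W$ and $X\times Y$ share the same underlying \inftopos. Applying \cite[Proposition 3.17]{Porta_Yu_DNAnG_I} to express the structure sheaf of the fibre product, together with $\cO\alg_{X[\cF]}\simeq\cO\alg_X\oplus\cF$, I obtain
\[ \cO\alg_W \simeq p^{-1}\cO\alg_{X[\cF]}\otimes_{p^{-1}\cO\alg_X}\cO\alg_{X\times Y} \simeq \cO\alg_{X\times Y}\oplus p^*(\cF), \]
where the second equivalence uses the additivity of the relative tensor product and the identification $p^{-1}\cF\otimes_{p^{-1}\cO\alg_X}\cO\alg_{X\times Y}\simeq p^*(\cF)$. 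The multiplication restricted to the summand $p^*(\cF)$ is the pullback of the vanishing multiplication $\cF\otimes\cF\to 0$ inside $\cO\alg_{X[\cF]}$, hence it vanishes; thus $W$ is a square-zero extension of $X\times Y$ by $p^*(\cF)$ at the level of algebraic structure sheaves.

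To upgrade this to an equivalence of derived \kanal spaces I would invoke the classification of analytic square-zero extensions by analytic derivations from \cite[\S\S 5--6]{Porta_Yu_Representability}. Functoriality of the construction provides a canonical comparison morphism $(X\times Y)[p^*(\cF)]\to X[\cF]\times Y$ over $X\times Y$; conceptually, both sides are square-zero extensions of $X\times Y$ by $p^*(\cF)$, the target being the base change along $p$ of the split extension $X[\cF]$, so it is classified by the pullback of the zero derivation, namely the zero derivation, exactly as the source. The computation above shows that the comparison morphism induces an equivalence on underlying \inftopoi and on algebraic structure sheaves, and since such data detects equivalences of derived \kanal spaces, it is an equivalence, establishing \cref{lem:pullback_split_square_zero_extension}.

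The main obstacle lies in the structure-sheaf computation of the second paragraph. In algebraic geometry the analogue is the formal identity $(A\oplus M)\otimes_A A'\simeq A'\oplus(M\otimes_A A')$, but here $X\times Y$ is governed by a completed tensor product rather than an algebraic one, so the decomposition is not automatic. The point is to check carefully that \cite[Proposition 3.17]{Porta_Yu_DNAnG_I} still identifies $\cO\alg_W$ with the indicated relative tensor product and that this tensor product remains additive in the analytic setting, so that the direct-sum splitting, and with it the trivial square-zero structure, survives the passage to the analytic product.
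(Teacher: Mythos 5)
Your proof follows essentially the same route as the paper's: realize $X[\cF]\times Y$ as $X[\cF]\times_X(X\times Y)$, use $\cF\in\Coh^{\ge 1}(X)$ to see that the map being base-changed is a closed immersion inducing an equivalence of underlying \inftopoi, compute the algebraic structure sheaf of the pullback as the algebraic pushout (the paper cites \cite[Proposition 6.2]{Porta_Yu_DNAnG_I} for this, rather than Proposition 3.17) and identify it with the split extension $\cO\alg_{X\times Y}\oplus p^*(\cF)$, then conclude by conservativity of $(-)\alg$. The worry in your final paragraph is resolved exactly by the closed-immersion observation you already made: because $f_0\colon X[\cF]\to X$ is a closed immersion, the structure sheaf of the pullback is computed by the genuinely algebraic (not completed) pushout of sheaves of simplicial commutative algebras, which is precisely the content of the cited proposition, so the direct-sum splitting goes through.
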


\begin{proof}
	Let $f_0 \colon X[\cF] \to X$ be the map corresponding to the zero derivation $\anL_X \to \cF$.
	Consider the pullback diagram
	\begin{equation} \label{eq:pullback_split_square_zero_extension}
		\begin{tikzcd}
			X[\cF] \times Y \arrow{r} \arrow{d} & X[\cF] \arrow{d}{f_0} \\
			X \times Y \arrow{r}{p} \arrow{d} & X \arrow{d} \\
			Y \arrow{r} & \Sp(k) .
		\end{tikzcd}
	\end{equation}
	Notice that $p$ induces a canonical morphism of $\cTank$-structures on the $\infty$-topos underlying $X \times Y$
	\[ p\inv( \cO_X \oplus \cF ) \longrightarrow \cO_{X \times Y} \oplus p^*(\cF) , \]
	corresponding to a morphism of derived \kanal spaces $(X \times Y)[p^*(\cF)] \to X[\cF]$.
	In turn this induces a well-defined morphism $(X \times Y)[p^*(\cF)] \to X[\cF] \times Y$.
	We claim that this morphism is an equivalence.
	
	Since $\cF \in \Coh^{\ge 1}(X)$, we see that $f_0$ is a closed immersion.
	We can therefore use \cite[Proposition 6.2]{Porta_Yu_DNAnG_I} to describe the upper pullback in \eqref{eq:pullback_split_square_zero_extension} as follows: its underlying $\infty$-topos coincides with the underlying $\infty$-topos of $X \times Y$, while the underlying algebra of its structure sheaf is the pushout
	\[ \begin{tikzcd}
		p\inv \cO_X\alg \arrow{r} \arrow{d} & p\inv(\cO_X\alg \oplus \cF) \arrow{d} \\
		\cO_{X \times Y}\alg \arrow{r} & \cO_{X[\cF] \times Y}\alg .
	\end{tikzcd} \]
	As this is a pushout in the $\infty$-category of sheaves of simplicial commutative algebras over $X \times Y$, we can canonically identify the above pushout with the split square-zero extension $\cO_{X \times Y}\alg \oplus p^*(\cF)$.
	Now the conclusion follows because $(-)\alg$ is conservative.
\end{proof}

\begin{prop} \label{prop:relative_disk_flat_II}
	Let $X$ be a derived $k$-affinoid space and set $A \coloneqq \Gamma(\cO\alg_X)$.
	The canonical map
	\[ A \to A \langle T_1, \ldots, T_n \rangle \]
	is flat.
\end{prop}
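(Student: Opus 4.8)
The plan is to verify that $\varphi\colon A \to A\langle T_1,\ldots,T_n\rangle$ is flat by checking the two conditions characterizing flatness of a morphism of simplicial commutative rings, exactly as in the proof of \cref{lem:global_section_etale_is_flat}: namely that $\pi_0(\varphi)$ is flat, and that $\varphi$ is \emph{strong}, i.e.\ that the canonical maps
\[ \pi_i(A) \otimes_{\pi_0(A)} \pi_0\!\big(A\langle T_1,\ldots,T_n\rangle\big) \longrightarrow \pi_i\!\big(A\langle T_1,\ldots,T_n\rangle\big) \]
are isomorphisms for all $i$. Throughout I would use \cref{thm:derived_Tate_acyclicity} to identify $\pi_i(A) \simeq \Gamma(\pi_i(\cO\alg_X))$, and likewise for $A\langle T_1,\ldots,T_n\rangle = \Gamma(\cO\alg_{X \times \bD^n_k})$.

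For the $\pi_0$-statement, \cref{lem:relative_disk_flat_I} with $m=0$ gives $\trunc(X) \times \bD^n_k \simeq \trunc(X \times \bD^n_k)$, so that $\pi_0(A\langle T_1,\ldots,T_n\rangle)$ is the ring of global sections of the classical relative Tate algebra over the $k$-affinoid space $\trunc(X)$. The map $\pi_0(A) \to \pi_0(A)\langle T_1,\ldots,T_n\rangle$ is then flat by the classical theory of affinoid algebras (the relative Tate algebra is flat over its base), which settles the first condition.

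The main work is strongness. First I would produce, via \cite[Lemma 5.46]{Porta_Yu_Representability}, a closed embedding $j\colon X \hookrightarrow \bD^l_k$ and write $X \times \bD^n_k \simeq X \times_{\bD^l_k} \bD^{l+n}_k$, as in the proof of \cref{lem:relative_disk_flat_I}. By \cite[Proposition 3.17]{Porta_Yu_DNAnG_I} the underlying algebra of the structure sheaf is the tensor product $p\inv j_* \cO\alg_X \otimes_{p\inv \cO\alg_{\bD^l_k}} \cO\alg_{\bD^{l+n}_k}$, where $p\colon \bD^{l+n}_k \to \bD^l_k$ is the projection. Since $p\inv\cO\alg_{\bD^l_k} \to \cO\alg_{\bD^{l+n}_k}$ is flat and $j_*$ is exact (as $j$ is a closed immersion), flat base change at the level of homotopy sheaves yields
\[ \pi_i(\cO\alg_{X \times \bD^n_k}) \simeq q^* \pi_i(\cO\alg_X) , \]
where $q\colon \trunc(X) \times \bD^n_k \to \trunc(X)$ is the flat projection of the truncations and $q^*$ denotes pullback of coherent sheaves. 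Applying $\Gamma$ and combining \cref{thm:derived_Tate_acyclicity} with the classical Kiehl theorem — which ensures that global sections of a coherent sheaf commute with pullback along the flat map $q$ — gives
\[ \pi_i(A\langle T_1,\ldots,T_n\rangle) \simeq \Gamma(\pi_i(\cO\alg_X)) \otimes_{\pi_0(A)} \pi_0(A)\langle T_1,\ldots,T_n\rangle \simeq \pi_i(A) \otimes_{\pi_0(A)} \pi_0(A\langle T_1,\ldots,T_n\rangle) , \]
which is exactly strongness.

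I expect the main obstacle to be this strongness step, and within it the identification $\pi_i(\cO\alg_{X\times\bD^n_k}) \simeq q^*\pi_i(\cO\alg_X)$: one must check carefully that the flatness of the polydisk projection lets flat base change pass to each homotopy sheaf, and that $\Gamma$ then commutes with the flat pullback $q^*$ on coherent sheaves. The $\pi_0$-statement is essentially classical, and the reduction to a closed embedding into a polydisk is precisely what makes the flat base change available.
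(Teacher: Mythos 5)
Your proof is correct, but it follows a genuinely different route from the paper's. The paper argues by induction on the Postnikov tower of $X$: the base case ($X$ discrete) is settled by a noetherian completion argument (the formal completions of $A \langle T_1, \ldots, T_n \rangle$ at maximal ideals are power series rings), and the inductive step uses the square-zero extension machinery --- \cref{lem:product_preserves_infinitesimal_pushout} together with \cite[Corollary 5.42]{Porta_Yu_Representability} to exhibit $(\mathrm t_{\le m+1} X) \times \bD^n_k$ as an infinitesimal pushout, then \cref{lem:pullback_split_square_zero_extension} and \cref{thm:derived_Kiehl} to propagate strongness one homotopy level at a time. You instead obtain strongness in one stroke, by observing that the tensor-product description of $\cO\alg_{X \times \bD^n_k}$ used in the proof of \cref{lem:relative_disk_flat_I} (closed embedding $X \hookrightarrow \bD^l_k$ plus \cite[Proposition 3.17]{Porta_Yu_DNAnG_I}) computes not just the truncation but \emph{every} homotopy sheaf, since the polydisk projection is flat; classical Kiehl plus finiteness of the modules then converts $\pi_i(\cO\alg_{X \times \bD^n_k}) \simeq q^* \pi_i(\cO\alg_X)$ into the strongness isomorphism after applying $\Gamma$. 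What your route buys: it is shorter, needs no induction, and avoids the deformation-theoretic input entirely --- in particular \cref{lem:product_preserves_infinitesimal_pushout}, whose proof forward-references \cref{lem:map_preserves_inf_cartesian} from \cref{sec:mapping_stack}. What the paper's route buys: the Postnikov induction is the pattern that generalizes when no global closed embedding into a polydisk is available, and its base case proves (rather than cites) the classical flatness of $\pi_0(A) \to \pi_0(A)\langle T_1, \ldots, T_n\rangle$ that you take as input. Two points you should still tighten: the isomorphism $\pi_i(\cO\alg_{X \times \bD^n_k}) \simeq q^* \pi_i(\cO\alg_X)$ must be checked to be induced by the canonical structure map, so that it really is the strongness map; and the appeal to Kiehl's theorem on the étale site should, as elsewhere in the paper, be coupled with fpqc descent \cite{Conrad_Descent_for_coherent_2003}.
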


\begin{proof}
	We proceed by induction on the Postnikov tower of $X$.
	\Cref{lem:relative_disk_flat_I} implies that if $X$ is discrete then the same goes for $X \times \bD^n_k$.
	Using \cref{thm:derived_Kiehl}, we deduce that in this case $A \langle T_1, \ldots, T_n \rangle$ is discrete as well.
	In this case, both $A$ and $A\langle T_1, \ldots, T_n \rangle$ are noetherian.
	We can therefore check flatness on the formal completions at the maximal ideals of $A \langle T_1, \ldots, T_n \rangle$.
	As we can identify such formal completions with formal power series rings, the proposition holds in this case.
	
	Assume now that the statement has already been proven when the structure sheaf of $X$ is $m$-truncated.
	Set $\cF \coloneqq \pi_{m+1} \cO\alg_X[m+2]$.
	Combining \cref{lem:product_preserves_infinitesimal_pushout} and \cite[Corollary 5.42]{Porta_Yu_Representability}, we deduce that the square
	\[ \begin{tikzcd}
		(\mathrm t_{\le m} X[\cF]) \times \mathbf D^n_k \arrow{r} \arrow{d} & (\mathrm t_{\le m} X) \times \mathbf D^n_k \arrow{d} \\
		(\mathrm t_{\le m} X) \times \mathbf D^n_k \arrow{r} & (\mathrm t_{\le m+1} X) \times \mathbf D^n_k
	\end{tikzcd} \]
	is a pushout in $\dAnk$.
	Denote by $p_m \colon \mathrm t_{\le m} X \times \bD^n_k \to \mathrm t_{\le m} X$ the canonical projection.
	Furthermore, let $\cX$ be the underlying $\infty$-topos of $X$ and $\cY$ the underlying $\infty$-topos of $X \times \bD^n_k$.
	The morphism $p_m$ induces a geometric morphism of $\infty$-topoi (independent of $m$)
	\[ p\inv \colon \cX \leftrightarrows \cY \colon p_* . \]
	Combining Lemmas \ref{lem:relative_disk_flat_I} and \ref{lem:pullback_split_square_zero_extension}, we obtain an equivalence
	\begin{multline*}
	\pi_{m+1}(\cO\alg_{X \times \bD^n_k})[m+2] \simeq \pi_{m+1}( \tau_{\le m+1} \cO\alg_{X \times \bD^n_k} )[m+2] \\
	\simeq p\inv(\cF) \otimes_{p\inv( \tau_{\le m} \cO\alg_X )} \tau_{\le m} \cO\alg_{X \times \bD^n_k} \simeq p_m^*( \cF ) .
	\end{multline*}
	Passing to the global sections and using \cref{thm:derived_Kiehl}, we obtain
	\begin{align*}
		\pi_{m+1}( A \langle T_1, \ldots, T_n \rangle ) & \simeq \pi_{m+1}(A) \otimes_{\tau_{\le m} A} \tau_{\le m} ( A \langle T_1, \ldots, T_n \rangle ) \\
		& \simeq \pi_{m+1}(A) \otimes_{\pi_0(A)} \pi_0(A) \otimes_{\tau_{\le m} A} \tau_{\le m}( A \langle T_1, \ldots, T_n \rangle ) .
	\end{align*}
	The induction hypothesis guarantees that $\tau_{\le m}( A \langle T_1, \ldots, T_n \rangle)$ is flat over $\tau_{\le m}(A)$.
	Therefore we have
	\[ \pi_0(A) \otimes_{\tau_{\le m} A} \tau_{\le m}(A \langle T_1, \ldots, T_n \rangle) \simeq \pi_0(A \langle T_1, \ldots, T_n \rangle ) . \]
	Combining this with the above expression for $\pi_{m+1}( A \langle T_1, \ldots, T_n \rangle )$ we obtain an isomorphism
	\[ \pi_{m+1}( A \langle T_1, \ldots, T_n \rangle ) \simeq \pi_{m+1}(A) \otimes_{\pi_0(A)} \pi_0(A \langle T_1, \ldots, T_n \rangle ) . \]
	Together with the induction hypothesis, this guarantees that the map
	\[ \tau_{\le m+1} A \longrightarrow \tau_{\le m+1}( A \langle T_1, \ldots, T_n \rangle ) \]
	is strong.
	Therefore, the proof is complete.
\end{proof}

\begin{cor} \label{cor:pullback_finite_disk_projection}
	Let $f \colon X \to Y$ be a finite map of derived $k$-affinoid spaces.
	Set $A \coloneqq \Gamma(\cO\alg_Y)$ and $B \coloneqq \Gamma(\cO\alg_X)$.
	Then the square
	\[ \begin{tikzcd}
		A \arrow{r} \arrow{d} & A \langle T_1, \ldots, T_n \rangle \arrow{d} \\
		B \arrow{r} & B \langle T_1, \ldots, T_n \rangle
	\end{tikzcd} \]
	is a pushout square in $\CRing_k$.
\end{cor}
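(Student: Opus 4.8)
The plan is to unwind what it means for the square to be a pushout in $\CRing_k$: since pushouts of simplicial commutative $k$-algebras under a fixed base are computed by the relative tensor product, the assertion is equivalent to showing that the canonical comparison map
\[ \eta\colon B\otimes_A A\langle T_1,\ldots,T_n\rangle\longrightarrow B\langle T_1,\ldots,T_n\rangle \]
is an equivalence. This map exists because the square commutes: all four arrows are obtained by applying the functor $\Gamma\circ(-)\alg$ to the commutative square of derived $k$-affinoid spaces with vertices $X\times\bD^n_k$, $Y\times\bD^n_k$, $X$, $Y$, whose horizontal arrows are the canonical projections and whose vertical arrows are $f$ and $f\times\id_{\bD^n_k}$. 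The universal property of the pushout of the span $A\langle T_1,\ldots,T_n\rangle\leftarrow A\to B$ then produces $\eta$, and I must prove it is an equivalence.

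Since source and target are both connective, it suffices to check that $\eta$ induces an isomorphism on every homotopy group. Here the key input is \cref{prop:relative_disk_flat_II}, which guarantees that both horizontal maps $A\to A\langle T_1,\ldots,T_n\rangle$ and $B\to B\langle T_1,\ldots,T_n\rangle$ are flat. Flatness of $A\to A\langle T_1,\ldots,T_n\rangle$ makes $-\otimes_A A\langle T_1,\ldots,T_n\rangle$ $t$-exact and yields
\[ \pi_i\big(B\otimes_A A\langle T_1,\ldots,T_n\rangle\big)\simeq \pi_i(B)\otimes_{\pi_0(A)}\pi_0\big(A\langle T_1,\ldots,T_n\rangle\big), \]
while flatness of $B\to B\langle T_1,\ldots,T_n\rangle$ gives
\[ \pi_i\big(B\langle T_1,\ldots,T_n\rangle\big)\simeq \pi_i(B)\otimes_{\pi_0(B)}\pi_0\big(B\langle T_1,\ldots,T_n\rangle\big). \]
Thus the whole statement reduces to the case $i=0$, that is, to the claim that $\eta$ induces an isomorphism $\pi_0(B)\otimes_{\pi_0(A)}\pi_0(A\langle T_1,\ldots,T_n\rangle)\to\pi_0(B\langle T_1,\ldots,T_n\rangle)$. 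Granting this, substituting the resulting identification $\pi_0(B\langle T_1,\ldots,T_n\rangle)\simeq\pi_0(B)\otimes_{\pi_0(A)}\pi_0(A\langle T_1,\ldots,T_n\rangle)$ into the second display and using associativity of the tensor product matches the two expressions for $\pi_i$ compatibly with $\eta$, so that $\eta$ is an equivalence.

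To settle the $\pi_0$-level statement I would first identify the truncations. By \cref{lem:relative_disk_flat_I} with $m=0$ we have $\trunc(Y\times\bD^n_k)\simeq\trunc(Y)\times\bD^n_k$, and since $\Gamma$ is $t$-exact on coherent sheaves (\cref{thm:derived_Tate_acyclicity}) this gives $\pi_0(A\langle T_1,\ldots,T_n\rangle)\simeq \pi_0(A)\langle T_1,\ldots,T_n\rangle$, the ordinary relative Tate algebra over the $k$-affinoid algebra $\pi_0(A)$; likewise $\pi_0(B\langle T_1,\ldots,T_n\rangle)\simeq\pi_0(B)\langle T_1,\ldots,T_n\rangle$. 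The remaining assertion is then the purely classical statement that $\pi_0(B)\otimes_{\pi_0(A)}\pi_0(A)\langle T_1,\ldots,T_n\rangle\cong\pi_0(B)\langle T_1,\ldots,T_n\rangle$, which holds because $\trunc(f)$ is finite: $\pi_0(B)$ is then a finite $\pi_0(A)$-module, so the completed tensor product computing the affinoid algebra $\pi_0(B)\langle T_1,\ldots,T_n\rangle$ agrees with the algebraic tensor product. This finiteness input is the only place where the hypothesis on $f$ enters, and it is the crux of the argument; everything else is formal manipulation of flat base change on homotopy groups.
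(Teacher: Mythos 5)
Your proposal is correct and follows essentially the same route as the paper's proof: both use \cref{prop:relative_disk_flat_II} to reduce the equivalence $B\otimes_A A\langle T_1,\ldots,T_n\rangle \to B\langle T_1,\ldots,T_n\rangle$ to an isomorphism on $\pi_0$, and then invoke finiteness of $\pi_0(B)$ over $\pi_0(A)$ to identify the completed tensor product $\pi_0(A)\langle T_1,\ldots,T_n\rangle\cotimes_{\pi_0(A)}\pi_0(B)$ with the algebraic one. The only difference is bookkeeping: where the paper cites \cite[Lemma 5.45]{Porta_Yu_Representability} to conclude that the comparison map is strong, you re-derive this by hand via flat base change on homotopy groups, and you make explicit (via \cref{lem:relative_disk_flat_I} and \cref{thm:derived_Tate_acyclicity}) the identification $\pi_0(A\langle T_1,\ldots,T_n\rangle)\simeq\pi_0(A)\langle T_1,\ldots,T_n\rangle$ that the paper leaves implicit.
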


\begin{proof}
	Let
	\[ R \coloneqq A \langle T_1, \ldots, T_n \rangle \otimes_A B . \]
	\Cref{prop:relative_disk_flat_II} implies that both maps $B \to R$ and $B \to B \langle T_1, \ldots, T_n \rangle$ are flat.
	It follows from \cite[Lemma 5.45]{Porta_Yu_Representability} that the canonical map
	\[ R \longrightarrow B \langle T_1, \ldots, T_n \rangle \]
	is strong.
	It is therefore enough to prove that it induces an isomorphism on $\pi_0$.
	
	Since $\pi_0(B)$ is finite as $\pi_0(A)$-module, we have a canonical equivalence
	\[ \pi_0(A) \langle T_1, \ldots, T_n \rangle {\cotimes}_{\pi_0(A)} \pi_0(B) \simeq \pi_0(A) \langle T_1, \ldots, T_n \rangle \otimes_{\pi_0(A)} \pi_0(B) , \]
	which implies the statement.
\end{proof}

\begin{cor} \label{cor:pullback_closed_immersion_global_section}
	In the context of \cref{prop:pullback_finite_map}, assume furthermore that $f$ is a closed immersion.
	Then the canonical map
	\[ \eta \colon A' \otimes_A B \longrightarrow B \]
	is an equivalence.
\end{cor}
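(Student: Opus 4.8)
The plan is to reduce the statement to the two cases already settled, namely \cref{lem:global_section_pullback_double_closed_immersions} (where both horizontal and vertical maps are closed immersions) and \cref{cor:pullback_finite_disk_projection} (pullback of a relative polydisk projection along a finite map), by factoring the finite map $v$ through a closed embedding into a relative polydisk. Throughout, the tensor products are the relative tensor products of simplicial commutative $k$-algebras, i.e.\ pushouts in $\CRing_k$, and $B' \coloneqq \Gamma(\cO\alg_{X'})$ is the object to be identified, so that the canonical map under consideration is the one of \cref{prop:pullback_finite_map}.

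First I would factor $v$. Since $v$ is finite, the map $\pi_0(A)\to\pi_0(A')$ is module-finite; choosing topological generators of $\pi_0(A')$ over $\pi_0(A)$ produces a surjection $\pi_0(A)\langle T_1,\ldots,T_n\rangle\twoheadrightarrow\pi_0(A')$, which lifts to a closed immersion $\iota\colon Y'\hookrightarrow Y\times\bD^n_k$ of derived $k$-affinoid spaces over $Y$ with $v=\pi\circ\iota$, where $\pi\colon Y\times\bD^n_k\to Y$ is the projection. Interpolating this factorization into the original pullback square and completing by successive pullbacks, I obtain
\[ \begin{tikzcd}
X' \arrow{r} \arrow{d}{g} & X \times \bD^n_k \arrow{r}{p} \arrow{d}{f \times \id} & X \arrow{d}{f} \\
Y' \arrow{r}{\iota} & Y \times \bD^n_k \arrow{r}{\pi} & Y
\end{tikzcd} \]
in which both squares are pullbacks, $p$ is the disk projection, and $f\times\id$ is again a closed immersion, being the base change of $f$ along $\pi$.

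For the right-hand square I would use that a closed immersion is in particular finite, so \cref{cor:pullback_finite_disk_projection} applies to $f$ and yields, on global sections, an equivalence
\[ B\langle T_1, \ldots, T_n \rangle \simeq A\langle T_1, \ldots, T_n \rangle \otimes_A B , \]
where $B\langle T_1,\ldots,T_n\rangle = \Gamma(\cO\alg_{X\times\bD^n_k})$. For the left-hand square, both $\iota$ and $f\times\id$ are closed immersions, so \cref{lem:global_section_pullback_double_closed_immersions} gives, after global sections, an equivalence
\[ B' \simeq A' \otimes_{A\langle T_1, \ldots, T_n \rangle} B\langle T_1, \ldots, T_n \rangle . \]

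Finally I would combine the two identifications: using associativity of the relative tensor product together with the $A\langle T_1,\ldots,T_n\rangle$-algebra structure on $A'$ induced by $\iota^\sharp$, I get
\[ B' \simeq A' \otimes_{A\langle T_1, \ldots, T_n \rangle} \big( A\langle T_1, \ldots, T_n \rangle \otimes_A B \big) \simeq A' \otimes_A B , \]
so the canonical map $\eta$ is an equivalence. The only step demanding genuine care is the relative embedding $\iota$: I must verify that the surjection on $\pi_0$ indeed lifts to a closed immersion of \emph{derived} affinoids over $Y$ factoring $v$, so that both auxiliary results apply verbatim. Once that factorization is secured, the remainder is a formal manipulation of pushout squares.
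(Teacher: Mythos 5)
Your proposal is correct and follows essentially the same route as the paper: factor $v$ through a closed immersion into $Y \times \bD^n_k$ followed by the projection, split the pullback square into two, and apply \cref{lem:global_section_pullback_double_closed_immersions} to the left square and \cref{cor:pullback_finite_disk_projection} (closed immersions being finite) to the right square. The one step you flag as needing care --- lifting the factorization to a closed immersion of \emph{derived} affinoids over $Y$ --- is exactly what the paper outsources to \cite[Lemma 5.46]{Porta_Yu_Representability}, which supplies such a factorization for any morphism of derived $k$-affinoid spaces (no finiteness of $v$ needed), so your argument closes without further work.
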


\begin{proof}
	Using \cite[Lemma 5.46]{Porta_Yu_Representability}, we can factor $v$ as
	\[ \begin{tikzcd}
		Y' \arrow[hook]{r}{v'} & Y \times \bD^n_k \arrow{r}{p} & Y ,
	\end{tikzcd} \]
	where $v'$ is a closed immersion and $p$ is the canonical projection.
	We can therefore decompose the given pullback square into
	\[ \begin{tikzcd}
		X' \arrow{r} \arrow{d} & X \times \bD^n_k \arrow{r} \arrow{d} & X \arrow{d} \\
		Y' \arrow{r} & Y \times \bD^n_k \arrow{r} & Y .
	\end{tikzcd} \]
	It is enough to prove that the result holds for the two squares separately.
	The proof for left square has already been dealt with in \cref{lem:global_section_pullback_double_closed_immersions}, while the proof for the right square follows from \cref{cor:pullback_finite_disk_projection}, because closed morphisms are in particular finite ones.
\end{proof}

We are now ready for the proof of \cref{prop:pullback_finite_map}.

\begin{proof}[Proof of \cref{prop:pullback_finite_map}]
	Using \cite[Lemma 5.46]{Porta_Yu_Representability} we can factor $f$ as
	\[ \begin{tikzcd}
		X \arrow[hook]{r}{f'} & Y \times \bD^n_k \arrow{r}{p} & Y ,
	\end{tikzcd} \]
	where $f'$ is a closed immersion and $p$ is the canonical projection.
	We can therefore decompose the given pullback diagram as follows:
	\[ \begin{tikzcd}
			X' \arrow{r}{u} \arrow{d} & X \arrow[hook]{d}{f'} \\
			Y' \times \bD^n_k \arrow{r} \arrow{d} & Y \times \bD^n_k \arrow{d}{p} \\
			Y' \arrow{r}{v} & Y .
	\end{tikzcd} \]
	It is enough to prove that the result holds separately for the two squares.
	The lower square has been dealt with in \cref{cor:pullback_finite_disk_projection}, while the upper square has been dealt with in \cref{cor:pullback_closed_immersion_global_section}.
	Thus, the proof is complete.
\end{proof}

\section{Projection formula}

The goal of this section is to prove a projection formula for coherent sheaves on derived \kanal spaces (cf.\ \cref{thm:projection_formula}).

\begin{defin} \label{def:coherent_cohomological_dimension}
	Let $p \colon X \to Y$ be a morphism of derived \kanal stacks.
	\begin{enumerate}
		\item We say that $p$ is \emph{proper} if its truncation $\trunc(p)$ is proper in the sense of \cite[Definition 4.8]{Porta_Yu_Higher_analytic_stacks_2014}.
		\item We say that $p$ has \emph{coherent cohomological dimension less than $n$} for an integer $n\ge 0$ if for every $\cF \in \Coh^{\ge 0}(X)$, every $i<-n$, we have $\pi_i(p_* \cF) = 0$.
		We say that $p$ has finite coherent cohomological dimension if it has coherent cohomological dimension less than an integer $n\ge 0$.
	\end{enumerate}
\end{defin}

\begin{lem}
	Let $p \colon X \to Y$ be a morphism of derived \kanal stacks.
	Assume that $p$ is proper and has coherent cohomological dimension less than $n$.
	Then the functor of stable $\infty$-categories $p_* \colon \cO_X \Mod \to \cO_Y \Mod$ restricts to a functor
	\[ p_* \colon \Coh^+(X) \to \Coh^+(Y). \]
\end{lem}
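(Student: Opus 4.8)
The statement asserts two things about $p_*(\cF)$ for $\cF\in\Coh^+(X)$: that it is bounded below, and that each homotopy sheaf $\pi_i(p_*(\cF))$ is coherent over $\pi_0(\cO_Y\alg)$. Boundedness below is immediate from the hypotheses. Indeed, since $\cF\in\Coh^+(X)$ we have $\cF\in\cO_X\Mod^{\ge m}$ for some $m$, so $\cF[-m]\in\Coh^{\ge 0}(X)$; the assumption that $p$ has coherent cohomological dimension less than $n$ gives $\pi_i(p_*(\cF[-m]))=0$ for $i<-n$, and since $p_*$ is an exact functor of stable \infcats it commutes with the shift, whence $\pi_j(p_*(\cF))=0$ for $j<m-n$. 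Thus $p_*(\cF)\in\cO_Y\Mod^{\ge m-n}$ is bounded below, and it remains only to treat coherence of the homotopy sheaves.

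For coherence the plan is a dévissage reducing to a discrete sheaf. Fix $j$; I want $\pi_j(p_*(\cF))$ coherent. For an integer $N$ consider the fiber sequence $\tau_{\ge N}\cF\to\cF\to\tau_{\le N-1}\cF$ and apply the exact functor $p_*$. Since $\tau_{\ge N}\cF\in\Coh^{\ge N}(X)$, the boundedness estimate above yields $p_*(\tau_{\ge N}\cF)\in\cO_Y\Mod^{\ge N-n}$; choosing $N>j+1+n$, the long exact sequence of homotopy sheaves gives $\pi_j(p_*(\cF))\simeq\pi_j(p_*(\tau_{\le N-1}\cF))$. As $\cF$ is also bounded below, $\tau_{\le N-1}\cF\in\Cohb(X)$, so I am reduced to bounded coherent sheaves. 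For $\cG\in\Cohb(X)$ there are only finitely many nonzero $\pi_\ell(\cG)$; using the fiber sequences $\pi_\ell(\cG)[\ell]\to\tau_{\le\ell}\cG\to\tau_{\le\ell-1}\cG$, the exactness of $p_*$ and the long exact sequence, an induction on the number of nonzero homotopy sheaves reduces coherence of all $\pi_j(p_*(\cG))$ to the case $\cG=\cE[\ell]$ with $\cE\in\Cohh(X)$. Commuting $p_*$ past the shift, the problem becomes: for $\cE\in\Cohh(X)$ discrete, each $\pi_i(p_*(\cE))$ is coherent.

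This discrete case is where the real content lies. The pushforward $p_*(\cE)$ is computed by the geometric morphism of \inftopoi underlying $p$, which is the same as the one underlying the proper morphism of underived analytic stacks $\trunc(p)\colon\trunc(X)\to\trunc(Y)$; hence $\pi_{-j}(p_*(\cE))$ is the classical higher direct image $R^j\trunc(p)_*(\cE)$. Coherence of these is the finiteness theorem for proper morphisms of $k$-analytic stacks: since coherence is local on $Y$ for the smooth topology one first reduces to $Y$ affinoid, where, by Kiehl's theorem \cite[\S 9.4.3]{Bosch_Non-archimedean_1984} together with étale descent \cite{Porta_Yu_Higher_analytic_stacks_2014} and \cref{thm:derived_Kiehl}, the module $\pi_{-j}(p_*(\cE))$ is finitely generated over $\pi_0(A)$ with $A=\Gamma(\cO\alg_Y)$. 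I expect this discrete step to be the main obstacle, precisely because it requires identifying the derived pushforward with the classical one and handling the localization on $Y$ by hand—the full proper base change, which would make such reductions automatic, is only established later in the paper, so here one must rely on the more elementary behaviour of $p_*$ along smooth (flat) morphisms of the base.
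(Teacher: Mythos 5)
Your overall strategy is the same as the paper's, just unrolled. Where you run a dévissage on Postnikov truncations of $\cF$ via fiber sequences and long exact sequences, the paper packages exactly that reduction as the spectral sequence $E_2^{ij} = \pi_i\left(p_*(\pi_j(\cF))\right) \Rightarrow \pi_{i+j}(p_*\cF)$, whose convergence is guaranteed by the finite coherent cohomological dimension hypothesis; coherence of the abutment then follows because each $E_2$-term is coherent. Your boundedness argument, your reduction of coherence to $\Cohb(X)$, and the induction reducing to a shifted discrete sheaf $\cE \in \Cohh(X)$ are all correct, as is the identification of $p_*\cE$ with the pushforward along the underived morphism $\trunc(p)$.

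The problem is in the discrete case itself, which you rightly single out as the crux. The paper disposes of it with a single citation: $p_*(\pi_j(\cF)) \in \Cohb(Y)$ by \cite[Theorem 5.20]{Porta_Yu_Higher_analytic_stacks_2014}, i.e.\ the finiteness theorem for proper morphisms of (underived) $k$-analytic stacks, which you correctly name. But your proposed justification of that input does not work: Kiehl's theorem in \cite[\S 9.4.3]{Bosch_Non-archimedean_1984} is the coherent-module correspondence on affinoid spaces (the ingredient feeding into \cref{thm:derived_Kiehl} and the descent arguments), and no amount of étale descent or reduction to affinoid $Y$ upgrades it to coherence of the higher direct images $R^j\trunc(p)_*(\cE)$ along a proper map. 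That statement is Kiehl's proper mapping theorem (the Endlichkeitssatz), a separate and substantially deeper result; its stacky form, obtained by cohomological descent along an atlas, is precisely the cited Theorem 5.20. So your proof is complete only if the discrete step is taken as a citation of that finiteness theorem rather than derived from the affinoid theory; as written, the derivation of the key step fails.
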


\begin{proof}
	Let $\cF \in \Coh^+(X)$.
	The hypothesis guarantees the convergence of the spectral sequence
	\[ E^{ij}_2 = \pi_i\left( p_*( \pi_j(\cF) ) \right) \Longrightarrow \pi_{i+j} \left( p_* \cF \right) . \]
	This implies that if $\cF \in \Coh^{\ge m}(X)$, then $\pi_i(p_*(\cF)) = 0$ for $i < m - n$.
	Furthermore, since $p_*(\pi_j(\cF)) \in \Cohb(Y)$  by \cite[Theorem 5.20]{Porta_Yu_Higher_analytic_stacks_2014}, we see that each $\pi_i( p_*( \pi_j(\cF) ) )$ is coherent.
	It follows that $p_*(\cF) \in \Coh^+(X)$.
\end{proof}

\begin{lem}
	Let $p \colon X \to Y$ be a morphism of derived \kanal stacks.
	\begin{enumerate}
		\item The morphism $p$ has finite coherent cohomological dimension less than $n$ if and only if for every $\cF \in \Cohh(X)$, every $i < -n$, we have $\pi_i(\rR p_* \cF) = 0$.
		\item If $p$ is representable by derived \kanal spaces whose truncations are \kanal spaces, then $p$ has finite coherent cohomological dimension.
	\end{enumerate}
\end{lem}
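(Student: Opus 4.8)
The plan is to handle the two parts in turn and to reduce the second to the first. For part~(1), the forward implication is immediate: since $\Cohh(X) \subseteq \Coh^{\ge 0}(X)$ and $\rR p_*$ is just the derived pushforward $p_*$ of \cref{def:coherent_cohomological_dimension}, finite coherent cohomological dimension less than $n$ restricts to the stated vanishing on the heart. The content is the converse, for which I would fix $\cF \in \Coh^{\ge 0}(X)$ and propagate the vanishing from the homotopy sheaves $\pi_j(\cF) \in \Cohh(X)$ to $p_* \cF$ via the homotopy spectral sequence $E^{ij}_2 = \pi_i(p_* \pi_j(\cF)) \Rightarrow \pi_{i+j}(p_* \cF)$. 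Granting the hypothesis, every entry with $i < -n$ vanishes, and because $\cF$ is connective only $j \ge 0$ contribute; on the antidiagonal $i + j = k < -n$ one has $i \le k < -n$, so all entries vanish and $\pi_k(p_* \cF) = 0$.

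The step I expect to demand the most care is the convergence of this spectral sequence, precisely because I cannot borrow it from the preceding lemma: that lemma presupposes finite coherent cohomological dimension, which is exactly what I am trying to establish here. I would therefore argue convergence directly. Using that $\cO_X\Mod$ is a category of hypercomplete sheaves, write $\cF \simeq \lim_m \tau_{\le m} \cF$; as a right adjoint, $p_*$ commutes with this limit, so $p_* \cF \simeq \lim_m p_*(\tau_{\le m} \cF)$. The fibers of the tower are the shifts $p_*(\pi_m(\cF))[m]$, whose homotopy lives in degrees $\ge m - n$ by hypothesis. An induction along the associated fiber sequences then yields $\pi_i(p_*(\tau_{\le m}\cF)) = 0$ for all $i < -n$, and shows that the transition map is an isomorphism in degree $i$ once $m > i + n$. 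Hence the tower is degreewise eventually constant, the relevant $\lim^1$ terms vanish, and $\pi_i(p_* \cF) = \lim_m \pi_i(p_*(\tau_{\le m}\cF))$ is $0$ for $i < -n$; this simultaneously produces the convergence and the desired bound.

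For part~(2), the plan is to localize, reduce to part~(1), and then to a classical statement. Since vanishing of the homotopy sheaves $\pi_i(p_* \cF)$ may be tested étale-locally on $Y$, I would first assume $Y$ derived affinoid; representability then makes $X$ a derived \kanal space whose truncation $\trunc(X)$ is a \kanal space. By part~(1) it suffices to bound $\pi_i(p_* \cF)$ for discrete $\cF \in \Cohh(X)$, and for such $\cF$ both the underlying \inftopos and the sheaf are classical, so $p_* \cF$ computes the ordinary higher direct images of $\cF$ along the morphism of \kanal spaces $\trunc(p)$. I would conclude by choosing a finite affinoid covering of $\trunc(X)$ and combining Tate's acyclicity theorem (higher coherent cohomology of an affinoid vanishes) with the \v{C}ech-to-derived-functor spectral sequence, which bounds the cohomological dimension by the length of the cover. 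The main obstacle is obtaining a \emph{uniform} finite bound: this rests on $\trunc(X)$ admitting such a finite affinoid cover over the affinoid base, which is where quasi-compactness of the representable source must be used.
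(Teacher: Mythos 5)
A preliminary remark: the paper states this lemma \emph{without proof}, so there is no authors' argument to measure yours against; what follows assesses your proposal on its own terms.

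Your part (1) is correct and complete. The forward implication is indeed immediate, and you correctly identify that the only real issue in the converse is convergence, which cannot be quoted from the surrounding results since they presuppose the very finiteness being established. Your substitute is sound: $\cF \simeq \lim_m \tau_{\le m}\cF$ holds because $\cO_X\Mod$ is a category of hypercomplete sheaves (the same device as in the proof of \cref{thm:derived_Tate_acyclicity}); $p_*$ preserves limits; and the long exact sequences of the fiber sequences $p_*(\pi_m(\cF))[m] \to p_*(\tau_{\le m}\cF) \to p_*(\tau_{\le m-1}\cF)$ show both that $\pi_i(p_*(\tau_{\le m}\cF))$ vanishes for $i<-n$ and that the towers are degreewise eventually constant, killing $\lim^1$. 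Note that this argument subsumes the spectral-sequence discussion of your first paragraph, which can simply be deleted.

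Part (2) contains a genuine gap, which you point at but do not close. After localizing to $Y$ derived affinoid and reducing, via part (1) and the equivalence $\Cohh(X)\simeq\Cohh(\trunc(X))$, to classical higher direct images along $\trunc(p)$, your \v{C}ech/Tate argument requires $\trunc(X)$ to admit a \emph{finite} affinoid covering whose intersections are again quasi-compact, i.e.\ it requires $\trunc(X)$ to be quasi-compact and quasi-separated over the affinoid base. You write that this ``is where quasi-compactness of the representable source must be used,'' but representability does not supply quasi-compactness: a rigid \kanal space need not be quasi-compact, and the statement is actually false without such a hypothesis. Take $Y=\Sp k$ and $X=\coprod_{m\ge 1} X_m$ with $X_m$ the analytification of $\bbP^m_k$; then $X\to \Sp k$ is representable by a derived \kanal space whose truncation is a rigid \kanal space, yet the coherent sheaf restricting to $\Omega^m$ on each $X_m$ has $H^m\neq 0$ for every $m$, so the coherent cohomological dimension is infinite. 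The lemma therefore tacitly carries a quasi-compactness (and quasi-separatedness) assumption --- harmless for the paper, since it is only needed for proper representable morphisms such as $\pi$ in \cref{lem:map_cotangent_complex}, and properness includes quasi-compactness --- and your proof must either add that assumption explicitly or it cannot be completed as written.
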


\begin{defin}
	An $\infty$-site $(\cC, \tau)$ is said to be \emph{quasi-compact} if for every $U \in \cC$ and every $\tau$-covering $\{U_i \to U\}_{i \in I}$ there exists a finite subset $I' \subset I$ such that $\{U_i \to U\}_{i \in I'}$ is again a $\tau$-covering.
\end{defin}

\begin{lem} \label{lem:qcompact_site}
	Let $(\cC, \tau)$ be a quasi-compact $\infty$-site.
	Then the inclusion
	\[ i_\cC \colon \Sh_{\DAb^{\le 0}}(\cC, \tau) \hookrightarrow \PSh_{\DAb^{\le 0}}(\cC) \]
	commutes with filtered colimits.
\end{lem}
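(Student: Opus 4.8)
The plan is to exploit that $i_\cC$ is the right adjoint of the sheafification functor $L \colon \PSh_{\DAb^{\le 0}}(\cC) \to \Sh_{\DAb^{\le 0}}(\cC,\tau)$. Since $\Sh_{\DAb^{\le 0}}(\cC,\tau)$ is a reflective localization, the colimit of a filtered diagram $\{F_\alpha\}$ of sheaves is computed by applying $L$ to its colimit $G \coloneqq \colim_\alpha i_\cC(F_\alpha)$ in presheaves, and the unit $G \to i_\cC L(G)$ is an equivalence precisely when $G$ is already a sheaf. Thus $i_\cC$ preserves filtered colimits if and only if every presheaf-level filtered colimit of sheaves is again a sheaf, and this is the statement I would establish. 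First I would invoke quasi-compactness to reduce the sheaf condition to \v{C}ech descent along finite covers: since every $\tau$-covering admits a finite subcovering, the topology is generated by finite covering families, so a presheaf is a sheaf as soon as it satisfies descent with respect to the \v{C}ech nerve of each finite cover $\{U_i \to U\}_{i \in I'}$.

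Fix such a finite cover and write $\Cech^\bullet(H)$ for the associated cosimplicial object attached to a presheaf $H$, so that $H$ satisfies descent for this cover exactly when the canonical map $H(U) \to \lim_{\mathbf\Delta} \Cech^\bullet(H)$ is an equivalence. Because the cover is finite, each cosimplicial level $\Cech^s(H)$ is a finite product of values of $H$, and finite products commute with filtered colimits in $\DAb^{\le 0}$; hence $\Cech^s(G) \simeq \colim_\alpha \Cech^s(F_\alpha)$ for every $s$. Using that colimits of presheaves are computed objectwise and that each $F_\alpha$ is a sheaf, $G(U) \simeq \colim_\alpha F_\alpha(U) \simeq \colim_\alpha \lim_{\mathbf\Delta} \Cech^\bullet(F_\alpha)$. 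At this point the entire statement reduces to commuting the filtered colimit $\colim_\alpha$ past the totalization $\lim_{\mathbf\Delta}$ of the cosimplicial objects $\Cech^\bullet(F_\alpha)$.

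This interchange of a filtered colimit with an infinite totalization is the heart of the matter, and it is where coconnectivity enters. I would analyze the totalization through the spectral sequence associated to the cosimplicial object (the dual of the one used in the proof of \cref{thm:derived_Tate_acyclicity}), whose first page is
\[ E_1^{s,t} = \pi_t\big(\Cech^s(F_\alpha)\big) \Longrightarrow \pi_{t-s}\Big(\lim_{\mathbf\Delta}\Cech^\bullet(F_\alpha)\Big). \]
Since every value lies in $\DAb^{\le 0}$ we have $\pi_t = 0$ for $t > 0$, while $s \ge 0$; hence for each fixed total degree $n = t - s$ only the finitely many columns $0 \le s \le -n$ contribute. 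In other words the spectral sequence meets each total degree in finitely many spots, so it converges and each $\pi_n(\lim_{\mathbf\Delta})$ carries a finite filtration. Filtered colimits in $\DAb$ are exact and commute both with the formation of homotopy groups and with finite products, so passing to the colimit over $\alpha$ identifies the colimit of these spectral sequences with the corresponding spectral sequence for $G$; as the filtrations are finite in each degree this yields
\[ \colim_\alpha \pi_n\Big(\lim_{\mathbf\Delta}\Cech^\bullet(F_\alpha)\Big) \simeq \pi_n\Big(\lim_{\mathbf\Delta}\Cech^\bullet(G)\Big) \]
for all $n$. Combined with the previous step this gives $\pi_n(G(U)) \simeq \pi_n(\lim_{\mathbf\Delta}\Cech^\bullet(G))$ for every $n$, so the canonical map $G(U) \to \lim_{\mathbf\Delta}\Cech^\bullet(G)$ is a quasi-isomorphism and $G$ is a sheaf. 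The only genuine obstacle is this colimit--totalization interchange: the coconnectivity hypothesis $\DAb^{\le 0}$ is exactly what makes the descent spectral sequence degreewise finite and legitimizes the interchange, while quasi-compactness is what reduces to finite covers so that the cosimplicial levels themselves commute with the colimit.
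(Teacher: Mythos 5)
Your proposal is correct and follows essentially the same route as the paper: reduce (via the sheafification adjunction, which the paper leaves implicit) to showing that a presheaf-level filtered colimit of sheaves still satisfies descent, use quasi-compactness to pass to finite covers so that the Čech levels commute with the colimit, and then exploit coconnectivity to push the filtered colimit through the totalization over $\mathbf \Delta$. The only difference is packaging: you run the cosimplicial spectral sequence of \cite[1.2.2.14]{Lurie_Higher_algebra} directly and commute the filtered colimit with its pages and its degreewise-finite filtration, whereas the paper applies $\tau_{\ge m}$, uses that same spectral sequence only to replace $\lim_{\mathbf \Delta}$ by a finite partial totalization $\lim_{\mathbf \Delta^{\le l}}$, and then concludes from the $t$-exactness of filtered colimits and their commutation with finite limits.
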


\begin{proof}
	Let $F \colon J \to \PSh_{\cT}(\cC)$ be a filtered diagram and assume that for each $\alpha \in J$ the presheaf $\cF_\alpha \coloneqq F(\alpha)$ is a sheaf.
	Let $\cF$ denote the colimit of the diagram $F$, computed in $\PSh_{\DAb^{\le 0}}(\cC)$.
	To prove the lemma, it is enough to check that $\cF$ is a sheaf.
	
	Let $U \in \cC$ be a fixed object and let $\{U_i \to U\}_{i \in I}$ be a $\tau$-covering.
	Since $(\cC, \tau)$ is quasi-compact, we can assume without loss of generality that $I$ is finite.
	Let $U^\bullet$ denote the \v{C}ech nerve of this covering.
	We have to prove that the canonical map
	\[ \cF(U) \longrightarrow \lim_{n \in \mathbf \Delta} \cF(U^n) \]
	is an equivalence.
	
	To see this, it is sufficient to verify that for every $m \le 0$, the induced map
	\[ \tau_{\ge m} \cF(U) \longrightarrow \tau_{\ge m} \left( \lim_{n \in \mathbf \Delta} \cF(U^n) \right) \simeq \lim_{n \in \mathbf \Delta} \tau_{\ge m} \cF(U^n) \]
	is an equivalence.
	The spectral sequence of \cite[1.2.2.14]{Lurie_Higher_algebra} implies that the canonical map
	\[ \lim_{n \in \mathbf \Delta} \tau_{\ge m} \cF(U^n) \longrightarrow \lim_{n \in \mathbf \Delta^{\le n+2}} \tau_{\ge m} \cF(U^n) \]
	is an equivalence.
	The statement now follows because filtered colimits are $t$-exact and commute with finite limits.
\end{proof}

\begin{lem} \label{lem:proper_commutes_with_filtered}
	Let $p \colon X \to Y$ be a proper morphism of derived \kanal stacks.
	Assume that $p$ has finite coherent cohomological dimension.
	Let $F \colon J \to \Coh^{\ge 0}(X)$ be a filtered diagram.
	Assume that the colimit of $F$ exists in $\Coh^{\ge 0}(X)$ and that it is preserved by the inclusion $\Coh^{\ge 0}(X) \hookrightarrow \cO_X \Mod$.
	Then the canonical morphism
	\[ \colim_J p_* \circ F \longrightarrow p_*( \colim_J F ) \]
	is an equivalence.
\end{lem}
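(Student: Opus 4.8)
The plan is to verify the map on homotopy sheaves and to exploit the finite coherent cohomological dimension of $p$ to reduce to truncated coherent sheaves, where the pushforward is computed by a finite Čech limit and \cref{lem:qcompact_site} applies.

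First I would reduce to the case where $Y$ is a derived $k$-affinoid space. A morphism in $\cO_Y\Mod$ is an equivalence precisely when it becomes one after evaluation on every smooth $V \to Y$ with $V$ derived affinoid; and by the very definition of $p_*$ on the slice sites, evaluating $p_* \cH$ on such a $V$ yields the $V$-sections of the pushforward of $\cH|_{X_V}$ along the induced map $p_V \colon X \times_Y V \to V$. Thus it suffices to treat $Y = V$. Restriction along the smooth map $X_V \to X$ is $t$-exact, preserves coherence, and preserves the filtered colimit $\cF \coloneqq \colim_J F$ together with its computation in $\cO_{X_V}\Mod$, so the hypotheses are inherited. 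With $Y$ affinoid and $A \coloneqq \Gamma(\cO_Y\alg)$, \cref{thm:derived_Kiehl} identifies $\Gamma_Y \circ p_*$ with the derived global section functor $\rR\Gamma(X,-) \colon \Coh^{\ge 0}(X) \to A\Mod$, and the claim becomes that $\rR\Gamma(X,-)$ commutes with the filtered colimit $\cF = \colim_J F$.

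Next I would reduce to truncated sheaves. Since filtered colimits in $\cO_Y\Mod$ are $t$-exact, it is enough to show $\colim_J \pi_d(p_* F(\alpha)) \to \pi_d(p_* \cF)$ is an isomorphism for every $d$. The spectral sequence $E_2^{ij} = \pi_i(p_*(\pi_j \cG)) \Rightarrow \pi_{i+j}(p_* \cG)$, together with the vanishing $\pi_i(p_*(-)) = 0$ for $i < -n$ furnished by \cref{def:coherent_cohomological_dimension}, shows that for $\cG \in \Coh^{\ge 0}(X)$ the truncation map $p_* \cG \to p_*(\tau_{\le m}\cG)$ is an isomorphism on $\pi_d$ as soon as $m \ge d + n$: the fiber lies in $\Coh^{\ge m+1}$, so its pushforward lies in $\Coh^{\ge m+1-n} \subseteq \Coh^{\ge d+1}$. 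Applying this to each $F(\alpha)$ and to $\cF$, and using that truncation commutes with filtered colimits, I reduce the claim to the assertion that $\rR\Gamma(X,-)$ commutes with filtered colimits on the bounded subcategory $\Coh^{[0,m]}(X)$.

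Finally, the core step. Because $p$ is proper and $Y$ is affinoid, $X$ admits a finite cover by derived $k$-affinoids $\{U_i\}$ with affinoid finite intersections, so that $\rR\Gamma(X, \cG) \simeq \lim_{\mathbf\Delta} \rR\Gamma(U^\bullet, \cG)$ for the associated Čech nerve, computed on a \emph{quasi-compact} site. On each affinoid term $\rR\Gamma(U^\sigma,-)$ commutes with the filtered colimit by \cref{thm:derived_Tate_acyclicity}, since global sections of coherent sheaves are computed degreewise in $A_\sigma\Mod$ and the colimit remains coherent by hypothesis. To pass the colimit through the totalization I would argue exactly as in the proof of \cref{lem:qcompact_site}: fixing a homotopy degree and truncating, the limit over $\mathbf\Delta$ is computed over a finite subcategory $\mathbf\Delta^{\le N}$, hence is a finite limit, and filtered colimits are $t$-exact and commute with finite limits. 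The main obstacle is precisely this interchange of the unbounded cosimplicial limit with the filtered colimit, and it is overcome by combining the finiteness of the cover (quasi-compactness coming from properness) with the finite coherent cohomological dimension, which together bound the range of cohomological and cosimplicial degrees contributing to any fixed $\pi_d$.
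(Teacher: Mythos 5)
Your first two reductions (localizing to $Y$ derived affinoid, then using the finite coherent cohomological dimension to replace each $F(\alpha)$ and the colimit by their $m$-truncations) are correct and essentially identical to the paper's own proof. The problem is in your core step. You justify the termwise commutation, $\colim_\alpha \Gamma(U^\sigma, \cF_\alpha) \xrightarrow{\sim} \Gamma(U^\sigma, \colim_\alpha \cF_\alpha)$, by appealing to \cref{thm:derived_Tate_acyclicity} ``since global sections of coherent sheaves are computed degreewise and the colimit remains coherent by hypothesis.'' This inference does not go through. The filtered colimit of the $\cF_\alpha$ in $\cO_X\Mod$ is the \emph{sheafification} of the presheaf colimit, and Tate acyclicity (or \cref{thm:derived_Kiehl}) is silent about whether sheafification changes sections. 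What the Kiehl equivalence does give you is that the coherent sheaf $\colim \cF_\alpha$ corresponds to the colimit of the $\Gamma(\cF_\alpha)$ computed \emph{inside} $\Coh(A_\sigma)$; but the map you need to invert compares it with the colimit computed in $A_\sigma\Mod$, and the inclusion $\Coh(A_\sigma) \hookrightarrow A_\sigma\Mod$ does not preserve filtered colimits. Closing this gap requires exactly the quasi-compactness input of \cref{lem:qcompact_site}: one must know that the relevant site is quasi-compact, so that the presheaf colimit of sheaves is already a sheaf and no sheafification occurs. The same unproven interchange is hidden in your earlier sentence reducing the sheaf-level claim on $Y$ to the claim that $\rR\Gamma(X,-)$ commutes with the colimit: that step needs $\Gamma_Y$ to commute with the filtered colimit of the sheaves $p_*\cF_\alpha$ (whose coherence is not known a priori), which is again the quasi-compactness point, not a consequence of \cref{thm:derived_Kiehl}.

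This is precisely where the paper's proof is structured differently: after the same two reductions, it introduces the site $(\Geom^{\mathrm{qc}}_{/X})_{\bPsm}$ of \emph{quasi-compact} geometric stacks smooth over $X$ (properness of $p$ is what makes pullback along $p$ a morphism of such sites), observes via \cref{lem:qcompact_site} that on these quasi-compact sites the inclusion of sheaves into presheaves commutes with filtered colimits, and then concludes at once because the presheaf-level pushforward is a precomposition and commutes with \emph{all} colimits. No hypercovering, no termwise statement, and no $\mathbf\Delta$-interchange is needed; your appeal to \cref{prop:n_cofinal_colimit} and \cref{cor:Delta_cofinal} solves a genuine but secondary difficulty while leaving the primary one (sheafification versus sections) unaddressed. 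Two further, more minor points: since $X$ is a derived $k$-analytic \emph{stack}, a finite affinoid cover with affinoid intersections need not exist; one must use an étale hypercovering whose levels are finite disjoint unions of derived affinoids, as the paper does in its other proofs. And to invoke quasi-compactness of the affinoid or levelwise sites at all, one needs the restriction argument the paper carries out with $(\Geom^{\mathrm{qc}}_{/X})_{\bPsm}$; so a repaired version of your argument essentially converges to the paper's.
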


\begin{proof}
	The question is local on $Y$, so we can assume that $Y$ is derived affinoid.
	Since filtered colimits are $t$-exact, they commute with Postnikov towers (see the proof of \cite[Lemma 7.2]{Porta_Yu_Higher_analytic_stacks_2014}).
	In particular, we have
	\[ p_*\big(\colim_{\alpha \in J} \cF_\alpha\big) \simeq p_*\big(\colim_{\alpha \in J}\big( \lim_{n \ge 0} \tau_{\le n} \cF_\alpha\big)\big) \simeq \lim_{n \ge 0} p_*\big(\colim_{\alpha \in J} \tau_{\le n} \cF_\alpha\big) . \]
	On the other hand, consider the map
	\[ \colim_{\alpha \in J} p_*(\cF_\alpha) \simeq \colim_{\alpha \in J} \big( \lim_{n \ge 0} p_*( \tau_{\le n} \cF_\alpha ) \big) \xrightarrow{\phi} \lim_{n \ge 0} \colim_{\alpha \in J} p_*(\tau_{\le n} \cF_\alpha) . \]
	We claim that $\phi$ is an equivalence.
	
	Choose an integer $m \ge 0$ such that for every $\cG \in \Coh^{\ge 0}(X)$,
	\[ p_*(\cG) \in \Coh^{\ge m}(Y) . \]
	Then, the descent spectral sequence of \cite[Theorem 8.8]{Porta_Yu_Higher_analytic_stacks_2014} guarantees that for every $\alpha \in J$, the canonical map
	\[ \pi_i\left( p_*( \tau_{\le n + 1} \cF_\alpha ) \right) \to \pi_i \left( p_*( \tau_{\le n} \cF_\alpha) \right) \]
	is an isomorphism for every $i < n-m$.
	Moreover, since filtered colimits are $t$-exact, they commute with homotopy groups.
	It follows that the map
	\[ \pi_i\left( \colim_{\alpha \in J} p_*( \tau_{\le n + 1} \cF_\alpha ) \right) \to \pi_i \left( \colim_{\alpha \in J} p_*( \tau_{\le n} \cF_\alpha) \right) \]
	is an equivalence as well.
	This implies that $\phi$ induces an equivalence on each $\pi_i$ and hence it is an equivalence.
	
	We are therefore reduced to prove that for every $n \ge 0$, the map
	\[ \colim_{\alpha \in J} p_*( \tau_{\le n} \cF_\alpha) \longrightarrow p_*\big( \colim_{\alpha \in J} \tau_{\le n} \cF_\alpha\big) \]
	is an equivalence.
	In other words, we can assume from the very beginning that $F$ takes values in $\Coh^{\ge 0}(X) \cap \Coh^{\le n}(Y)$ for some $n \ge 0$.
	In this case, the hypothesis guarantees that $p_* \circ F$ takes values in $\Coh^{\le n}(Y) \cap \Coh^{\ge -m}(Y)$.
	
	Let us denote by $(\Geom_{/X}^{\mathrm{qc}})_{\bP_\mathrm{sm}}$ the full subcategory of the overcategory $\St(\dAfd_k,\allowbreak \tauet)_{/X}$ spanned by smooth morphisms from quasi-compact geometric stacks.
	The topology $\tauet$ restricts to a quasi-compact Grothendieck topology on $(\Geom_{/X}^{\mathrm{qc}})_{\bP_\mathrm{sm}}$.
	Notice that the inclusion $((\dAfd_{/X})_{\bP_{\mathrm{sm}}}, \tauet) \hookrightarrow ((\Geom_{/X}^{\mathrm{qc}})_{\bP_{\mathrm{sm}}}, \tauet)$ induces an equivalence of $\infty$-categories of sheaves by \cite[Proposition 2.22]{Porta_Yu_Higher_analytic_stacks_2014}.
	
	Since the colimit of $F$ can be computed in $\cO_X \Mod$, we can simply consider the sheaves $\cF_\alpha$ as sheaves on $(\Geom_{/X}^{\mathrm{qc}})_{\bP_\mathrm{sm}}$ with values in $\DAb^{\le n}$.
	Similarly, we can consider the sheaves $p_*(\cF_\alpha)$ as sheaves on $(\Geom_{/Y}^{\mathrm{qc}})_{\bP_\mathrm{sm}}$ with values in $\DAb^{\le n}$.
	Since $p$ is proper, pulling back along $p$ produces a morphism of sites
	\[\big((\Geom_{/Y}^{\mathrm{qc}})_{\bP_\mathrm{sm}}, \tauet\big) \to \big((\Geom_{/X}^{\mathrm{qc}})_{\bP_\mathrm{sm}}, \tauet\big) . \]
	In turn, this induces the following commutative diagram:
	\[ \begin{tikzcd}
		\Sh_{\DAb^{\le n}}\big((\Geom_{/X}^{\mathrm{qc}})_{\bP_\mathrm{sm}}, \tauet\big) \arrow{r} \arrow{d} & \PSh_{\DAb^{\le n}}\big((\Geom_{/X}^{\mathrm{qc}})_{\bP_\mathrm{sm}}\big) \arrow{d} \\
		\Sh_{\DAb^{\le n}}\big((\Geom_{/Y}^{\mathrm{qc}})_{\bP_\mathrm{sm}}, \tauet\big) \arrow{r} & \PSh_{\DAb^{\le n}}\big((\Geom_{/Y}^{\mathrm{qc}})_{\bP_\mathrm{sm}}\big) .
	\end{tikzcd} \]
	Since the two sites are quasi-compact, \cref{lem:qcompact_site} implies that the horizontal morphisms commute with filtered colimits.
	Since the right vertical morphism commutes with arbitrary colimits, we conclude that the left vertical morphism commutes with filtered colimits as well.
\end{proof}

\begin{thm} \label{thm:projection_formula}
	Let $p \colon X \to Y$ be a morphism of derived \kanal stacks.
	Assume that $p$ is proper and has finite coherent cohomological dimension.
	Then for any $\cF \in \Coh^+(X)$ and $\cG \in \Coh^+(Y)$, the canonical map
	\[ \eta_{\cF, \cG} \colon p_*( \cF ) \otimes_{\cO_Y} \cG \longrightarrow p_*( \cF \otimes_{\cO_X} p^*(\cG) ) \]
	is an equivalence.
\end{thm}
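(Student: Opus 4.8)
The plan is to prove the statement by reducing, through a free simplicial resolution, to the trivial case $\cG = \cO_Y$, the only genuine work being that the functor $\cG \mapsto p_*(\cF \otimes_{\cO_X} p^*(\cG))$ commutes with the relevant colimit. First I would record that $\eta_{\cF,\cG}$ is natural in both variables and compatible with shifts: since $p_*(\cF[a]) \simeq p_*(\cF)[a]$, $p^*(\cG[b]) \simeq p^*(\cG)[b]$, and tensor products commute with shifts, one has $\eta_{\cF[a],\cG[b]} \simeq \eta_{\cF,\cG}[a+b]$, so it suffices to treat $\cF \in \Coh^{\ge 0}(X)$ and $\cG \in \Coh^{\ge 0}(Y)$. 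Next I would observe that the assertion is local on $Y$ for the smooth topology: the formation of $p_*(\cF) \otimes_{\cO_Y} \cG$, of $p_*(\cF \otimes_{\cO_X} p^*(\cG))$, and of the map between them commutes with restriction along a smooth atlas $Y' \to Y$ (this is the one place where a base-change statement for $p_*$ is needed), so by descent we may assume $Y$ is derived affinoid.

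With $Y$ derived affinoid I would invoke \cref{cor:derived_affinoid_presentation_of_coherent} to choose a simplicial object $\cP^\bullet \in \Fun(\mathbf \Delta\op, \Coh^+(Y))$ with each $\cP^n$ free of finite rank and $|\cP^\bullet| \simeq \cG$. For $\cG$ free of finite rank the map $\eta_{\cF,\cG}$ is an equivalence: when $\cG = \cO_Y$ we have $p^*(\cO_Y) \simeq \cO_X$ and $\cF \otimes_{\cO_X} \cO_X \simeq \cF$, so both sides equal $p_*(\cF)$ and $\eta$ is the identity, and the general finite free case follows by additivity. Hence $\eta_{\cF,\cP^n}$ is an equivalence for every $n$.

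It remains to check that both sides of $\eta$ carry the realization $|\cP^\bullet|$ to a realization, so that $\eta_{\cF,\cG} \simeq |\eta_{\cF,\cP^\bullet}|$ is an equivalence. The left-hand functor $\cG \mapsto p_*(\cF) \otimes_{\cO_Y} \cG$ commutes with all colimits, hence with geometric realizations. For the right-hand functor, since $p^*$ and $\cF \otimes_{\cO_X} (-)$ commute with colimits, the content is that $p_*$ commutes with the geometric realization of $Z^\bullet \coloneqq \cF \otimes_{\cO_X} p^*(\cP^\bullet)$. Here I would use the skeletal filtration: $|Z^\bullet| \simeq \colim_n |\mathrm{sk}_n Z^\bullet|$ is a sequential, hence filtered, colimit whose terms are finite colimits of the $Z^k$. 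Since $\cF$ and the $\cP^n$ are connective, each $Z^k$, each $|\mathrm{sk}_n Z^\bullet|$, and $|Z^\bullet| \simeq \cF \otimes_{\cO_X} p^*(\cG)$ lie in $\Coh^{\ge 0}(X)$, and by \cref{cor:geometric_realization_almost_perfect} this colimit is computed in $\cO_X\Mod$. As $p_*$ is exact it commutes with the finite colimits defining the skeleta, and \cref{lem:proper_commutes_with_filtered} shows it commutes with the filtered colimit $\colim_n |\mathrm{sk}_n Z^\bullet|$; combining the two gives $p_*(|Z^\bullet|) \simeq |p_*(Z^\bullet)|$, as desired.

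The \emph{main obstacle} is exactly this last commutation of $p_*$ with geometric realizations, which is where the hypotheses enter: properness and finite coherent cohomological dimension are used through \cref{lem:proper_commutes_with_filtered} to pass the filtered colimit across $p_*$, while the connectivity normalization guarantees that all intermediate objects stay in $\Coh^{\ge 0}(X)$ so that the lemma applies. A secondary point to verify carefully is the smooth base-change input for $p_*$ used in the reduction to $Y$ derived affinoid.
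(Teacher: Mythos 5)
Your proposal is correct and follows essentially the same route as the paper's proof: localize to $Y$ derived affinoid, resolve $\cG$ by finite free modules via \cref{cor:derived_affinoid_presentation_of_coherent}, show that every functor in sight commutes with the resulting geometric realization using \cref{cor:geometric_realization_almost_perfect}, \cref{lem:proper_commutes_with_filtered} and the exactness of $p_*$, and conclude from the tautological free case. The only differences are expository: you spell out the skeletal-filtration argument (geometric realization as a filtered colimit of finite colimits) and the shift normalization to $\Coh^{\ge 0}$, both of which the paper leaves implicit in the phrase ``since $p_*$ is exact, it commutes with all such colimits.''
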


\begin{proof}
	The statement is local on $Y$, so we can assume that $Y$ is derived affinoid.
	By \cref{cor:geometric_realization_almost_perfect}, the inclusion $\Coh^{\ge 0}(X) \hookrightarrow \cO_X \Mod$ commutes with geometric realizations.
	
	It follows from \cref{lem:proper_commutes_with_filtered} that $p_* \colon \Coh^+(X) \to \Coh^+(Y)$ commutes with all the filtered colimits that exist in $\Coh^{\ge 0}(X)$ and that are preserved by the inclusion $\Coh^{\ge 0}(X) \hookrightarrow \cO_X \Mod$.
	Moreover, since $p_*$ is an exact functor between stable $\infty$-categories, we conclude that it commutes with all the colimits that exist in $\Coh^{\ge 0}(X)$ and that are preserved by $\Coh^{\ge 0}(X) \hookrightarrow \cO_X \Mod$.
	The same goes for $p^* \colon \Coh^-(Y) \to \Coh^-(X)$ and for the functors $\cF \otimes_{\cO_X} -$ and $p_*(\cF) \otimes_{\cO_Y} -$.
	
	Therefore, by \cref{cor:derived_affinoid_presentation_of_coherent}, we are reduced to check that $\eta_{\cF, \cG}$ is an equivalence in the special case where $\cG$ is free of finite rank over $Y$.
	In this case, the statement is tautological.
\end{proof}

\section{Proper base change}

The goal of this section is to prove the proper base change theorem for coherent sheaves on derived \kanal stacks (cf.\ \cref{thm:proper_base_change}).

We start with the following special case, and later deduce more general cases.

\begin{lem} \label{lem:proper_base_change_affinoid}
	Let
	\[ \begin{tikzcd}
		X' \arrow{r}{u} \arrow{d}{g} & X \arrow{d}{f} \\
		Y' \arrow{r}{v} & Y
	\end{tikzcd} \]
	be a pullback square of derived $k$-affinoid spaces.
	Set $A \coloneqq \Gamma(\cO\alg_Y)$, $A' \coloneqq \Gamma(\cO\alg_{Y'})$, $B \coloneqq \Gamma(\cO\alg_X)$, and $B' \coloneqq \Gamma(\cO\alg_{X'})$.
	If $v$ is finite, then for any $\cF \in \Coh^+(X)$, the canonical map
	\[ \Gamma(\cF) \otimes_A A' \to \Gamma(u^*(\cF)) \]
	is an equivalence in $A' \Mod$.
\end{lem}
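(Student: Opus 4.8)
The plan is to mirror the structure of the proof of \cref{thm:projection_formula}: present both sides of $\eta$ as values of functors $\Coh^+(X) \to A' \Mod$, reduce to the case where $\cF$ is free of finite rank by a simplicial resolution, and settle that case directly using \cref{prop:pullback_finite_map}. First I would identify the two sides. By \cref{thm:derived_Kiehl}, the global section functors $\Gamma \colon \Coh(X) \xrightarrow{\sim} \Coh(B)$ and $\Gamma \colon \Coh(X') \xrightarrow{\sim} \Coh(B')$ are equivalences, so the source of $\eta$ is computed by the functor $\cF \mapsto \Gamma(\cF) \otimes_A A'$ and the target by $\cF \mapsto \Gamma(u^*\cF)$. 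The canonical map $\eta$ itself arises from the unit $\cF \to u_* u^*\cF$ together with the identification $\Gamma_X \circ u_* \simeq \Gamma_{X'}$, followed by $A'$-linear extension; this makes $\eta$ a natural transformation between the two functors just described.

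Next I would check that both functors commute with geometric realizations in $\Coh^+(X)$. For the source, $\Gamma$ preserves geometric realizations by \cref{thm:derived_Kiehl} combined with \cref{cor:geometric_realization_almost_perfect}, while $- \otimes_A A'$ preserves all colimits. For the target, $u^*$ preserves colimits and carries free modules of finite rank to free modules of finite rank, so if $\cP^\bullet$ is a simplicial object with each $\cP^n$ free of finite rank, then $u^*\cP^\bullet$ is again a simplicial object of $\Coh^{\ge 0}(X')$; applying $\Gamma$ then preserves the geometric realization by the same two references.

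By \cref{cor:derived_affinoid_presentation_of_coherent}, every $\cF \in \Coh^+(X)$ is the geometric realization of a simplicial object whose terms are free of finite rank, so it suffices to verify that $\eta$ is an equivalence when $\cF = \cO_X^{\oplus r}$. In that case $\Gamma(\cF) \otimes_A A' \simeq (A' \otimes_A B)^{\oplus r}$ and $\Gamma(u^*\cF) \simeq B'^{\oplus r}$, and under these identifications $\eta$ becomes the $r$-fold direct sum of the map $A' \otimes_A B \to B'$. This is exactly the map shown to be an equivalence in \cref{prop:pullback_finite_map}, using the finiteness of $v$, so the free case is immediate.

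I expect the main obstacle to be the verification that the target functor $\cF \mapsto \Gamma(u^*\cF)$ commutes with the relevant geometric realizations, since this packages the colimit-preservation of $u^*$ with the global-section comparison of \cref{thm:derived_Kiehl} and must be justified on the nose rather than invoked formally. The finiteness hypothesis on $v$ enters only at the very last step through \cref{prop:pullback_finite_map}; all the essential work is the colimit and descent bookkeeping needed to carry out the reduction to free sheaves, and no genuinely new geometric input is required once that reduction is in place.
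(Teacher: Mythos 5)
Your proof is correct, and it relies on the same two key ingredients as the paper's --- \cref{thm:derived_Kiehl} and \cref{prop:pullback_finite_map}, with the finiteness of $v$ entering only through the latter --- but it is organized along a genuinely different route. The paper's proof is a direct computation: after identifying $\Coh^+(X) \simeq \Coh^+(B)$ via \cref{thm:derived_Kiehl} and setting $M \coloneqq \Gamma(\cF)$, it writes the chain $M \otimes_A A' \simeq M \otimes_B (B \otimes_A A') \simeq M \otimes_B B'$ and concludes; the compatibility $\Gamma(u^*\cF) \simeq \Gamma(\cF) \otimes_B B'$, i.e.\ the fact that under the Kiehl equivalences for $X$ and $X'$ the functor $u^*$ corresponds to base change along $B \to B'$, is absorbed into the word ``natural'' and left implicit. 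Your d\'evissage via \cref{cor:derived_affinoid_presentation_of_coherent} and \cref{cor:geometric_realization_almost_perfect} is precisely a proof of that implicit compatibility: you reduce to $\cF$ free of finite rank, where both sides visibly become $(A' \otimes_A B)^{\oplus r}$ and $(B')^{\oplus r}$, matched by \cref{prop:pullback_finite_map}. So your argument is longer but fills in a step the paper elides, at the cost of the colimit bookkeeping you correctly identify as the main burden; this is the same template the paper itself uses for \cref{thm:projection_formula}. Two small points you should make explicit to be fully rigorous: the resolutions of \cref{cor:derived_affinoid_presentation_of_coherent} produce connective objects, so you should first reduce from $\Coh^+(X)$ to $\Coh^{\ge 0}(X)$ by a shift, which is legitimate because both of your functors are exact functors of stable $\infty$-categories; and the levelwise comparison only assembles to an equivalence of realizations because $\eta$ is a natural transformation, which your construction via the unit $\cF \to u_* u^*(\cF)$ and the identification $\Gamma_X \circ u_* \simeq \Gamma_{X'}$ does indeed provide.
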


\begin{proof}
	\Cref{prop:pullback_finite_map} implies that there is a canonical equivalence $A' \otimes_A B \simeq B'$ in the $\infty$-category $\CRing_k$.
	On the other hand, \cref{thm:derived_Kiehl} provides an equivalence
	\[ \Coh^+(X) \simeq \Coh^+(B) , \]
	induced by the global section functor.
	Set $M \coloneqq \Gamma(\cF)$.
	Then the chain of natural equivalences
	\[ M \otimes_A A' \simeq M \otimes_B B \otimes_A A' \simeq M \otimes_B B' \]
	implies the statement.
\end{proof}

\begin{lem} \label{lem:finite_map_pushforward}
	Let $f \colon X \to Y$ be a finite morphism of derived \kanal stacks.
	Then $f$ induces a conservative and $t$-exact functor
	\[ f_* \colon \Coh^+(X) \to \Coh^+(Y) . \]
\end{lem}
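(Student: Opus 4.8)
The plan is to reduce to the case where the target $Y$, and hence the source $X$, is derived affinoid, and then to identify $f_*$ with restriction of scalars along the induced map of global functions, for which both assertions become transparent.

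First I would observe that the statement is local on $Y$. Indeed, coherent sheaves satisfy smooth descent, so a morphism in $\Coh^+(Y)$ is an equivalence if and only if its restriction to each member of a smooth affinoid cover $\{v_j\colon V_j\to Y\}$ is one; and the $t$-structure is detected on such a cover, since connectivity and coconnectivity of the homotopy sheaves $\pi_i$ are local conditions. Writing $f_{V_j}\colon X_{V_j}\to V_j$ for the base change of $f$ and $u_j\colon X_{V_j}\to X$ for the projection, flat base change along the smooth (hence flat) map $v_j$ gives
\[ v_j^*\, f_*(\cF) \simeq (f_{V_j})_*\, u_j^*(\cF), \]
so the restriction of $f_*$ to each $V_j$ is again a finite pushforward. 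This reduces everything to $Y$ derived affinoid. In that case $\trunc(X)$ is finite over the affinoid $\trunc(Y)$, hence itself affinoid, so $X$ is derived affinoid as well.

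With $Y$ and $X$ derived affinoid, set $A\coloneqq\Gamma(\cO\alg_Y)$ and $B\coloneqq\Gamma(\cO\alg_X)$, with structural map $\varphi\colon A\to B$. By \cref{thm:derived_Kiehl} the global section functor induces $t$-exact equivalences $\Coh^+(X)\simeq\Coh^+(B)$ and $\Coh^+(Y)\simeq\Coh^+(A)$. Since global sections compose, $\Gamma_Y\circ f_*\simeq\Gamma_X$, so under these equivalences $f_*$ is identified with the functor $\Coh^+(B)\to A\Mod$ that restricts scalars along $\varphi$. This functor does land in $\Coh^+(A)$: because $\trunc(f)$ is finite, $\pi_0(B)$ is finite as a $\pi_0(A)$-module, so any homotopy group of an object of $\Coh^+(B)$, being finite over $\pi_0(B)$, is finite over $\pi_0(A)$ as well (and boundedness below is unchanged). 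In particular this already shows $f_*$ restricts to a functor $\Coh^+(X)\to\Coh^+(Y)$.

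Finally, restriction of scalars along $\varphi$ leaves the underlying object of $\DAb$ unchanged, and hence preserves every homotopy group $\pi_i$. It is therefore $t$-exact, and it reflects equivalences — a map of $B$-modules is an equivalence precisely when it is one on underlying objects — so it is conservative. Transporting back along the equivalences of \cref{thm:derived_Kiehl} and then reassembling over the cover $\{V_j\}$ shows that $f_*\colon\Coh^+(X)\to\Coh^+(Y)$ is $t$-exact and conservative. The main obstacle I anticipate is the reduction step: one must make sure that $f_*$ genuinely commutes with restriction to a smooth affinoid cover, i.e. that flat base change along such a cover is available, so that the clean affinoid computation controls the global functor.
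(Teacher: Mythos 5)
Your outline---localize to the case where $Y$, and hence $X$, is derived affinoid, then conclude via \cref{thm:derived_Kiehl}---is the same as the paper's, and the localization step is sound: the compatibility $(f_*\cF)|_{V_j} \simeq (f_{V_j})_*\bigl(\cF|_{X_{V_j}}\bigr)$ that you flag as the main obstacle holds essentially by definition of the pushforward on the slice sites of \cref{def:coherent_sheaf} (for $U \to V_j$ smooth affinoid, both sides evaluate $\cF$ on $U \times_Y X$), so no flat base change theorem is needed there. The genuine problem is in the affinoid case, where your identification of $f_*$ with restriction of scalars is circular. \Cref{thm:derived_Kiehl} makes $\Gamma_Y$ an equivalence only on $\Coh(Y)$, not on all of $\cO_Y \Mod$; since at this stage you do not yet know that $f_*\cF$ is coherent---that is precisely the first assertion of the lemma---the (correct) computation $\Gamma_Y(f_*\cF) \simeq \Gamma_X(\cF)$ does not determine the sheaf $f_*\cF$, and the inference ``so under these equivalences $f_*$ is identified with restriction of scalars along $\varphi$'' is not available. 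Your subsequent conclusion that $f_*$ lands in $\Coh^+(Y)$ is drawn from that identification, so the argument assumes what it must prove: an object of $\cO_Y \Mod$ is genuinely not determined by its global sections, only a coherent one is.

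What is missing is a section-wise identification of $f_*\cF$ on a basis of the site of $Y$. For $V \to Y$ smooth affinoid one has $(f_*\cF)(V) \simeq \cF(V \times_Y X)$, and \cref{thm:derived_Kiehl} applied on $X$ computes this as $\Gamma\bigl(\cO\alg_{V \times_Y X}\bigr) \otimes_B M$ with $M = \Gamma_X(\cF)$, whereas the coherent sheaf on $Y$ corresponding to $M$ regarded as an $A$-module has sections $\cO_Y(V) \otimes_A M$. Matching the two requires the equivalence $\Gamma\bigl(\cO\alg_{V \times_Y X}\bigr) \simeq \cO_Y(V) \otimes_A B$, which is exactly \cref{prop:pullback_finite_map} applied to the pullback of the finite map $f$ along $V \to Y$; this is a substantive result of the paper (its proof needs flatness of relative Tate algebras), and it is where finiteness of $f$ enters at the sheaf level---for non-finite $f$ the right-hand side would have to be a completed tensor product. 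Once this input is added, your argument closes: $f_*\cF$ is identified with the image of $M$, viewed as an $A$-module, under the inverse Kiehl equivalence, and your module-level observations (restriction of scalars preserves all $\pi_i$ and reflects equivalences; $\pi_0(B)$ finite over $\pi_0(A)$ gives coherence) yield $t$-exactness, conservativity, and preservation of $\Coh^+$. Alternatively, one can obtain coherence of $f_*\cF$ by observing that a finite morphism is proper of coherent cohomological dimension $0$ and running the descent spectral sequence as in the first lemma after \cref{def:coherent_cohomological_dimension}; either way, an ingredient beyond \cref{thm:derived_Kiehl} used as a black box is required.
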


\begin{proof}
	Both statements can be checked locally on $Y$.
	So we can assume that $Y$ is derived affinoid.
	Then $X$ is also derived affinoid.
	In this case, the statement follows directly from \cref{thm:derived_Kiehl}.
\end{proof}

\begin{lem} \label{lem:finite_coherent_cohomological_dimension_pullback}
	Let
	\[ \begin{tikzcd}
	X' \arrow{r}{u} \arrow{d}{g} & X \arrow{d}{f} \\
	Y' \arrow{r}{v} & Y
	\end{tikzcd} \]
	be a pullback square of derived \kanal stacks.
	Assume that $v$ is a finite morphism and $f$ has coherent cohomological dimension less than $d$.
	Then $g$ also has coherent cohomological dimension less than $d$.
\end{lem}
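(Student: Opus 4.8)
The plan is to transfer the cohomological bound for $f$ across the finite pushforward along $v$, using that the two pushforwards $v_*$ and $g_*$ commute appropriately. I would first record two structural facts. Since the square is cartesian and truncation is compatible with pullbacks, $\trunc(u)$ is the base change of the finite morphism $\trunc(v)$, hence finite, so $u$ is finite; by \cref{lem:finite_map_pushforward} the functor $u_* \colon \Coh^+(X') \to \Coh^+(X)$ is then $t$-exact, and in particular carries $\Coh^{\ge 0}(X')$ into $\Coh^{\ge 0}(X)$. Second, because $v$ is finite it is affine, and I claim the pushforward $v_* \colon \cO_{Y'}\Mod \to \cO_Y\Mod$ is $t$-exact and conservative on all $\cO$-modules: on homotopy sheaves one has $\pi_i(v_* \cM) \simeq \trunc(v)_* \pi_i(\cM)$, and $\trunc(v)_*$ is exact and faithful since $\trunc(v)$ is a finite, hence affine, morphism of underived \kanal spaces. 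This is the computation underlying \cref{lem:finite_map_pushforward}, performed without the coherence hypothesis.

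With these in hand, fix $\cF' \in \Coh^{\ge 0}(X')$; I must show $\pi_i(g_* \cF') = 0$ for every $i < -d$. Commutativity of the square gives $v \circ g = f \circ u$, hence a natural equivalence $v_* g_* \simeq f_* u_*$. Applying it to $\cF'$ and using the first fact, $u_*(\cF') \in \Coh^{\ge 0}(X)$, so the coherent cohomological dimension hypothesis on $f$ yields $f_* u_*(\cF') \in \cO_Y\Mod^{\ge -d}$, i.e. $v_*(g_* \cF') \in \cO_Y\Mod^{\ge -d}$. Now I would descend this bound: since $v_*$ is $t$-exact, $\tau_{\le -d-1}(v_* g_* \cF') \simeq v_*(\tau_{\le -d-1} g_* \cF')$, and the left-hand side vanishes by what precedes; conservativity of $v_*$ then forces $\tau_{\le -d-1}(g_* \cF') \simeq 0$, that is, $\pi_i(g_* \cF') = 0$ for all $i < -d$. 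As $\cF'$ was an arbitrary object of $\Coh^{\ge 0}(X')$, this shows that $g$ has coherent cohomological dimension less than $d$.

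The step requiring the most care is the second structural fact. \Cref{lem:finite_map_pushforward} is stated only for bounded-below coherent sheaves, whereas here I must apply $t$-exactness and conservativity of $v_*$ to the object $g_* \cF'$, which is \emph{not} known a priori to be coherent — and one cannot simply assume it, since bounding $g_* \cF'$ is essentially the content of the lemma being proved. The resolution is that both properties are really statements about the underived affine pushforward $\trunc(v)_*$ on the hearts, which hold for arbitrary modules; alternatively, one may first localize on $Y$ so that $Y$, and hence its finite cover $Y'$, become derived affinoid, reducing $v_*$ to restriction of scalars along the module-finite map $\Gamma(\cO_Y\alg) \to \Gamma(\cO_{Y'}\alg)$, for which exactness and faithfulness are transparent.
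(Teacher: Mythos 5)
Your proof follows essentially the same route as the paper's: show $u$ is finite so that $u_*$ carries $\Coh^{\ge 0}(X')$ into $\Coh^{\ge 0}(X)$, use $v_* g_* \simeq f_* u_*$ together with the hypothesis on $f$ to kill $\pi_i(v_* g_*(\cF'))$ for $i < -d$, and then descend the bound via $t$-exactness and conservativity of $v_*$. Your closing paragraph addresses a point the paper's own proof elides — the paper invokes \cref{lem:finite_map_pushforward} for the $t$-exactness and conservativity of $v_*$ even though that lemma concerns only $\Coh^+$, while $g_*(\cF')$ is not known to be coherent — so your explicit patch (reducing to the underived pushforward $\trunc(v)_*$ on arbitrary modules, or localizing to the derived affinoid case) is an improvement in rigor rather than a divergence in method.
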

\begin{proof}
	Assume that $f$ has coherent cohomological dimension less than $d$.
	Let $\cF \in \Coh^{\ge 0}(X')$.
	\Cref{lem:finite_map_pushforward} implies that $u_*(\cF) \in \Coh^{\ge 0}(X)$.
	In particular, for $i < -d$, we have
	\[ \pi_i( v_* g_*(\cF) ) \simeq \pi_i( f_* u_*(\cF) ) \simeq 0 . \]
	The statement now follows from the fact that $v_*$ is conservative and $t$-exact, as guaranteed by \cref{lem:finite_map_pushforward}.
\end{proof}

\begin{lem} \label{lem:proper_base_change_along_finite_maps}
	In the context of \cref{lem:finite_coherent_cohomological_dimension_pullback}, assume furthermore that $f$ is proper.
	Then for any $\cF \in \Coh^+(X)$ the natural map
	\[ v^* f_*(\cF) \to g_* u^*(\cF) \]
	is an equivalence.
\end{lem}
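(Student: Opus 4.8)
The plan is to reduce the statement to the affinoid base change of \cref{lem:proper_base_change_affinoid} by descent along a smooth hypercovering; the only genuine difficulty will be the compatibility of the finite pullback $v^*$ with the infinite totalization that computes $f_*$.

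First I would exploit that the assertion is local on $Y$. Covering $Y$ by derived affinoids $Y_i$ and base-changing the whole square, the hypotheses descend ($f$ stays proper of coherent cohomological dimension less than $d$, $v$ stays finite), and since $v$ is finite each $Y'_i=Y'\times_Y Y_i$ is finite over an affinoid, hence again derived affinoid; as $v$ pulls the cover of $Y$ back to a cover of $Y'$, it suffices to treat each $Y_i$. Thus I may assume $Y$, and therefore $Y'$, derived affinoid, and write $A\coloneqq\Gamma(\cO_Y\alg)$, $A'\coloneqq\Gamma(\cO_{Y'}\alg)$. Note that $X$ is in general only a proper stack over $Y$, so I cannot assume it affinoid. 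I would instead choose a smooth hypercovering $X_\bullet\to X$ with each $X_n$ derived affinoid and form $X'_\bullet\coloneqq X_\bullet\times_X X'=X_\bullet\times_Y Y'$; by \cref{prop:pullback_finite_map} each $X'_n$ is again derived affinoid, so $X'_\bullet\to X'$ is a smooth affinoid hypercovering.

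Next, cohomological descent gives $f_*\cF\simeq\lim_{\Delta}(f_n)_*(\cF|_{X_n})$ and $g_*u^*\cF\simeq\lim_{\Delta}(g_n)_*(u^*\cF|_{X'_n})$, where $f_n\colon X_n\to Y$ and $g_n\colon X'_n\to Y'$ are the structure maps, all of affinoids over affinoids, so that each $(f_n)_*$ and $(g_n)_*$ is computed by $\Gamma$ via \cref{thm:derived_Kiehl}. Applying \cref{lem:proper_base_change_affinoid} to the finite base change of each $X_n$ identifies, levelwise and naturally, $v^*(f_n)_*(\cF|_{X_n})$ with $(g_n)_*(u^*\cF|_{X'_n})$; under $\Gamma$ this is the equivalence $\Gamma(\cF|_{X_n})\otimes_A A'\simeq\Gamma(u^*\cF|_{X'_n})$. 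Hence the cosimplicial object computing $g_*u^*\cF$ is obtained from the one computing $f_*\cF$ by applying $v^*$ termwise, and the canonical map $v^*f_*\cF\to g_*u^*\cF$ is exactly the comparison map $v^*\lim_{\Delta}\to\lim_{\Delta}v^*$.

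The main obstacle is precisely this interchange. As a functor $A\Mod\to A'\Mod$, $v^*$ is $-\otimes_A A'$, which is exact and therefore commutes with \emph{finite} limits, but the totalization is an infinite limit and $A'$ need not be flat over $A$, so there is no formal reason for $v^*$ to pass through it. This is where properness and finite coherent cohomological dimension enter: since $f$ (and, by \cref{lem:finite_coherent_cohomological_dimension_pullback}, also $g$) has coherent cohomological dimension less than $d$ and $\cF\in\Coh^+(X)$, the terms $(f_n)_*(\cF|_{X_n})$ lie in $\Coh^{\ge b}(Y)$ for a uniform $b$, and the Bousfield--Kan (descent) spectral sequence for the totalization has a uniform vanishing line. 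Consequently, on every truncation $\tau_{\le N}$ the tower $\{\mathrm{Tot}^{\le p}\}$ stabilizes at a finite stage depending only on $N$ and $b$; on each such finite totalization the exact functor $v^*$ commutes with the limit, and the same vanishing line applies to the termwise-tensored diagram since $v^*$ is right $t$-exact and preserves the bound $b$. Passing to the limit over $N$ in the left-complete $t$-structure then yields $v^*\lim_{\Delta}\simeq\lim_{\Delta}v^*$, and hence the desired equivalence. The one step I expect to demand real care is the bookkeeping of this vanishing line: checking that the stabilization stage is uniform in the cosimplicial degree and compatible with the right $t$-exactness of $v^*$, so that the interchange is legitimate on each homotopy group.
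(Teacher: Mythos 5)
Your reduction steps coincide with the paper's own: localize so that $Y$ and $Y'$ are derived affinoid, take an affinoid hypercovering of $X$, pull it back to $X'$, apply descent on both sides, and use \cref{lem:proper_base_change_affinoid} levelwise. The divergence is at the crux, the interchange of $v^*$ with the totalization, and there your argument has a genuine gap. The paper never confronts convergence of the $\mathrm{Tot}$-tower at all: before invoking descent, it reduces to $\cF \in \Cohb(X)$, using the fiber sequences $\tau_{\ge i-d+1}\cF \to \cF \to \tau_{\le i-d}\cF$, the cohomological dimension bounds for $f$ and (via \cref{lem:finite_coherent_cohomological_dimension_pullback}) for $g$, and the left $t$-exactness of $v^*$ and $u^*$, to show that $\pi_i$ of both sides depends only on $\tau_{\le i-d}\cF$. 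Once $\cF$ is bounded, the two cosimplicial diagrams take values in $\Coh^{[a,b]}(Y')$ for suitable $a,b$, which is a $(b-a)$-category; \cref{prop:n_cofinal_colimit} and \cref{cor:Delta_cofinal} then replace $\lim_{\mathbf \Delta}$ by the \emph{finite} limit $\lim_{\mathbf \Delta_{\le l}}$, through which the exact functor $v^*$ passes formally. This is precisely what the appendix on $n$-cofinality was written for.

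Two steps of your convergence argument are not correct as stated. First, the stabilization of the tower cannot depend ``only on $N$ and $b$'': a uniform lower bound on the terms gives $E_1^{s,t}=0$ for $t<b$, a half-plane which does not prevent arbitrarily high cosimplicial degrees $s$ from contributing to a fixed homotopy group (the layer of $\mathrm{Tot}^p \to \mathrm{Tot}^{p-1}$ reaches down to degree roughly $b-p$). What saves the argument is the \emph{horizontal} vanishing line $E_2^{s,t}=0$ for $s>d$, which does hold --- each $(f_n)_*$ is $t$-exact, so $E_2^{s,t} \cong \pi_{-s}(f_*(\pi_t\cF))$, killed by the cohomological dimension of $f$ --- but since it holds only from the $E_2$-page on (the $E_1$-terms, normalized \v{C}ech cochains, do not vanish for $s>d$), the tower maps are not eventually equivalences, and extracting degreewise pro-constancy requires a Boardman-type strong convergence argument that you assert rather than supply. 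Second, and more seriously, the claim that the termwise-tensored diagram inherits ``the same vanishing line since $v^*$ is right $t$-exact and preserves the bound $b$'' is false: right $t$-exactness reproduces the lower bound but cannot produce a horizontal vanishing line, because $\pi_t(v^*C^n)$ involves Tor-terms mixing all $\pi_{t'}(C^n)$ with $t' \le t$, and the \v{C}ech cohomology of these groups is not formally controlled. The correct justification is the one your own levelwise identification makes available: the tensored diagram \emph{is} the descent diagram computing $g_* u^*(\cF)$, so its vanishing line comes from the coherent cohomological dimension of $g$, i.e.\ from \cref{lem:finite_coherent_cohomological_dimension_pullback}. With these two repairs your route can be completed, but as written the heart of the interchange is unproved and one of its supporting claims is wrong; the paper's reduction to $\Cohb(X)$ plus $n$-cofinality avoids both difficulties.
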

\begin{proof}
	The statement is local on $Y$.
	So we can assume that $Y$ is derived affinoid.
	Then $Y'$ is also derived affinoid.
	It is enough to check that the map $v^* f_*(\cF) \to g_* u^*(\cF)$ induces isomorphisms
	\[ \pi_i( v^*f_*(\cF)  ) \longrightarrow \pi_i( g_* u^*(\cF) ) \]
	for every $i \in \mathbb Z$.
	Fix $i \in \mathbb Z$.
	We claim that there are canonical isomorphisms
	\begin{align}
		\pi_i( v^* f_*(\cF) ) \simeq \pi_i( v^*f_*( \tau_{\le i-d} \cF ) ), \label{eq:proper_base_change_reduction_1}\\
		\pi_i( g_* u^*(\cF) ) \simeq \pi_i( g_* u^*( \tau_{\le i -d} \cF ) ). \label{eq:proper_base_change_reduction_2}
	\end{align}
	We start by showing the isomorphism \eqref{eq:proper_base_change_reduction_1}.
	Consider the fiber sequence
	\begin{equation} \label{eq:truncation_fiber_sequence_proper_base_change}
		\tau_{\ge i-d+1} \cF \to \cF \to \tau_{\le i -d} \cF
	\end{equation}
	in $\Coh^+(X)$.
	Since $f$ has coherent cohomological dimension less than $d$, we have $f_*( \tau_{\ge i-d+1} \cF ) \in \Coh^{\ge i+1}(Y)$.
	By the left $t$-exactness of $v^*$, we get
	\[ v^* f_*( \tau_{\ge i-d+1} \cF ) \in \Coh^{\ge i+1}(Y) . \]
	Finally, since both $f_*$ and $v^*$ are exact functors of stable $\infty$-categories, we have a fiber sequence
	\[ v^* f_*( \tau_{\ge i-d+1} \cF ) \to v^* f_* \cF \to v^* f_*( \tau_{\le i -d} \cF ) \]
	in $\Coh^+(Y')$, which implies the desired result.
	
	Let us now prove the the isomorphism \eqref{eq:proper_base_change_reduction_2}.
	Using once again the fiber sequence \eqref{eq:truncation_fiber_sequence_proper_base_change} and the exactness of $g_*$ and $u^*$, we obtain the fiber sequence
	\[ g_* u^*( \tau_{\ge i-d+1} \cF ) \to g_* u^*(\cF) \to g_* u^*( \tau_{\le i-d} \cF ) \]
	in $\Coh^+(Y')$.
	Since $u^*$ is left $t$-exact, we have $u^*( \tau_{\ge i-d+1} \cF ) \in \Coh^{\ge i -d + 1}(X)$.
	Moreover, since the coherent cohomological dimension of $g_*$ is less than $d$ by \cref{lem:finite_coherent_cohomological_dimension_pullback}, we deduce that
	\[ g_* u^*( \tau_{\ge i-d+1} \cF) \in \Coh^{\ge i+1}(X) . \]
	Therefore, the fiber sequence above implies the isomorphism \eqref{eq:proper_base_change_reduction_2}.
	
	It follows that we can replace $\cF$ by $\tau_{\le m} \cF$ for $m \in \mathbb Z$.
	In other words, we can assume from the beginning that $\cF$ belongs to $\Cohb(X)$.
	Moreover, since $g$ has coherent cohomological dimension less than $d$ by \cref{lem:finite_coherent_cohomological_dimension_pullback}, we also have an isomorphism
	\[ \pi_i( g_* u^*(\cF) ) \simeq \pi_i( g_*(\tau_{\le i-d}u^*(\cF)) ) . \]
	Therefore, it is now sufficient to check that the composition
	\[ v^* f_*( \cF ) \to g_* u^*( \cF ) \to g_* (\tau_{\le i -d} u^*(\cF) ) \]
	induces an isomorphism on $\pi_i$ for every $i \in \mathbb Z$ and every $\cF \in \Cohb(X)$.
	
	At this point, by the properness of $f$, we can find an étale hypercovering $U^\bullet$ of $X$ such that each $U^m$ is a disjoint union of finitely many derived $k$-affinoid spaces.
	Let $V^\bullet \coloneqq X' \times_X U^\bullet$.
	Let $u^n \colon V^n \to U^n$, $f^n \colon U^n \to Y$ and $g^n \colon V^n \to Y'$ be the canonical maps.
	Using \cite[Corollary 8.6]{Porta_Yu_Higher_analytic_stacks_2014} and the fact that each $g^m_*$ is $t$-exact, we obtain equivalences
	\[ f_*( \cF ) \simeq \lim_{m \in \mathbf \Delta} f^m_*( \cF |_{U^m} ), \qquad \pi_i( g_* u^*(\cF) ) \simeq \pi_i \left( \lim_{m \in \mathbf \Delta} g^m_*\big( \tau_{\le i -d} u^*(\cF) |_{V^m} \big)  \right) . \]
	Since $f$ and $g$ have finite coherent cohomological dimension and $\cF \in \Cohb(X)$, the above limits are limits of diagrams taking values in $\Coh^{[a,b]}(Y')$, for suitable $a, b \in \mathbb Z$.
	As this is a $(b-a)$-category, combining \cref{prop:n_cofinal_colimit} and \cref{cor:Delta_cofinal} we can replace $\mathbf \Delta$ by $\mathbf \Delta_{\le l}$ for some $l \gg 0$ in the above cosimplicial diagrams.
	Therefore, by the exactness of $v^*$, we have
	\[ v^* f_*(\cF) \simeq \lim_{m \in \mathbf \Delta_{\le l}} v^* f^m_*(\cF |_{U^m}) . \]
	
	Since each $U^m$ and each $V^m$ is derived affinoid, by \cref{lem:proper_base_change_affinoid}, we have the following canonical equivalence:
	\[ v^* f^m_*(\cF |_{U^m}) \simeq g^m_* u^{m*}( \cF |_{U^m} ) \simeq g^m_* (u^* \cF) |_{V^m} . \]
	Therefore, we obtain
	\begin{align*}
		\pi_i(v^* f_*(\cF)) & \simeq \pi_i \left( \lim_{m \in \mathbf \Delta_{\le l}} g^m_* ( u^* \cF )|_{V^m} \right) \\
		& \simeq \pi_i \left( \lim_{m \in \mathbf \Delta_{\le l}} g_*^m \big( \tau_{\le i -d} u^* (\cF) \big) \right) \\
		& \simeq \pi_i \left( g_* u^*(\cF) \right)
	\end{align*}
	So the proof is complete.
\end{proof}

We are now ready to state and prove the main result of this section.

\begin{thm} \label{thm:proper_base_change}
	Let
	\[ \begin{tikzcd}
	X' \arrow{r}{u} \arrow{d}{g} & X \arrow{d}{f} \\
	Y' \arrow{r}{v} & Y
	\end{tikzcd} \]
	be a pullback square of derived \kanal stacks.
	Assume that $f$ is proper and has finite coherent cohomological dimension.
	Then for any $\cF \in \Coh^+(X)$, the natural morphism
	\[ v^* f_*(\cF) \to g_* u^*(\cF) \]
	is an equivalence.
\end{thm}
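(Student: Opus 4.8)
The plan is to reduce the general statement to the finite case of \cref{lem:proper_base_change_along_finite_maps}. The base-change morphism $v^* f_*(\cF) \to g_* u^*(\cF)$ is a map in $\cO_{Y'}\Mod$, and since $f_*, g_*, u^*, v^*$ together with this transformation are all built via smooth descent, the assertion is local on both $Y$ and $Y'$ for the smooth topology; étale base change in particular is immediate from the étale-local definition of $\cO$-modules. Hence I would first reduce to the case where $Y = \Sp(A)$ and $Y'$ are both derived affinoid. With $Y, Y'$ affinoid I would then factor $v$, using \cite[Lemma 5.46]{Porta_Yu_Representability}, as a closed immersion $v' \colon Y' \hookrightarrow Y \times \bD^n_k$ followed by the projection $p \colon Y \times \bD^n_k \to Y$, exactly as in the proof of \cref{cor:pullback_closed_immersion_global_section}. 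By the pasting property of base-change squares it suffices to treat the two resulting squares separately. The square along $v'$ is a base change along a finite (indeed closed) morphism, so it is covered by \cref{lem:proper_base_change_along_finite_maps}; here one uses that properness is stable under base change and that the pullback $g_1 \colon X \times \bD^n_k \to Y \times \bD^n_k$ of $f$ along $p$ again has finite coherent cohomological dimension, which I expect to follow from the flatness of the relative disk in \cref{prop:relative_disk_flat_II}. This leaves the single case $v = p$.

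The projection case is the heart of the matter, and the main obstacle. The difficulty is precisely the one flagged in the introduction: for a map of affinoids $U = \Sp(C) \to Y$ the pullback $U \times_Y (Y \times \bD^n_k) = U \times \bD^n_k$ has ring of functions the relative Tate algebra $C\langle T_1, \dots, T_n \rangle$, which is the completed and \emph{not} the algebraic tensor product $C \otimes_A A\langle T_1, \dots, T_n \rangle$. Consequently the naive termwise base change along $p$ \emph{fails} on a single affinoid piece of $X$, and the hypercovering argument of \cref{lem:proper_base_change_along_finite_maps} does not transport: in that argument the termwise statement \cref{lem:proper_base_change_affinoid} holds only because \cref{prop:pullback_finite_map} forces the completed tensor product to be algebraic, and that input is unavailable for the non-finite map $p$.

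What rescues the projection case is properness of $f$ used \emph{globally}. By the proper pushforward theorem the complex $N \coloneqq \Gamma(f_*\cF)$ is coherent over the Noetherian affinoid algebra $A$, and for a coherent $A$-module the algebraic and completed tensor products with the flat algebra $A\langle T_1, \dots, T_n \rangle$ coincide. Since $Y \times \bD^n_k$ is affinoid, \cref{thm:derived_Kiehl} reduces the desired equivalence $p^* f_* \cF \simeq g_{1*} u_1^* \cF$ (with $u_1 \colon X \times \bD^n_k \to X$ the projection) to the equivalence of $A\langle T_1, \dots, T_n \rangle$-modules
\[ N \otimes_A A\langle T_1, \dots, T_n \rangle \;\simeq\; \Gamma\big( g_{1*}\, u_1^*\cF \big), \]
that is, to flat base change for the proper morphism $f$ along the relative disk.

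To prove this last equivalence I would argue by induction on the Postnikov tower of the structure sheaf, reducing to the classical situation. For $\cF$ discrete over the underived proper space $\trunc(X)$ the equivalence is the classical flat base-change theorem for coherent cohomology in rigid analytic geometry, where coherence of $N$ over $A$ again identifies the completed tensor product of the rigid theory with the algebraic one. To pass from the $m$-truncated to the $(m+1)$-truncated structure sheaf I would propagate the equivalence as in the proof of \cref{prop:relative_disk_flat_II}: the square-zero description of $\tau_{\le m+1}$ together with \cref{lem:product_preserves_infinitesimal_pushout} and \cref{lem:pullback_split_square_zero_extension} exhibits the relevant homotopy sheaves as flat pullbacks, and the flatness furnished by \cref{prop:relative_disk_flat_II} lets $\otimes_A A\langle T_1, \dots, T_n \rangle$ commute with the cosimplicial limits computing both sides (finite after truncating $\mathbf\Delta$ to $\mathbf\Delta_{\le l}$ via \cref{prop:n_cofinal_colimit} and \cref{cor:Delta_cofinal}). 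The genuinely new input is thus the interplay between properness, which makes $N$ coherent, and the flatness of the relative disk, which converts completed tensor products into algebraic ones; tracking the compatibility of the classical base-change isomorphisms with the Postnikov boundary maps is the routine but bookkeeping-heavy remainder.
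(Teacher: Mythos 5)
Your factorization of $v$ through $Y \times \bD^n_k$ puts all the weight on base change along the projection $p \colon Y \times \bD^n_k \to Y$, and this is where the argument has genuine gaps. First, already to apply \cref{lem:proper_base_change_along_finite_maps} to the closed-immersion square you need the pullback $g_1 \colon X \times \bD^n_k \to Y \times \bD^n_k$ of $f$ to have finite coherent cohomological dimension. You assert this ``follows from the flatness of the relative disk'', but the paper's only tool here, \cref{lem:finite_coherent_cohomological_dimension_pullback}, applies exclusively to \emph{finite} base maps $v$: its proof uses the identity $v_* g_* = f_* u_*$ together with the fact that $u_*$ preserves $\Coh^{\ge 0}$ and is conservative and $t$-exact, and this fails for the projection $u_1 \colon X \times \bD^n_k \to X$, since pushforward along the disk projection does not preserve coherence (so the cohomological-dimension hypothesis on $f$, which quantifies only over coherent sheaves, cannot be fed the sheaf $u_{1*}\cH$). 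Second, your treatment of the projection case reduces, after the Kiehl identification and the Postnikov d\'evissage, to flat base change for the \emph{underived} proper morphism $\trunc(f)$ along $\pi_0(A) \to \pi_0(A)\langle T_1, \ldots, T_n\rangle$. But in the theorem $X$ is a derived \kanal stack, so $\trunc(X)$ is a $k$-analytic stack, and no classical theorem covers that case; nor can it be bootstrapped from the case of rigid spaces by descent along an atlas, for exactly the reason you yourself flag: the affinoid charts of an atlas are not proper over $Y$, and termwise base change along $p$ fails for them (completed versus algebraic tensor product). At this point the argument is essentially circular --- stacky flat base change along the disk is the statement being proven. Even in the representable case, the invoked ``classical flat base-change theorem'' (Kiehl finiteness plus strictness or formal-model arguments) is a far heavier external input than anything the paper uses.

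The paper's proof is designed precisely to avoid ever pulling $f$ back along a non-finite map. Rather than factoring $v$ through a polydisk, it exploits coherence of $\cG \coloneqq \fib(v^* f_*(\cF) \to g_* u^*(\cF))$ to reduce the verification to points: Step 1 replaces $Y'$ by $\trunc(Y')$ (the top nonvanishing homotopy sheaf of $\cG$ survives restriction along $\trunc(Y') \hookrightarrow Y'$), Step 2 replaces $Y$ by $\trunc(Y)$, Step 3 reduces to $Y' = \Sp K$ with $K$ a finite extension of $k$ (the bottom nonvanishing homotopy sheaf of $\cG$ survives pullback to some closed point), and Step 4 factors $\Sp K \to Y$ as $\Sp K \to \Sp(\kappa(y)) \to Y$, a finite map followed by a closed immersion. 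Every base change occurring in these reductions is along a finite map, so everything lands in \cref{lem:proper_base_change_along_finite_maps}, where \cref{prop:pullback_finite_map} forces the completed tensor product to be algebraic. To salvage your route you would need both the stability of finite coherent cohomological dimension under base change along $p$ and a flat base change theorem for proper $k$-analytic \emph{stacks} along $A \to A\langle T_1, \ldots, T_n\rangle$; the second of these is essentially the theorem itself.
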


\begin{proof}
	We prove the theorem by reduction to the case treated in \cref{lem:proper_base_change_along_finite_maps}.
	The statement being local on $Y$ and $Y'$, we can assume that both are derived affinoid.
	Now we proceed in the following steps:
	
	\medskip
	\paragraph{Step 1. Reduction to the case where $Y'$ is discrete.}
	Let $\cG \coloneqq \fib( v^* f_*(\cF) \to g_* u^*(\cF))$ and let $j \colon \trunc(Y') \to Y$ be the canonical closed immersion.
	We claim that if $j^* \cG = 0$ then $\cG = 0$.
	Suppose to the contrary that $\cG \ne 0$.
	Since $\cG$ is coherent, we can choose a maximal $i$ such that $\pi_i(\cG) \ne 0$.
	Then
	\[ \pi_i(j^*(\cG)) = \pi_i(\cG \otimes_{\cO\alg_V} \pi_0(\cO\alg_V)) = \pi_i(\cG) \otimes_{\pi_0(\cO\alg_V)} \pi_0(\cO\alg_V) = \pi_i(\cG) \ne 0, \]
	which is a contradiction.
	So we have shown that $j^* \cG = 0$ implies $\cG = 0$.
	Therefore we can replace $Y'$ by $\trunc(Y')$.
	In other words, we are reduced to the case where $Y'$ is discrete.
	
	\medskip
	\paragraph{Step 2. Reduction to the case where $Y$ is discrete.}
	Since $Y'$ is discrete, the map $Y' \to Y$ factors as $Y' \to \trunc(Y) \to Y$.
	Since the map $\trunc(Y) \to Y$ is a closed immersion, by \cref{lem:proper_base_change_along_finite_maps}, the statement of the theorem holds for the pullback square
	\[ \begin{tikzcd}
	X_0 \arrow{r}{i} \arrow{d}{f_0} & X \arrow{d}{f} \\
	\trunc(Y) \arrow{r}{j} & Y .
	\end{tikzcd} \]
	By \cref{lem:finite_coherent_cohomological_dimension_pullback}, we see that $f_0$ has finite coherent cohomological dimension.
	Therefore, we can replace $Y$ by $\trunc(Y)$ and $f$ by $f_0$.
	In other words, we are reduced to the case where $Y$ is discrete.
		
	\medskip
	\paragraph{Step 3. Reduction to the case $Y' = \Sp K$, where $K$ is a finite field extension of $k$.}
	Assume that the theorem holds in the case $Y' = \Sp K$, where $K$ is a finite field extension of $k$.
	Suppose to the contrary that there is an affinoid $Y' = \Sp A$ and a coherent sheaf $\cF \in \Coh^+(X)$  for which the morphism
	\[ v^* f_*(\cF) \to g_* u^*(\cF) \]
	is not an equivalence.
	Let $\cG \coloneqq \fib( v^* f_*(\cF) \to g_* u^*(\cF) )$.
	Since $\cG$ is coherent, we can select the minimal integer $i$ such that $\pi_i(\cG) \ne 0$.
	Since $\pi_i(\cG)$ is coherent, there exists a closed point $p \colon \Sp K \to Y'$ such that $p^*(\pi_i(\cG)) \ne 0$.
	It follows from \cite[7.2.1.23]{Lurie_Higher_algebra} that $\pi_i( p^* \cG) = \pi_0(p^*( \pi_i(\cG) )) \ne 0$.
	Note that
	\[ p^* \cG \simeq \fib( p^* v^* f_*(\cF) \to p^* g_* u^*(\cF) ) .  \]
	Consider the diagram of pullback squares
	\[ \begin{tikzcd}
		X_p \arrow{r}{q} \arrow{d}{g'} & X' \arrow{r}{u} \arrow{d}{g} & X \arrow{d}{f} \\
		\Sp K \arrow{r}{p} & Y' \arrow{r}{v} & Y .
	\end{tikzcd} \]
	Since $p$ is a closed immersion, \cref{lem:proper_base_change_along_finite_maps} implies that the canonical map
	\[ p^* g_* u^*(\cF) \to g'_* q^* u^*(\cF) \]
	is an equivalence.
	It follows that we can identify $p^* \cG$ with the fiber of
	\[ (v \circ p)^* f_*(\cF) \to g'_* (u \circ q)^*(\cF) . \]
	Since $p^* \cG$ is non-zero, this contradicts our assumption that the theorem holds for $Y' = \Sp K$.
	We conclude that it is now enough to prove the theorem in the special case $Y'=\Sp K$.
	
	\medskip
	\paragraph{Step 4.}
	Let $y$ the image of $v \colon \Sp K \to Y$.
	Let $\kappa(y)$ be the residue field of $Y$ at $y$.
	We factor $v$ as
	\[ \Sp K \to \Sp(\kappa(y)) \xrightarrow{j} Y , \]
	where $j$ is a closed immersion.
	Consider the diagram of pullback squares
	\[ \begin{tikzcd}
	X' \arrow{r} \arrow{d} & X_y \arrow{r} \arrow{d} & X \arrow{d} \\
	\Sp K \arrow{r} & \Sp(\kappa(y)) \arrow{r}{j} & Y .
	\end{tikzcd} \]
	The statement of the theorem holds for the square on the right because $j$ is a closed immersion.
	On the other hand, note that $K$ is a finite field extension of $\kappa(y)$.
	In particular, $\Sp K \to \Sp(\kappa(y))$ is a finite map.
	Therefore, \cref{lem:proper_base_change_along_finite_maps} implies that the statement of the theorem holds also for the square on the left.
	We deduce that the statement holds for the outer square too, completing the proof of the theorem.
\end{proof}

\section{The plus pushforward} \label{sec:plus_pushforward}

The goal of this section is to introduce the plus pushforward functor on perfect complexes.

\begin{defin}
	Let $X$ be a derived \kanal stack and let $\cF \in \Coh^+(X)$.
	We say that $\cF$ has tor-amplitude less than $d$ if there exists a derived $k$-affinoid atlas $\{U_i\}$ of $X$ such that $\Gamma(\cF|_{U_i})$ has tor-amplitude less than $d$ as a $\Gamma(\cO\alg_{U_i})$-module.
\end{defin}

\begin{defin}
	We say that a morphism $f \colon X \to Y$ of derived \kanal spaces has \emph{tor-amplitude less than $d$} for an integer $d\ge 0$ if there exists a derived $k$-affinoid étale covering $\{V_i\}$ of $Y$ and derived $k$-affinoid étale coverings $\{U_{ij}\}$ of $V_i \times_Y X$ such that every morphism $\Gamma(\cO_{V_i}\alg) \to \Gamma(\cO_{U_{ij}}\alg)$ of simplicial commutative $k$-algebras has tor-amplitude less than $d$.
	We say that $f \colon X \to Y$ has \emph{finite tor-amplitude} if it has tor-amplitude less than $d$ for some integer $d \ge 0$.
\end{defin}

\begin{lem} \label{lem:tor_amplitude_truncations}
	Let $X$ be a derived \kanal stack and let $j \colon \trunc(X) \hookrightarrow X$ be the embedding from its truncation.
	Then any $\cF \in \Coh^+(X)$ has finite tor-amplitude if and only if $j^*(\cF)$ has finite tor-amplitude.
\end{lem}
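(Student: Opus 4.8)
The plan is to reduce to a purely algebraic statement over a single derived affinoid chart and then to observe that the two tor-amplitudes are computed by literally the same homotopy groups. First I would note that both conditions are local on $X$: finite tor-amplitude is defined through the existence of a derived $k$-affinoid atlas, and since truncation does not change the underlying étale site (topological invariance of the étale site), an affinoid atlas $\{U_i\}$ of $X$ restricts to an affinoid atlas $\{\trunc(U_i)\}$ of $\trunc(X)$ and conversely. Fixing such an atlas, write a chart as $U = \Sp B$ with $B \coloneqq \Gamma(\cO\alg_U)$ a connective simplicial commutative $k$-algebra. By \cref{thm:derived_Kiehl} the sheaf $\cF|_U$ corresponds to an almost perfect $B$-module $M \coloneqq \Gamma(\cF|_U)$, and --- exactly as in Step 1 of the proof of \cref{thm:proper_base_change} --- pullback along the truncation closed immersion $j$ is computed on global sections by algebraic base change, so that $j^*(\cF)|_{\trunc(U)}$ corresponds to $M \otimes_B \pi_0(B)$. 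Thus it suffices to prove: $M$ has finite tor-amplitude over $B$ if and only if $M \otimes_B \pi_0(B)$ has finite tor-amplitude over $\pi_0(B)$.

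The key observation is the following base-change identity. Any discrete $B$-module is the same as a discrete $\pi_0(B)$-module $N$, and for such $N$ there is a canonical equivalence
\[ M \otimes_B N \simeq \big( M \otimes_B \pi_0(B) \big) \otimes_{\pi_0(B)} N . \]
Since $M$ lies in $\Coh^+$ and $B$ is connective, $M \otimes_B N$ is bounded below with a lower bound independent of $N$; hence finite tor-amplitude is equivalent to the vanishing of $\pi_i(M \otimes_B N)$ for $i$ above some fixed degree, tested over all discrete $N$. The displayed identity shows that these homotopy groups agree with $\pi_i\big( (M \otimes_B \pi_0(B)) \otimes_{\pi_0(B)} N \big)$, which are precisely the groups controlling the tor-amplitude of $M \otimes_B \pi_0(B)$ over $\pi_0(B)$. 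Therefore the two tor-amplitudes coincide, and in particular one is finite if and only if the other is; this settles both implications at once.

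The main obstacle is not the algebra, which is essentially formal once the reduction is in place, but rather the two compatibility inputs underlying the reduction: that finite tor-amplitude is genuinely étale-local, so that it may be checked on any atlas and atlases of $X$ may be matched with atlases of $\trunc(X)$ via topological invariance of the étale site; and that $j^*$ on coherent sheaves is computed on each affinoid chart by the algebraic tensor product $- \otimes_B \pi_0(B)$ through the equivalence of \cref{thm:derived_Kiehl}. Both are available in the established framework, and once granted, the characterization of tor-amplitude by discrete test modules reduces the lemma to the single base-change identity above.
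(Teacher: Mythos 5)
Your proof is correct, and its skeleton---localize to a derived $k$-affinoid chart, pass to global sections via \cref{thm:derived_Kiehl}, and compare the tor-amplitude of $M = \Gamma(\cF|_U)$ over $B = \Gamma(\cO\alg_U)$ with that of $M \otimes_B \pi_0(B)$ over $\pi_0(B)$---is the same as the paper's. The one genuine difference is the final step: the paper disposes of the algebraic statement by citing \cite[Proposition 2.22(3)]{Toen_Moduli}, whereas you prove it from scratch, using that discrete $B$-modules coincide with discrete $\pi_0(B)$-modules together with the associativity equivalence $M \otimes_B N \simeq (M \otimes_B \pi_0(B)) \otimes_{\pi_0(B)} N$, so that the two tor-amplitudes are computed by literally the same homotopy groups. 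This inline argument is valid, makes the lemma self-contained, and has the additional virtue of preserving the bound $d$ exactly, which keeps the uniformity of the bound across an atlas automatic in both directions. Your auxiliary inputs are also legitimately available and non-circular: the identification of $\Gamma(j^*\cF)$ with $M \otimes_B \pi_0(B)$ is precisely \cref{lem:proper_base_change_affinoid} (or \cref{prop:pullback_finite_map}) applied to the truncation closed immersion, and both results precede this lemma in the paper; likewise, the matching of affinoid atlases of $X$ and $\trunc(X)$ is justified because the two have the same underlying $\infty$-topos, so étale maps over one correspond to étale maps over the other, and a derived analytic space étale over $X$ is derived affinoid exactly when its truncation is affinoid.
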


\begin{proof}
	The statement being local on $X$, we can assume $X$ to be derived affinoid.
	Then we conclude from \cref{thm:derived_Kiehl} and \cite[Proposition 2.22(3)]{Toen_Moduli}.
\end{proof}

\begin{lem}
	Let $f \colon X \to Y$ be a morphism of derived \kanal stacks.
	Let $j \colon \trunc(Y) \hookrightarrow Y$ denote the canonical embedding.
	Then $f$ has finite tor-amplitude if and only if the base change $X \times_Y \trunc(Y) \to \trunc(Y)$ has finite tor-amplitude.
\end{lem}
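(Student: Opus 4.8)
The plan is to reduce, via the finiteness of the closed immersion $j$, to the purely algebraic fact that the tor-amplitude of a connective module over a connective ring is detected after base change to $\pi_0$. First I would note that both conditions are local on $Y$, so I may assume that $Y$ is derived $k$-affinoid and set $A \coloneqq \Gamma(\cO\alg_Y)$; then $\trunc(Y)$ is the derived $k$-affinoid space with $\Gamma(\cO\alg_{\trunc(Y)}) \simeq \pi_0(A)$.

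Next I would produce matching affinoid atlases on the two sides. Starting from a derived $k$-affinoid étale atlas $\{V_i\}$ of $Y$ together with étale atlases $\{U_{ij}\}$ of $V_i \times_Y X$ witnessing finite tor-amplitude of $f$, I set $V_i^0 \coloneqq V_i \times_Y \trunc(Y)$ and $U_{ij}^0 \coloneqq U_{ij} \times_Y \trunc(Y)$. Writing $A_i \coloneqq \Gamma(\cO\alg_{V_i})$ and $B_{ij} \coloneqq \Gamma(\cO\alg_{U_{ij}})$, the map $A \to A_i$ is flat by \cref{lem:global_section_etale_is_flat}, so $V_i^0 \simeq \trunc(V_i)$ with $\Gamma(\cO\alg_{V_i^0}) \simeq \pi_0(A_i)$, and $\{V_i^0\}$ is an étale atlas of $\trunc(Y)$. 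Crucially, since $j$ is a closed immersion it is finite, so \cref{prop:pullback_finite_map} applied to the pullback square with bottom edge $j$ yields
\[ \Gamma(\cO\alg_{U_{ij}^0}) \simeq \pi_0(A) \otimes_A B_{ij} \simeq \pi_0(A_i) \otimes_{A_i} B_{ij}, \]
the second equivalence using the flatness of $A \to A_i$ and associativity of the tensor product. Thus the morphism of global sections attached to $U_{ij}^0 \to V_i^0$ is exactly the base change of $A_i \to B_{ij}$ along $A_i \to \pi_0(A_i)$.

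With the atlases in place, the equivalence follows from the algebraic criterion: a connective $A_i$-module $B_{ij}$ has tor-amplitude less than $d$ if and only if $\pi_0(A_i) \otimes_{A_i} B_{ij}$ has tor-amplitude less than $d$ over $\pi_0(A_i)$ \cite[Proposition 2.22(3)]{Toen_Moduli}. Since this preserves the bound $d$ uniformly, the pulled-back atlas witnesses finite tor-amplitude of $X \times_Y \trunc(Y) \to \trunc(Y)$ with the same $d$, giving the forward implication. For the converse I would start from an atlas witnessing finite tor-amplitude of the base change and lift it to an atlas over $Y$: because $j$ is a derived thickening (an isomorphism on underlying $\infty$-topoi), the small étale sites of $\trunc(Y)$ and $Y$---and likewise those of $(V_i \times_Y X) \times_Y \trunc(Y)$ and $V_i \times_Y X$---are equivalent, so every affinoid étale atlas downstairs lifts; running the same computation and the algebraic criterion in reverse then yields finite tor-amplitude of $f$. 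The main obstacle is precisely this lifting step in the converse direction, i.e.\ matching the atlases across $j$; the rest is the bookkeeping of \cref{prop:pullback_finite_map} together with the standard descent of tor-amplitude to $\pi_0$.
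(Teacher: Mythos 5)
Your proof is correct in substance and rests on exactly the two inputs the paper's own proof uses: \cref{prop:pullback_finite_map} (applicable because $j$ is a closed immersion, hence finite), and the fact that tor-amplitude is detected after base change to $\pi_0$ (\cite[Proposition 2.22(3)]{Toen_Moduli}, which is precisely the content of \cref{lem:tor_amplitude_truncations}). The difference is in the packaging. The paper's proof is symmetric: it localizes on both $X$ and $Y$ at once, so that the whole lemma collapses to the single affinoid-level assertion that $B$ has finite tor-amplitude over $A$ if and only if $\pi_0(A)\otimes_A B$ does over $\pi_0(A)$; since that assertion is already an ``if and only if'', no separate treatment of the converse is needed. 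You instead keep atlases explicit and treat the two directions asymmetrically. Your forward direction (base-changing a witnessing atlas, and using the flatness of \cref{lem:global_section_etale_is_flat} to rewrite $\pi_0(A)\otimes_A B_{ij}\simeq \pi_0(A_i)\otimes_{A_i}B_{ij}$) is a fleshed-out version of the paper's reduction and is airtight; in fact you could skip the localization on $Y$ entirely by applying \cref{prop:pullback_finite_map} to the square over $V_i$, since $U_{ij}\times_Y \trunc(Y)\simeq U_{ij}\times_{V_i}\trunc(V_i)$. Your converse, however, imports a genuinely extra ingredient: topological invariance of the \'etale site, i.e.\ the lifting of affinoid \'etale atlases along truncation-type closed immersions. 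This is true in the Porta--Yu framework (it is the analytic analogue of \'etale rigidity), but it is neither proved nor cited in this paper, and it is stronger than what is needed. The paper's route shows how to avoid it: the implicit content of ``the question is local on both $X$ and $Y$'' is that the tor-amplitude condition can be tested on any atlas (by flat descent of tor-amplitude along the \'etale maps of global sections), and once this is granted, the affinoid computation via \cref{prop:pullback_finite_map} and \cref{lem:tor_amplitude_truncations} yields both implications simultaneously, with no lifting of atlases at all.
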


\begin{proof}
	The question is local on both $X$ and $Y$, so we can assume that both are derived $k$-affinoid.
	Since $j \colon \trunc(Y) \to Y$ is a closed immersion, the hypotheses of \cref{prop:pullback_finite_map} are satisfied.
	The lemma is then a direct consequence of \cref{lem:tor_amplitude_truncations}.
\end{proof}

\begin{defin}
	Let $X$ be a derived \kanal stack.
	We say that $\cF \in \Coh^+(X)$ is a \emph{perfect complex} if there exists a derived $k$-affinoid atlas $\{U_i\}$ of $X$ such that $\Gamma(U_i; \cF|_{U_i})$ is perfect as $\Gamma(\cO\alg_{U_i})$-module in the sense of \cite[7.2.4.1]{Lurie_Higher_algebra}.
\end{defin}

\begin{lem} \label{lem:perfect_vs_finite_tor_amplitude}
	Let $X$ be a derived \kanal stack and let $\cF \in \Coh^+(X)$.
	Then $\cF$ is perfect if and only if it has finite tor-amplitude.
\end{lem}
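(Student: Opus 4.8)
The plan is to reduce the statement to a purely module-theoretic assertion over the algebra of global functions and then invoke the corresponding result of Lurie. First I would observe that both properties in the statement are defined through the existence of a derived $k$-affinoid atlas, and that both are local on $X$ for the smooth topology (perfectness and finite tor-amplitude descend along smooth, hence faithfully flat, covers, exactly in the spirit of \cref{lem:tor_amplitude_truncations}). Consequently we may assume that $X$ is derived $k$-affinoid. Set $A \coloneqq \Gamma(\cO_X\alg)$, which is a connective simplicial commutative $k$-algebra by \cref{thm:derived_Tate_acyclicity}, and put $M \coloneqq \Gamma(\cF)$.

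By \cref{thm:derived_Kiehl} the global section functor is a $t$-exact equivalence $\Coh^+(X) \xrightarrow{\sim} \Coh^+(A)$, so $M$ is a bounded-below coherent $A$-module. Since $\pi_0(A)$ is the $k$-affinoid algebra of $\trunc(X)$, it is Noetherian, so coherence of the homotopy groups coincides with finite presentation; hence $M \in \Coh^+(A)$ is precisely an almost perfect $A$-module. Under this equivalence, and using that $\{X\}$ is itself a smooth atlas of the affinoid $X$, perfectness of $\cF$ translates into perfectness of $M$ as an $A$-module, and finite tor-amplitude of $\cF$ translates into finite tor-amplitude of $M$ over $A$.

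We are thereby reduced to the following statement: for a connective $\mathbb E_\infty$-ring $A$ and an almost perfect $A$-module $M$, the module $M$ is perfect if and only if it has finite tor-amplitude. This is \cite[Proposition 7.2.4.23]{Lurie_Higher_algebra}, applied to the underlying $\mathbb E_\infty$-ring of $A$, the module category together with the notions of perfectness and tor-amplitude being insensitive to the simplicial refinement. This finishes the argument.

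I expect the only genuine point requiring care to be the dictionary itself: the identification $\Coh^+(A)$ with the almost perfect $A$-modules, and the verification that the atlas-based definitions of perfectness and finite tor-amplitude correspond, under \cref{thm:derived_Kiehl}, to the module-theoretic notions over $A$. Once this dictionary is in place the result is immediate from Lurie's proposition. A secondary subtlety is the locality step: as stated, finite tor-amplitude requires a uniform bound across the atlas, so the reduction to a single affinoid chart implicitly relies on perfectness and tor-amplitude being smooth-local conditions, which is exactly what legitimizes the passage to $X$ affinoid.
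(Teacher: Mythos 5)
Your proposal is correct and takes essentially the same route as the paper: the paper's entire proof is the one-line citation of \cite[7.2.4.23(4)]{Lurie_Higher_algebra}, i.e.\ exactly the module-theoretic equivalence (perfect if and only if almost perfect and of finite tor-amplitude) to which you reduce, applied chart-by-chart since both definitions are already phrased in terms of $\Gamma(\cF|_{U_i})$ over $\Gamma(\cO\alg_{U_i})$. Your spelled-out dictionary --- identifying $\Coh^+(A)$ with the almost perfect $A$-modules via \cref{thm:derived_Kiehl} and the Noetherianness of $\pi_0(A)$ --- just makes explicit what the paper leaves implicit.
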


\begin{proof}
	This is a direct consequence of \cite[7.2.4.23(4)]{Lurie_Higher_algebra}.
\end{proof}

\begin{lem} \label{lem:proper_finite_cohomological_dimension_preserves_almost_perfect}
	Let $f \colon X \to Y$ be a proper morphism of derived \kanal stacks.
	Assume that $f$ has finite coherent cohomological dimension.
	Then
	\[ f_* \colon \cO_X \Mod \to \cO_Y \Mod \]
	restricts to a functor
	\[ f_* \colon \Coh^+(X) \to \Coh^+(Y) . \]
\end{lem}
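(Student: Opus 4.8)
The plan is to recognize that this lemma merely repackages the (unlabelled) lemma stated immediately after \cref{def:coherent_cohomological_dimension}: unwinding \cref{def:coherent_cohomological_dimension}, the hypothesis that $f$ has finite coherent cohomological dimension means precisely that $f$ has coherent cohomological dimension less than $n$ for some integer $n \ge 0$, and for such $f$ the desired restriction of $f_*$ was already established there. I would therefore reproduce that argument, whose engine is the descent spectral sequence for $f_*$.

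First I would fix $\cF \in \Coh^+(X)$ and choose $m$ with $\cF \in \Coh^{\ge m}(X)$. The homotopy sheaves $\pi_j(\cF)$ then vanish for $j < m$ and are coherent sheaves on the underived truncation $\trunc(X)$. For each such $j$, the sheaf $\pi_j(\cF)$ is discrete, so $f_*(\pi_j(\cF))$ is the proper pushforward of a coherent sheaf; by \cite[Theorem 5.20]{Porta_Yu_Higher_analytic_stacks_2014} it lies in $\Cohb(Y)$, so every $\pi_i(f_*(\pi_j(\cF)))$ is coherent and vanishes outside a bounded range of $i$. The finite cohomological dimension hypothesis refines the lower end of this range uniformly: $\pi_i(f_*(\pi_j(\cF))) = 0$ for $i < -n$.

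Next I would feed this into the spectral sequence
\[ E_2^{ij} = \pi_i\big( f_*(\pi_j(\cF)) \big) \Longrightarrow \pi_{i+j}(f_* \cF) . \]
The two bounds — $E_2^{ij} = 0$ for $j < m$ and for $i < -n$ — guarantee simultaneously that the spectral sequence converges and that each total degree $i+j$ receives contributions from only finitely many nonzero entries $E_2^{ij}$. Since each such entry is coherent and coherent sheaves are closed under finite extensions and subquotients, every $\pi_{i+j}(f_* \cF)$ is coherent; moreover $E_2^{ij}=0$ unless $i+j \ge m-n$, so $\pi_\ell(f_* \cF) = 0$ for $\ell < m - n$ and $f_* \cF$ is bounded below. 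Hence $f_* \cF \in \Coh^+(Y)$, which is exactly the asserted restriction of $f_*$.

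The step I expect to be the main obstacle is controlling the abutment: one must know that finite coherent cohomological dimension, together with the lower bound on $\cF$, makes each diagonal of the $E_2$-page finite, since only then does coherence pass from the (coherent) $E_2$-page to the homotopy sheaves of $f_* \cF$. This is precisely where the hypothesis of \emph{finite} coherent cohomological dimension — rather than mere properness — is indispensable; without it the lower tail of the spectral sequence would be unbounded and $f_* \cF$ could fail to be bounded below.
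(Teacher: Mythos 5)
Your proof is correct, and your key observation is right: the lemma is logically identical to the unlabelled lemma stated right after \cref{def:coherent_cohomological_dimension} (since \emph{finite} coherent cohomological dimension means, by definition, cohomological dimension less than some $n \ge 0$), so one could even just cite that lemma; your argument is a faithful reproduction of its proof via the Postnikov spectral sequence $E_2^{ij} = \pi_i(f_*(\pi_j(\cF))) \Rightarrow \pi_{i+j}(f_*\cF)$. However, the paper proves \emph{this} lemma by a genuinely different route: it first uses the descent spectral sequence and the finite cohomological dimension hypothesis to reduce to showing that $f_*$ carries $\Cohh(X)$ into $\Coh^+(Y)$, then invokes the equivalence $\Cohh(\trunc(X)) \simeq \Cohh(X)$ (pushforward along the closed immersion $j \colon \trunc(X) \to X$, via \cite[Remark 2.1.5]{DAG-VIII}) and the factorization $f_* j_* \simeq i_* \trunc(f)_*$ to reduce everything to the underived statement \cite[Theorem 5.20]{Porta_Yu_Higher_analytic_stacks_2014} applied to $\trunc(f)$. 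The difference is not cosmetic: Theorem 5.20 is a statement about \emph{underived} analytic stacks, so applying it to $f_*(\pi_j(\cF))$ — where $\pi_j(\cF)$ lives on the derived stack $X$ and $f$ is a morphism of derived stacks — tacitly requires exactly the identification of hearts and the compatibility $f_* j_* \simeq i_* \trunc(f)_*$ that the paper's proof makes explicit; your write-up (like the paper's own earlier unlabelled lemma) leaves this passage from derived to underived implicit. What your route buys in exchange is quantitative control that the paper's proof of this lemma does not state: the explicit bound $\pi_\ell(f_*\cF) = 0$ for $\ell < m - n$ whenever $\cF \in \Coh^{\ge m}(X)$.
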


\begin{proof}
	First of all, we remark that the cohomological descent spectral sequence provided by \cite[Theorem 8.8]{Porta_Yu_Higher_analytic_stacks_2014} and the fact that $f$ has finite coherent cohomological dimension imply that it is enough to prove that $f_* \colon \cO_X \Mod \to \cO_Y \Mod$ takes $\Cohh(X)$ to $\Coh^+(X)$.
	
	Next, we observe that \cite[Remark 2.1.5]{DAG-VIII} implies that pushforward along the canonical closed immersion $j \colon \trunc(X) \to X$ induces an equivalence
	\[ \Cohh(\trunc(X)) \simeq \Cohh(X) . \]
	Finally, consider the commutative diagram
	\[ \begin{tikzcd}
		\trunc(X) \arrow{r}{j} \arrow{d}{\trunc(f)} & X \arrow{d}{f} \\
		\trunc(Y) \arrow{r}{i} & Y .
	\end{tikzcd} \]
	Let $\cF \in \Cohh(X)$ and choose $\cG \in \Cohh(\trunc(X))$ such that $j_*(\cG) \simeq \cF$.
	Then
	\[ f_*(\cF) \simeq f_*(j_*(\cG)) \simeq i_*(\trunc(f)_*(\cG)) . \]
	Since $i_*$ takes $\Coh^+(\trunc(Y))$ to $\Coh^+(Y)$, it is enough to prove that $\trunc(f)_*(\cG) \in \Coh^+(X)$.
	Since $\trunc(f)$ is proper, this follows from \cite[Theorem 5.20]{Porta_Yu_Higher_analytic_stacks_2014}.
\end{proof}

\begin{prop} \label{prop:perfect_pushforward}
	Let $f \colon X \to Y$ be a proper morphism of derived \kanal stacks.
	Assume that $f$ has finite coherent cohomological dimension and finite tor-amplitude.
	Then $f_* \colon \Coh^+(X) \to \Coh^+(Y)$ preserves perfect complexes.
\end{prop}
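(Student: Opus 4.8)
The plan is to combine the two preceding facts that $f_*\cF$ is already almost perfect and that perfectness is equivalent to finite tor-amplitude. By \cref{lem:proper_finite_cohomological_dimension_preserves_almost_perfect} we know $f_*(\cF) \in \Coh^+(Y)$, and by \cref{lem:perfect_vs_finite_tor_amplitude} it remains only to show that $f_*(\cF)$ has finite tor-amplitude. The statement is local on $Y$, so I would first reduce to the case where $Y$ is derived affinoid; set $A \coloneqq \Gamma(\cO_Y\alg)$ and $M \coloneqq \Gamma(f_*(\cF))$, which by \cref{thm:derived_Kiehl} is an almost perfect $A$-module. Since $M$ is bounded below and $\otimes$ is right $t$-exact, tensoring with a discrete module preserves this lower bound; hence $M$ automatically has tor-amplitude bounded below, and the only thing to establish is a uniform upper bound $d$ on its tor-amplitude, where $d$ is the tor-amplitude of the perfect complex $\cF$ itself.

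Next I would test this upper bound on the fibers of $f$. Since $\pi_0(A)$ is a Jacobson Noetherian affinoid algebra and $M$ is almost perfect, the locus of points where the tor-amplitude of $M$ exceeds $d$ is closed; if nonempty it contains a closed point, exactly as in Step 3 of the proof of \cref{thm:proper_base_change}. It therefore suffices to verify that $\pi_i(M \otimes_A K) = 0$ for $i > d$ as $K = \kappa(y)$ ranges over the residue fields at the closed points $y$ of $Y$. For such a point the map $v \colon \Sp K \to Y$ is finite, so \cref{thm:proper_base_change} applies to the pullback square defining the fiber $g \colon X_K \coloneqq X \times_Y \Sp K \to \Sp K$ and yields a canonical equivalence
\[ M \otimes_A K \simeq \Gamma\big( g_* u^*(\cF) \big), \]
where $u \colon X_K \to X$ is the projection.

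Finally I would read off the uniform bound. The pullback functor $u^*$ is symmetric monoidal and therefore preserves perfect complexes, so $u^*(\cF)$ is again perfect of tor-amplitude $\le d$ on $X_K$. Tor-amplitude $\le d$ forces $u^*(\cF) \in \Coh^{\le d}(X_K)$ (the top homotopy sheaf survives base change to $\pi_0$), and since $g_*$ is left $t$-exact we obtain $g_* u^*(\cF) \in \Coh^{\le d}(\Sp K)$. Hence $\pi_i(M \otimes_A K) = 0$ for every $i > d$, with $d$ independent of the chosen closed point. This exhibits the desired uniform upper bound on the tor-amplitude of $M$, so $M$ is perfect and the proposition follows. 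Alternatively, one may first use \cref{lem:tor_amplitude_truncations} together with proper base change along the closed immersion $\trunc(Y) \hookrightarrow Y$ to reduce to the case where $Y$ is discrete, and then run the same fiberwise argument over the Noetherian affinoid $\pi_0(A)$.

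The main obstacle is this last step: converting the tor-amplitude hypothesis on $\cF$ into a genuinely uniform homological bound on the fibers $g_* u^*(\cF)$, independent of the residue field $K$. It rests on two technical points — that tor-amplitude $\le d$ implies membership in $\Coh^{\le d}$, and that proper pushforward $g_*$ is left $t$-exact — and on correctly invoking proper base change at the finite closed points $\Sp K$, so that fiberwise vanishing detects the tor-amplitude globally.
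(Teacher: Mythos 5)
Your proposal takes a genuinely different route from the paper. The paper first reduces to truncated $Y$ (via \cref{lem:tor_amplitude_truncations} and \cref{thm:proper_base_change}), then chooses a finite étale hypercovering of $X$ by derived affinoids and writes $f_*(\cF)$ as a \emph{finite} limit of the pushforwards $f^m_*(\cF|_{U^m})$ --- finiteness of the limit coming from the coherent cohomological dimension hypothesis together with the $n$-cofinality results \cref{prop:n_cofinal_colimit} and \cref{cor:Delta_cofinal} --- and concludes because each term has finite tor-amplitude and finite tor-amplitude is closed under finite limits. You instead propose the classical fiberwise strategy: detect tor-amplitude of the almost perfect module $M=\Gamma(f_*\cF)$ at the residue fields of closed points, and bound the fibers uniformly using proper base change. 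That strategy is viable in principle, but as written it has a genuine gap at exactly the step you flag as the main obstacle.

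The gap is the claim that ``tor-amplitude $\le d$ forces $u^*(\cF)\in\Coh^{\le d}(X_K)$, because the top homotopy sheaf survives base change to $\pi_0$.'' Both halves are wrong. What survives tensoring down to $\pi_0$ is the \emph{bottom} homotopy sheaf, not the top: for connective modules one has $\pi_0(M\otimes N)\simeq \pi_0(M)\otimes_{\pi_0}\pi_0(N)$ (this is \cite[7.2.1.23]{Lurie_Higher_algebra}, used in Step 3 of the paper's proof of \cref{thm:proper_base_change}); the top homotopy group is in general destroyed by such a tensor. And the implication itself fails: take $\cF=\cO_X$, which has tor-amplitude $\le 0$, while $u^*(\cF)=\cO\alg_{X_K}$ lies in $\Coh^{\le 0}(X_K)$ only if the fiber $X_K$ is truncated --- which fails whenever $X$ is genuinely derived. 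So, as stated, your argument produces no bound at all on $\pi_*(g_*u^*(\cF))$.

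The step can be repaired while keeping your structure. Since $K$ is a \emph{discrete} $A$-module and $f$ has tor-amplitude $\le d_f$, the structure sheaf of the fiber is bounded above: locally $\pi_i(B\otimes_A K)=0$ for $i>d_f$, so $\cO\alg_{X_K}\in\Coh^{\le d_f}(X_K)$. Over a base whose structure sheaf lies in $\Coh^{\le d_f}$, an almost perfect module of tor-amplitude $\le d_\cF$ lies in $\Coh^{\le d_\cF+d_f}$; this requires an actual induction (attach cells $\cO_{X_K}^{\oplus n}\to \cF'$ killing $\pi_0$, note the fiber has tor-amplitude $\le d_\cF-1$, and use the long exact sequence), not the slogan about top homotopy. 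The resulting uniform bound is $d_\cF+d_f$, not $d_\cF$, but uniformity is all your fiberwise criterion needs. Two further points should be made honest: (i) tor-amplitude over the derived ring $A$ must be routed through $\pi_0(A)$ (so your ``alternative'' reduction to discrete $Y$ via \cref{lem:tor_amplitude_truncations} and \cref{thm:proper_base_change} is not optional --- it is the cleanest way to set up the criterion, and it is also the paper's own first step); (ii) the assertion that for almost perfect $M$ fiberwise vanishing at the closed points of the Jacobson Noetherian ring $\pi_0(A)$ detects tor-amplitude is a citable but nontrivial fact (Nakayama over the local rings plus finite generation of the homotopy of $M\otimes N$); your ``bad locus is closed'' phrasing is loose, since without an a priori global bound that locus is an infinite union of closed sets.
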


\begin{proof}
	The statement being local on $Y$, we can assume $Y$ to be derived affinoid.
	
	Let $\cF \in \Coh^+(X)$ be a perfect complex.
	Since $f$ is proper and has finite coherent cohomological dimension, \cref{lem:proper_finite_cohomological_dimension_preserves_almost_perfect} implies that $f_*(\cF) \in \Coh^+(X)$.
	Combining \cref{thm:derived_Kiehl} and \cref{lem:perfect_vs_finite_tor_amplitude}, we are reduced to check that $f_*(\cF)$ has finite tor-amplitude.
	Let $j \colon \trunc(Y) \hookrightarrow Y$ denote the canonical embedding.
	Then in virtue of \cref{lem:tor_amplitude_truncations}, it is enough to prove that $j^*(f_*(\cF))$ has finite tor-amplitude.
	Consider the pullback square
	\[ \begin{tikzcd}
		X_0 \arrow{r}{i} \arrow{d}{f_0} & X \arrow{d}{f} \\
		\trunc(Y) \arrow{r} & Y .
	\end{tikzcd} \]
	Using the proper base change (cf.\ \cref{thm:proper_base_change}), we obtain a canonical equivalence
	\[ j^*(f_*(\cF)) \simeq f_{0*}(i^*(\cF)) . \]
	As $i^*(\cF)$ is again perfect, we can replace $Y$ by $\trunc(Y_0)$.
	In other words, we can assume $Y$ to be truncated from the very beginning.
	
	In this case, since $f$ has finite tor-amplitude and is proper, the structure sheaf of $X$ is bounded above.
	In other words, there exists an integer $n \ge 0$ such that $\cO\alg_X \in \Coh^{\le n}(X)$.
	Since $f$ is proper and $Y$ is quasi-compact, $X$ is quasi-compact as well.
	Therefore, we obtain that $\cF \in \Cohb(X)$.
	Using again the properness of $f$, we find an étale hypercovering $U^\bullet$ of $X$ such that each $U^n$ is a disjoint union of finitely many derived $k$-affinoid spaces.
	Then \cite[Corollary 8.6]{Porta_Yu_Higher_analytic_stacks_2014} implies that
	\[ f_*(\cF) \simeq \lim_{m \in \mathbf \Delta} f^m_*( \cF |_{U^m} ) . \]
	Since $f$ has finite coherent cohomological dimension and each map $f^m_* \colon \Coh^+(U^m) \to \cO_Y \Mod$ is $t$-exact, we see that the above limit takes values in the $n$-category $\cO_Y \Mod^{\le n} \cap \cO_Y \Mod^{\ge d}$ for some $d \ll 0$.
	Set $l \coloneqq n - d +2$.
	The $(n-d)$-cofinality of $\mathbf \Delta_{\le n-d+2} \hookrightarrow \mathbf \Delta$ implies that we can rewrite the above limit as
	\[ f_*(\cF) \simeq \lim_{m \in \mathbf \Delta_{\le l}} f^m_*(\cF  |_{U^m}) . \]
	Since $f$ has finite tor-amplitude, each $U^m$ has finite tor-amplitude over $Y$.
	Therefore, each $f^m_*(\cF|_{U^m})$ has finite tor-amplitude.
	The proposition now follows from the fact that the full subcategory of $\cO_Y \Mod$ spanned by modules of finite tor-amplitude is closed under finite limits.
\end{proof}

For any derived \kanal stacks, any $\cF \in \Perf(X)$, we define
\[ \cF^\vee \coloneqq \cHom_{\cO\alg_X}(\cF, \cO\alg_X) . \]

\begin{defin} \label{def:plus_pushforward}
	Let $f \colon X \to Y$ be a proper morphism of derived \kanal stacks.
	Assume that $f$ has finite coherent cohomological dimension and finite tor-amplitude.
	For any $\cF \in \Perf(X)$, we define
	\[ f_+(\cF) \coloneqq (f_*(\cF^\vee))^\vee. \]
\end{defin}

Let $f \colon X \to Y$ be as in \cref{def:plus_pushforward}.
Then for any $\cF \in \Perf(X)$, we have a canonical map
\begin{equation} \label{eq:eta_F}
	\eta_\cF \colon \cF \longrightarrow f^*(f_+(\cF)).
\end{equation}
Indeed, the counit of the adjunction $f^* \dashv f_*$ gives a map
\[ f^* f_*(\cF^\vee) \to \cF^\vee.\]
Since $\cF$ and $f^* f_*(\cF^\vee)$ are perfect, the map above induces a map $\cF \to ( f^* f_*(\cF^\vee) )^\vee$.
Then by the equivalence $( f^* f_*(\cF^\vee) )^\vee \simeq f^*((f_*(\cF^\vee))^\vee)$, we obtain the canonical map \eqref{eq:eta_F}.

\begin{prop}
	Let $f \colon X \to Y$ be a proper morphism of derived \kanal stacks.
	Assume that $f$ has finite coherent cohomological dimension and finite tor-amplitude.
	Then for any $\cF \in \Perf(X)$ and $\cG \in \Coh^+(Y)$, the map $\eta_\cF \colon \cF \to f^*(f_*(\cF))$ induces an equivalence
	\[ \Map_{\Coh^+(Y)}(f_+(\cF), \cG) \simeq \Map_{\Coh^+(X)}(\cF, f^*(\cG)) . \]
\end{prop}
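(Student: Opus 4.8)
The plan is to produce a chain of natural equivalences identifying both mapping spaces with the single object $\Gamma(Y, f_*(\cF^\vee) \otimes_{\cO_Y} \cG)$, and then to check that the resulting identification is the one induced by $\eta_\cF$. First I would record that $f_+(\cF)$ is a genuine perfect complex on $Y$, so that the duality manipulations below are legitimate: since $\cF \in \Perf(X)$ its dual $\cF^\vee$ is again perfect, and \cref{prop:perfect_pushforward} guarantees that $f_*(\cF^\vee)$ is perfect on $Y$; hence $f_+(\cF) = (f_*\cF^\vee)^\vee$ is perfect, and the canonical double-duality equivalence $\cE \simeq \cE^{\vee\vee}$ holds for each of $\cF$, $\cF^\vee$, and $f_*(\cF^\vee)$.

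For the core computation, write $P \coloneqq f_*(\cF^\vee)$. Because $P$ is perfect, $\RcHom(P^\vee, \cG) \simeq (P^\vee)^\vee \otimes_{\cO_Y} \cG \simeq P \otimes_{\cO_Y} \cG$, so
\[ \Map_{\Coh^+(Y)}(f_+\cF, \cG) = \Map_Y(P^\vee, \cG) \simeq \Gamma\big(Y, P \otimes_{\cO_Y} \cG\big). \]
Now I would apply the projection formula \cref{thm:projection_formula} with the perfect (hence bounded coherent) complex $\cF^\vee$ in the role of $\cF$, giving $P \otimes_{\cO_Y} \cG \simeq f_*(\cF^\vee \otimes_{\cO_X} f^*\cG)$. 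Since global sections on $Y$ composed with $f_*$ compute global sections on $X$ (as $f^*\cO_Y \simeq \cO_X$ and $\Gamma(-) = \Map(\cO,-)$), we obtain
\[ \Gamma\big(Y, P \otimes_{\cO_Y}\cG\big) \simeq \Gamma\big(Y, f_*(\cF^\vee \otimes_{\cO_X} f^*\cG)\big) \simeq \Gamma\big(X, \cF^\vee \otimes_{\cO_X} f^*\cG\big). \]
Finally, duality for the perfect complex $\cF$ on $X$ gives $\Gamma(X, \cF^\vee \otimes_{\cO_X} f^*\cG) \simeq \Gamma(X, \RcHom(\cF, f^*\cG)) \simeq \Map_X(\cF, f^*\cG)$. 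Concatenating yields a natural equivalence $\Map_Y(f_+\cF, \cG) \simeq \Map_X(\cF, f^*\cG)$, where the underlying $\Gamma$-computations are valid thanks to derived Tate acyclicity (\cref{thm:derived_Kiehl}).

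The last and most delicate step is to verify that this composite equivalence coincides with the map $\alpha \mapsto f^*(\alpha) \circ \eta_\cF$. Here I would unwind the construction of $\eta_\cF$: by definition it is obtained by dualizing the counit $\varepsilon \colon f^* f_*(\cF^\vee) \to \cF^\vee$ of the adjunction $f^* \dashv f_*$ and using the identifications $\cF \simeq \cF^{\vee\vee}$ and $(f^* f_*\cF^\vee)^\vee \simeq f^*(f_+\cF)$. Tracing a map $\alpha \colon f_+\cF \to \cG$ through the chain above and, separately, through $f^*(\alpha)\circ\eta_\cF$, the two agree once one invokes the triangle identity for $f^* \dashv f_*$ together with the naturality of the evaluation and coevaluation maps of the perfect complexes involved. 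I expect this naturality bookkeeping, rather than any analytic input, to be the main obstacle: all of the geometric content, namely preservation of perfect complexes under $f_*$, the projection formula, and Tate acyclicity computing the relevant global sections, is already available, so what remains is purely the coherence of the duality and adjunction data.
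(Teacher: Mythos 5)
Your proposal is correct and follows essentially the same route as the paper: the paper's proof is exactly the chain $\cHom_{\cO\alg_Y}(f_+(\cF),\cG) \simeq f_*(\cF^\vee)\otimes\cG \simeq f_*(\cF^\vee\otimes_{\cO\alg_X} f^*(\cG)) \simeq f_*\cHom_{\cO\alg_X}(\cF,f^*(\cG))$, using perfectness of $\cF$, the projection formula (\cref{thm:projection_formula}), and perfectness of $f_*(\cF^\vee)$ via \cref{prop:perfect_pushforward} — your argument is the global-sections shadow of this, and is if anything more careful, since you also address the compatibility of the resulting equivalence with $\eta_\cF$, which the paper leaves implicit.
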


\begin{proof}
	It is enough to prove that $\eta_\cF$ induces an equivalence
	\[ \cHom_{\cO\alg_Y}(f_+(\cF), \cG) \simeq f_* \cHom_{\cO\alg_X}(\cF, f^*(\cG)) . \]
	Since $\cF$ is perfect, we have
	\[ \cHom_{\cO\alg_X}(\cF, f^*(\cG)) \simeq \cF^\vee \otimes_{\cO\alg_X} f^*(\cG) . \]
	Under this identification, the statement follows from the following chain of equivalences
	\[ f_*(\cF^\vee \otimes_{\cO\alg_X} f^*(\cG)) \simeq f_*(\cF^\vee) \otimes \cG \simeq \cHom_{\cO\alg_Y}(f_*(\cF^\vee)^\vee, \cG), \]
	where the first equivalence is the projection formula from \cref{thm:projection_formula}.
\end{proof}

\begin{prop} \label{prop:plus_pushforward_base_change}
	Let
	\[ \begin{tikzcd}
		X' \arrow{r}{u} \arrow{d}{g} & X \arrow{d}{f} \\
		Y' \arrow{r}{v} & Y
	\end{tikzcd} \]
	be a pullback square of derived \kanal stacks.
	Assume that $f$ is proper, has finite coherent cohomological dimension and finite tor-amplitude.
	Then for any $\cF \in \Perf(X)$, the canonical map
	\[ g_+(u^*(\cF)) \longrightarrow v^*(f_+(\cF)) \]
	is an equivalence.
\end{prop}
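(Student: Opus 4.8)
The plan is to identify the canonical map with the dual of the proper base change transformation of \cref{thm:proper_base_change}; once this is done, the assertion follows immediately, since $(-)^\vee$ is a contravariant self-equivalence of $\Perf(-)$ and hence carries equivalences to equivalences.

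First I would check that the hypotheses on $f$ pass to its base change $g$, so that $g_+$ is defined on $\Perf(X')$: properness and finite tor-amplitude are stable under base change, and the same holds for finite coherent cohomological dimension. Since pullback along $u$ is symmetric monoidal, it preserves perfect complexes and carries duals to duals, so $u^*(\cF) \in \Perf(X')$ and $u^*(\cF^\vee) \simeq (u^*\cF)^\vee$; the analogous facts hold for $v^*$. Unwinding \cref{def:plus_pushforward}, the source and target of the canonical map are
\[ g_+(u^*\cF) = \big( g_*\big((u^*\cF)^\vee\big) \big)^\vee, \qquad v^*(f_+\cF) = v^*\big( (f_*(\cF^\vee))^\vee \big). \]
By \cref{prop:perfect_pushforward}, $f_*(\cF^\vee) \in \Perf(Y)$, and likewise $g_*\big((u^*\cF)^\vee\big) \simeq g_* u^*(\cF^\vee) \in \Perf(Y')$. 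Because $v^*$ preserves duals of perfect complexes, we get $v^*(f_+\cF) \simeq \big(v^* f_*(\cF^\vee)\big)^\vee$. Applying \cref{thm:proper_base_change} to $\cF^\vee \in \Coh^+(X)$ gives an equivalence $\beta \colon v^* f_*(\cF^\vee) \xrightarrow{\ \sim\ } g_* u^*(\cF^\vee)$, and dualizing, together with the identifications above, yields an equivalence
\[ \beta^\vee \colon g_+(u^*\cF) \simeq \big(g_* u^*(\cF^\vee)\big)^\vee \xrightarrow{\ \sim\ } \big(v^* f_*(\cF^\vee)\big)^\vee \simeq v^*(f_+\cF). \]

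The one step requiring genuine care — and the main obstacle — is verifying that $\beta^\vee$ coincides with the canonical map of the statement. This is a coherence check: one must trace the construction of the canonical map, which is assembled from the base change transformation $v^* f_* \Rightarrow g_* u^*$ together with the duality equivalences $u^*(\cF^\vee) \simeq (u^*\cF)^\vee$ and $v^*((f_*(\cF^\vee))^\vee) \simeq (v^* f_*(\cF^\vee))^\vee$, and confirm that these natural transformations are mutually compatible, so that passing to duals exchanges the base change map for $f_*$ with the one for $f_+$. Granting this compatibility, the equivalence is exactly $\beta^\vee$, and the proposition follows from \cref{thm:proper_base_change}.
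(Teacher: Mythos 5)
Your proposal is correct and is essentially the paper's own argument: the paper's proof consists of the single sentence that the result ``follows from the proper base change of \cref{thm:proper_base_change} together with the explicit formula for the plus pushforward,'' which is exactly the dualization argument you carry out in detail. Your additional verifications (stability of the hypotheses under base change, compatibility of $u^*$, $v^*$ with duals of perfect complexes via \cref{prop:perfect_pushforward}, and the coherence of the dualized base change map with the canonical one) are precisely the steps the paper leaves implicit.
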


\begin{proof}
	It follows from the proper base change of \cref{thm:proper_base_change} together with the explicit formula for the plus pushforward.
\end{proof}

\section{Derived mapping stack} \label{sec:mapping_stack}

In this section, we prove the representability of the $\infty$-functor $\bfMap_S(X,Y)$ in \cref{thm:Map_intro}, the main theorem of this paper.
Our approach is based on the representability theorem in derived analytic geometry, which has been established in our previous work \cite{Porta_Yu_Representability}.
We refer to Section 7 in loc.\ cit.\ for the criteria used in the representability theorem, namely, the notions of infinitesimally cartesian, convergent, and having a global analytic cotangent complex.

First let us give a functorial description of the $\infty$-functor $\bfMap_S(X,Y)$ in full generality.

Let $F \to S$ be a morphism of stacks over the site $(\dAfd_k,\tauet)$.
Since the overcategory $\St(\dAfdk, \tauet)_{/S}$ is an $\infty$-topos, the functor
\[ F \times_S - \colon \St(\dAfdk, \tauet)_{/S} \longrightarrow \St(\dAfdk, \tauet)_{/S} \]
given by product with $F$ commutes with colimits.
So it admits a right adjoint, which we denote by
\[ \bfMap_S(F,-) \colon \St(\dAfdk, \tauet)_{/S} \longrightarrow \St(\dAfdk, \tauet)_{/S} . \]
Notice that for every $G \in \St(\dAfdk, \tauet)_{/S}$ and every derived $k$-affinoid $T$ equipped with a map $T \to S$, the Yoneda lemma implies that
\begin{align*}
	\bfMap_S(F,G)(T) & \simeq \Map_{\St(\dAfdk, \tauet)_{/S}}(T, \bfMap_S(F,G)) \\
	& \simeq \Map_{\St(\dAfdk, \tauet)_{/S}}(T \times_S F, G) \\
	& \simeq \Map_{\St(\dAfdk, \tauet)_{/T}}(T \times_S F, T \times_S G) .
\end{align*}
So it coincides with the description in the introduction.

\begin{lem} \label{lem:map_preserves_inf_cartesian}
	Let $G \to S$ be an infinitesimally cartesian morphism of stacks over the site $(\dAfdk,\tauet)$.
	Then for any $F \in \St(\dAfdk, \tauet)_{/S}$, the morphism $\bfMap_S(F,G) \to S$ is infinitesimally cartesian.
\end{lem}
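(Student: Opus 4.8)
The plan is to test the defining condition of an infinitesimally cartesian morphism directly on $\bfMap_S(F,G)$, using the adjunction description $\bfMap_S(F,G)(T)\simeq\Map_{\St(\dAfdk,\tauet)_{/S}}(T\times_S F, G)$ established above. Recall that a morphism $H\to S$ is infinitesimally cartesian precisely when, for every square-zero extension datum $(V,\cF,d)$ producing the pushout $V_d[\cF]=V\sqcup_{V[\cF]}V$ in $\dAnk$ together with a structure map to $S$, the square $H(V_d[\cF])\to H(V)\times_{H(V[\cF])}H(V)$ is cartesian relative to the corresponding square for $S$. The subtle point is that although $\bfMap_S(F,-)$ is a right adjoint and hence preserves limits, this preservation concerns the target variable $G$, whereas the test squares vary the source variable; so the argument must be pushed through $F$.

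First I would reduce to the case where $F$ is representable. Writing $F\simeq\colim_i X_i$ as a colimit of representables in the \inftopos $\St(\dAfdk,\tauet)_{/S}$ and using that $T\times_S-$ commutes with colimits, the adjunction gives $\bfMap_S(F,G)\simeq\lim_i\bfMap_S(X_i,G)$ in $\St(\dAfdk,\tauet)_{/S}$. Since the infinitesimally cartesian morphisms over $S$ are exactly the objects of $\St(\dAfdk,\tauet)_{/S}$ that are local with respect to the class of square-zero comparison maps, they are stable under limits (cf.\ \cite{Porta_Yu_Representability}); hence it suffices to treat each $\bfMap_S(X_i,G)$, i.e.\ to assume $F=X$ representable. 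As the condition is local on the target, I would moreover take $S$ to be derived affinoid.

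It then remains to show, for $X$ derived affinoid over a derived affinoid $S$, that $\bfMap_S(X,G)\to S$ is infinitesimally cartesian. Via the adjunction this amounts to proving that $G$ sends the square obtained by applying $-\times_S X$ to the square-zero pushout to a square that is cartesian relative to $S$. The key geometric input is that $-\times_S X$ carries this square-zero pushout to another square-zero pushout in $\dAnk$: the fiber product $V_d[\cF]\times_S X$ is again derived affinoid and is the square-zero extension of $V\times_S X$ by the pullback of $\cF$ along the projection, twisted by the pulled-back derivation, while the split corner $V[\cF]\times_S X\simeq(V\times_S X)[p^*\cF]$ is handled by \cref{lem:pullback_split_square_zero_extension}. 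Granting this, the hypothesis that $G$ is infinitesimally cartesian applies verbatim to the new square-zero datum and yields the claim. The main obstacle is precisely this product/square-zero compatibility: as in the proper base change theorem, fiber products of derived affinoid spaces are not computed by naive algebraic tensor products, so one must argue through the structure-sheaf computations in the spirit of \cref{lem:relative_disk_flat_I,lem:pullback_split_square_zero_extension}; one must also take care to establish it independently of \cref{lem:product_preserves_infinitesimal_pushout}, since the latter is itself deduced from the present lemma.
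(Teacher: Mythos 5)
Your proposal is correct, and its core coincides with the paper's proof: both reduce to affinoid $S$, both ultimately rest on the identifications $U \times_S V[\cF] \simeq (U \times_S V)[p^*\cF]$ and $U \times_S V_d[\cF] \simeq (U \times_S V)_{p^*(d)}[p^*\cF]$ for a pair of derived affinoids $U, V$ over $S$, and both then conclude by applying the hypothesis that $G \to S$ is infinitesimally cartesian to the new square-zero datum $(U \times_S V, p^*\cF, p^*(d))$. Where you differ is the reduction to representables: you write $F \simeq \colim_i X_i$, use that $\bfMap_S(-,G)$ carries this colimit to a limit in $\St(\dAfdk,\tauet)_{/S}$, and invoke closure of infinitesimally cartesian morphisms under limits (your locality observation, which is correct but needs the small verification that the relative pullback condition passes to limits computed in the overcategory, i.e.\ with $S$ adjoined at the cone point). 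The paper instead keeps $F$ arbitrary and applies the adjunction a second time, rewriting $\Map_{/S}(F \times_S V_d[\cF], G)$ as $\Map_{/S}(F, \bfMap_S(V_d[\cF], G))$; since $\Map_{/S}(F,-)$ preserves limits, it suffices to show that $\bfMap_S(V_d[\cF], G) \to \bfMap_S(V,G) \times_{\bfMap_S(V[\cF],G)} \bfMap_S(V,G)$ is an equivalence of stacks, which is then tested on representables $U \in \dAfd_{k/S}$ --- landing on exactly the statement you prove for $X_i$. The paper's maneuver buys economy (limit-preservation of $\Map_{/S}(F,-)$ is immediate, no auxiliary stability lemma needed); yours is arguably more standard and makes the role of representables explicit. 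Finally, your caution about \cref{lem:product_preserves_infinitesimal_pushout} is well placed: its proof does cite the present lemma, and accordingly the paper's own proof establishes the two square-zero identifications directly (via the splitting $\anL_{U \times_S V} \simeq p^*\anL_V \oplus_{\pi^*\anL_S} q^*\anL_U$ defining $p^*(d)$) rather than through that lemma --- note also that \cref{lem:pullback_split_square_zero_extension}, which you cite for the split corner, is stated only for absolute products, so in either writeup the relative identifications require the direct structure-sheaf argument you indicate.
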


\begin{proof}
	Thanks to \cite[6.1.3.9]{HTT}, we can assume $S$ to be a derived $k$-affinoid space.
	Let $V \in \dAfd_{k / S}$, $\cF \in \Coh^{\ge 1}(V)$ and let $d \colon \anL_V \to \cF$ be an $S$-linear analytic derivation.
	Let $V_d[\cF]$ be the square-zero extension determined by $d$.
	We have to prove that the diagram
	\[ \begin{tikzcd}
		\bfMap_S(F,G)(V_d[\cF]) \arrow{r} \arrow{d} & \bfMap_S(F,G)(V) \times_{\bfMap_S(F,G)(V[\cF])} \bfMap_S(F,G)(V) \arrow{d} \\
		S(V_d[\cF]) \arrow{r} & S(V) \times_{S(V[\cF])} S(V)
	\end{tikzcd} \]
	is a pullback square.
	In this case, the bottom arrow is an equivalence thanks to \cite[Lemma 7.7]{Porta_Yu_Representability}.
	We are therefore reduced to check that the map
	\[ \bfMap_S(F,G)(V_d[\cF]) \longrightarrow \bfMap_S(F,G)(V) \times_{\bfMap_S(F,G)(V[\cF])} \bfMap_S(F,G)(V) \]
	is an equivalence.
	
	Unraveling the definitions, we can rewrite the above map as
	\[ \Map_{/S}(F \times_S V_d[\cF], G) \longrightarrow \Map_{/S}(F \times_S V, G) \times_{\Map_{/S}(F \times_S V[\cF], G)} \Map_{/S}(F \times_S V, G) , \]
	and again as
	\begin{multline*}
	\Map_{/S}\big(F, \bfMap_S(V_d[\cF], G)\big) \to \\
	\Map_{/S}(F, \bfMap_S(V, G)) \times_{\Map_{/S}(F, \bfMap_S(V[\cF], G))} \Map_{/S}\big(F, \bfMap_S(V, G)\big) .
	\end{multline*}
	Since the functor $\Map_S(F,-)$ commutes with limits, it is enough to prove that the canonical map
	\[ \bfMap_S(V_d[\cF], G) \to \bfMap_S(V, G) \times_{\bfMap_S(V[\cF], G)} \bfMap_S(V, G) \]
	is an equivalence.
	In other words, we have to prove that for every $U \in \dAfd_{k/S}$, the map
	\[ \Map(U \times_S V_d[\cF], G) \to \Map(U \times_S V, G) \times_{\Map(U \times_S V[\cF], G)} \Map(U \times_S V, G) \]
	is an equivalence.
	Let $p \colon U \times_S V \to V$ and $q \colon U \times_S V \to U$ be the canonical projections, and let $\pi \colon U \times_S V \to S$ be the canonical morphism.
	Then we have $U \times_S V[\cF] \simeq (U \times_S V)[p^* \cF]$, and $U \times_S V_d[\cF] \simeq (U \times V)_{p^*(d)}[p^* \cF]$, where $p^*(d)$ is the analytic derivation
	\[ \anL_{U \times_S V} \simeq p^* \anL_V \oplus_{\pi^* \anL_S} q^* \anL_U \to p^* \anL_V \to p^* \cF . \]
	The lemma now follows from the fact that the morphism $G \to S$ is infinitesimally cartesian.
\end{proof}

\begin{lem} \label{lem:map_convergent}
	Let $G \to S$ be a convergent morphism in $\St(\dAfd_k,\tauet)$.
	Then for any $F \in \St(\dAfdk, \tauet)_{/S}$, the morphism $\bfMap_S(F,G) \to S$ is convergent.
\end{lem}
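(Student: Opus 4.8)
The plan is to follow the template of the proof of \cref{lem:map_preserves_inf_cartesian}. First I would invoke \cite[6.1.3.9]{HTT} to reduce to the case where $S$ is a derived $k$-affinoid space. Since $S$ is then representable, it is convergent, so in the defining pullback square for convergence of $\bfMap_S(F,G) \to S$ — which relates its values on a test object $T$ and on the Postnikov tower $\{\tau_{\le n}T\}$ to those of $S$ — the bottom horizontal map $S(T) \to \lim_n S(\tau_{\le n}T)$ is an equivalence. Hence the square is a pullback as soon as the top map is an equivalence, and it suffices to show that for every derived $k$-affinoid $T$ over $S$ the canonical map
\[ \bfMap_S(F,G)(T) \longrightarrow \lim_{n} \bfMap_S(F,G)(\tau_{\le n} T) \]
is an equivalence. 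Unwinding the adjunction defining $\bfMap_S(F,-)$, this is identified with
\[ \Map_{/S}(T \times_S F, G) \longrightarrow \lim_{n} \Map_{/S}(\tau_{\le n} T \times_S F, G). \]

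Next I would reduce to the case where $F$ is representable. Writing $F \simeq \colim_i V_i$ as a colimit of derived $k$-affinoid spaces over $S$, the functor $- \times_S F$ preserves this colimit (the overcategory is an \inftopos, as recalled before \cref{lem:map_preserves_inf_cartesian}), while $\Map_{/S}(-,G)$ carries it to a limit. Commuting the two limits, the displayed map becomes $\lim_i$ of the corresponding maps for each $V_i$, so it is enough to treat $F = V$ with $V$ derived $k$-affinoid. In that case $T \times_S V$ is again derived $k$-affinoid, and I may apply the convergence of $G \to S$ directly to the affinoid $T \times_S V$ over $S$, obtaining
\[ \Map_{/S}(T \times_S V, G) \simeq \lim_{m} \Map_{/S}\big(\tau_{\le m}(T \times_S V), G\big). \]
The problem is thereby reduced to comparing the two towers $\{\tau_{\le m}(T \times_S V)\}_m$ and $\{\tau_{\le n}T \times_S V\}_n$ after applying $\Map_{/S}(-,G)$.

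To carry out the comparison I would set $W_n \coloneqq \tau_{\le n}T \times_S V$ and $Z_m \coloneqq \tau_{\le m}(T \times_S V)$, and apply convergence of $G \to S$ once more to each affinoid $W_n$, yielding $\Map_{/S}(W_n, G) \simeq \lim_m \Map_{/S}(\tau_{\le m} W_n, G)$. Viewing $(m,n) \mapsto \Map_{/S}(\tau_{\le m} W_n, G)$ as a diagram indexed by $(\mathbb N \times \mathbb N)^{\mathrm{op}}$, the cofinality of the diagonal $\mathbb N \to \mathbb N \times \mathbb N$ identifies $\lim_n \Map_{/S}(W_n, G)$ with $\lim_k \Map_{/S}(\tau_{\le k} W_k, G)$. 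The remaining ingredient is the truncation compatibility $\tau_{\le k} W_k \simeq Z_k$, that is, that the closed immersion $\tau_{\le k}T \times_S V \hookrightarrow T \times_S V$ induces an equivalence on $k$-truncations; granting this, $\lim_k \Map_{/S}(\tau_{\le k} W_k, G) \simeq \lim_k \Map_{/S}(Z_k, G)$, which is $\Map_{/S}(T \times_S V, G)$ by the previous paragraph, completing the argument.

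I expect the truncation compatibility $\tau_{\le k}(\tau_{\le k}T \times_S V) \simeq \tau_{\le k}(T \times_S V)$ to be the main obstacle. In the algebraic setting it is immediate, since the truncation of a relative tensor product depends only on the truncations of the factors; but, exactly as the authors stress for proper base change, analytic fibre products are not computed by algebraic tensor products. The cleanest route is to establish the connectivity statement that the morphism $\cO\alg_{T \times_S V} \to \cO\alg_{\tau_{\le k}T \times_S V}$ has a $k$-connected cofibre, using the explicit description of the structure sheaf of an analytic pullback in \cite[Proposition 3.17]{Porta_Yu_DNAnG_I} together with a closed embedding into a polydisc. This is precisely the mechanism underlying the special case \cref{lem:relative_disk_flat_I}, which I would generalize from products with polydiscs to arbitrary affinoid fibre products.
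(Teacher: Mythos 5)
Your proposal is correct and follows essentially the same route as the paper's proof: reduce to $S$ derived affinoid via \cite[6.1.3.9]{HTT}, reduce to $F$ representable, apply convergence of $G \to S$ to the affinoids $T \times_S V$ and $W_n = \mathrm t_{\le n}T \times_S V$, and conclude by the double-limit/diagonal argument hinging on the truncation compatibility $\mathrm t_{\le m}(\mathrm t_{\le n}T \times_S V) \simeq \mathrm t_{\le m}(T \times_S V)$ for $m \le n$. The step you single out as the main obstacle is exactly the one the paper asserts without proof (stated there, with the inequality apparently reversed, as holding for ``every fixed $n$ and every $m \ge n$''), so your plan to justify it via \cite[Proposition 3.17]{Porta_Yu_DNAnG_I} and a closed embedding into a polydisc, generalizing \cref{lem:relative_disk_flat_I}, supplies detail the paper omits rather than deviating from its argument.
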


\begin{proof}
	Thanks to \cite[6.1.3.9]{HTT}, we can assume $S$ to be a derived $k$-affinoid space.
	Let $V \in \dAfdk$ be a derived $k$-affinoid space over $S$.
	We have to check that the canonical map
	\[ \bfMap_S(F,G)(V) \longrightarrow \lim_n \bfMap_S(F,G)(\mathrm t_{\le n} V) \]
	is an equivalence.
	Reasoning as in the proof of \cref{lem:map_preserves_inf_cartesian}, we can reduce ourselves to the case where $F$ is a derived $k$-affinoid space over $S$.
	In this case, it suffices to check that the map
	\[ \Map_{/S}(F \times_S V, G) \longrightarrow \lim_n \Map_{/S}(F \times_S \mathrm t_{\le n} V, G) \]
	is an equivalence.
	Consider the commutative diagram
	\[ \begin{tikzcd}
		\Map_{/S}(F \times_S V, G) \arrow{d}{g_1} \arrow{r}{f_1} & \lim_m \Map_{/S}(\mathrm t_{\le m}(F \times_S V), G) \arrow{d}{g_2} \\
		\lim_n \Map_{/S}(F \times_S \mathrm t_{\le n} V, G) \arrow{r}{f_2} & \lim_{m,n} \Map(\mathrm t_{\le m}( F \times_S \mathrm t_{\le n} V), G).
	\end{tikzcd} \]
	Since the morphism $G \to S$ is convergent, the morphisms $f_1$ and $f_2$ are equivalences.
	Moreover, for every fixed $n$ and every $m \ge n$, the map
	\[ \mathrm t_{\le m}(F \times_S \mathrm t_{\le n} V) \longrightarrow \mathrm t_{\le m}(F \times_S V) \]
	is an equivalence.
	It follows that $g_2$ is an equivalence.
	Therefore, $g_1$ is an equivalence as well.
\end{proof}

\begin{defin}
	A derived \kanal stack $F$ is said to be \emph{locally of finite presentation} if its analytic cotangent complex is perfect.
\end{defin}

\begin{lem} \label{lem:map_cotangent_complex}
	Let $f \colon X \to S$ be a morphism in $\St(\dAfdk, \tauet)$ representable by proper flat derived \kanal spaces.
	Let $g \colon Y \to S$ be a morphism in $\St(\dAfdk, \tauet)$ representable by derived \kanal stacks locally of finite presentation.
	Then the canonical map $\bfMap_S(X,Y) \to S$ admits a relative analytic cotangent complex which is furthermore perfect.
\end{lem}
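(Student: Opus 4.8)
The plan is to produce an explicit formula for the relative analytic cotangent complex and then read off perfectness from the results of \cref{sec:plus_pushforward}. Since a relative analytic cotangent complex is tested against square-zero extensions of derived affinoid base points, I would fix a derived $k$-affinoid $T$ over $S$, a point $\phi \in \bfMap_S(X,Y)(T)$ — that is, a $T$-morphism $\phi \colon X_T \to Y_T$, equivalently an $S$-morphism $X_T \to Y$ — together with a module $M \in \Coh^{\ge 1}(T)$, and compute the derivation space
\[ \mathrm{Der}_\phi(M) \coloneqq \fib_\phi\big( \bfMap_S(X,Y)(T[M]) \to \bfMap_S(X,Y)(T) \times_{S(T)} S(T[M]) \big). \]
Using $Y_{T[M]} = Y \times_S T[M]$, a point of $\bfMap_S(X,Y)(T[M])$ lying over the canonical base map is the same datum as an $S$-morphism $X_{T[M]} \to Y$, so $\mathrm{Der}_\phi(M)$ is the space of extensions of $\phi$ along the closed immersion $X_T \hookrightarrow X_{T[M]}$. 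The formula I expect to establish is
\[ x^* \anL_{\bfMap_S(X,Y)/S} \simeq \pi_+\big( \phi^* \anL_{Y/S} \big), \]
where $\pi \colon X_T \to T$ is the projection and $\pi_+$ is the plus pushforward of \cref{def:plus_pushforward}.

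The heart of the argument is the identification of $\mathrm{Der}_\phi(M)$. First I would show that $X_{T[M]}$ is the split square-zero extension $X_T[\pi^* M]$: since $X$ is flat over $S$, the base change $X_{T[M]} = X_T \times_T T[M]$ of the split square-zero extension $T[M]$ is again split square-zero, now by $\pi^* M$ — a relative version of \cref{lem:pullback_split_square_zero_extension}; flatness of $\pi$ also guarantees $\pi^* M \in \Coh^{\ge 1}(X_T)$. Then, because $Y \to S$ admits a relative analytic cotangent complex (being locally of finite presentation, $\anL_{Y/S}$ is even perfect), the deformation theory of $S$-maps into $Y$ developed in \cite{Porta_Yu_Representability} gives
\[ \mathrm{Der}_\phi(M) \simeq \Map_{\Coh^+(X_T)}\big( \phi^* \anL_{Y/S}, \pi^* M \big). \]
Finally I would apply the adjunction $\pi_+ \dashv \pi^*$ established after \cref{def:plus_pushforward} — with $\phi^*\anL_{Y/S} \in \Perf(X_T)$ and $M \in \Coh^+(T)$ — to rewrite this as $\Map_{\Coh^+(T)}(\pi_+(\phi^*\anL_{Y/S}), M)$, which exhibits $\pi_+(\phi^*\anL_{Y/S})$ as the corepresenting object.

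It remains to verify the hypotheses needed for $\pi_+$ and to glue the pointwise objects. The projection $\pi \colon X_T \to T$ is proper as a base change of the proper morphism $f$, has finite tor-amplitude because flatness of $f$ is preserved by base change, and has finite coherent cohomological dimension since it is representable by derived \kanal spaces whose truncations are \kanal spaces (by the lemma following \cref{def:coherent_cohomological_dimension}); hence $\pi_+$ is defined on $\Perf(X_T)$. Perfectness of the cotangent complex then follows from \cref{prop:perfect_pushforward}, as $\phi^*\anL_{Y/S}$ is perfect and $\pi_+$ preserves perfect complexes. Functoriality in $(T,\phi)$ — which upgrades the family $\pi_+(\phi^*\anL_{Y/S})$ to a genuine global relative cotangent complex — is supplied by the base-change compatibility of the plus pushforward, \cref{prop:plus_pushforward_base_change}.

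I expect the main obstacle to be the deformation-theoretic identification in the second step: setting up the relative split square-zero extension $X_{T[M]} \simeq X_T[\pi^*M]$ carefully, where the flatness of $X/S$ is essential, and correctly invoking the cotangent complex of the stack $Y/S$ to obtain the mapping-space description of $\mathrm{Der}_\phi(M)$. Once this is in place, the adjunction $\pi_+ \dashv \pi^*$ together with the preservation and base-change properties of the plus pushforward make the remainder of the argument formal.
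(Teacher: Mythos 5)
Your proposal is correct and follows essentially the same route as the paper: the cotangent complex is identified as $\pi_+$ applied to the pullback of $\anL_{Y/S}$ along the map classified by a point of the mapping stack, the derivation space is computed via the adjunction $\pi_+ \dashv \pi^*$, perfectness comes from \cref{prop:perfect_pushforward}, and \cref{prop:plus_pushforward_base_change} supplies the compatibility needed to globalize. The only difference is presentational — the paper first defines the global object $\pi_+(\mathrm{ev}^*(\anL_{Y/S}))$ on $X \times_S \bfMap_S(X,Y)$ and then checks it pointwise by base change, whereas you compute pointwise and then glue — and your proposal usefully spells out the deformation-theoretic step (the split square-zero identification $X_{T[M]} \simeq X_T[\pi^* M]$ via flatness) that the paper compresses into ``unraveling the definitions.''
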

\begin{proof}
	The statement is local on $S$, so we can assume $S$ to be derived affinoid.
	Consider the canonical maps
	\[ \begin{tikzcd}
		X \times_S \bfMap_S(X, Y) \arrow{r}{\mathrm{ev}} \arrow{d}{\pi} & Y \\
		\bfMap_S(X, Y). & 
	\end{tikzcd} \]
	Since $Y$ is locally of finite presentation, its cotangent complex $\anL_{Y/S}$ is perfect.
	Since $f$ is representable by proper flat derived \kanal spaces, so is $\pi$.
	We can therefore set
	\[ \cF \coloneqq \pi_+ ( \mathrm{ev}^*( \anL_{Y/S} ) ) .  \]
	\Cref{prop:perfect_pushforward} implies that $\cF$ is a perfect complex on $\bfMap_S(X,Y)$.
	We claim that it is the relative cotangent complex of the map $\bfMap_S(X,Y) \to S$.
	
	Fix a map $u \colon U \to \bfMap_S(X,Y)$ from a derived $k$-affinoid space.
	Denote by $f_u \colon X \times_S U \to Y$ the morphism classified by $u$.
	Notice that we have a pullback square
	\[ \begin{tikzcd}
		X \times_S U \arrow{r}{q} \arrow{d}{\pi_u} & X \times_S \bfMap_S(X,Y) \arrow{d}{\pi} \\
		U \arrow{r}{u} & \bfMap_S(X,Y),
	\end{tikzcd} \]
	and that $f_u \simeq \mathrm{ev} \circ q$.
	
	Let $\cG \in \Coh^{\ge 0}(U)$.
	Unraveling the definitions, we see that
	\begin{multline*}
	\DerAn_{\bfMap_S(X,Y) / S}(U, \cG) \simeq \Map_{\Coh^+(X \times_S U)}(f_u^* \anL_{Y/S}, \pi_u^*(\cG))\\
	\simeq \Map_{\Coh^+(U)}(\pi_{u+} f_u^* \anL_{Y/S}, \cG) .
	\end{multline*}
	The base change property of $\pi_+$ given by \cref{prop:plus_pushforward_base_change} implies that
	\[ \pi_{u+} f_u^* \anL_{Y/S} \simeq u^*( \pi_+ \mathrm{ev}^*( \anL_{Y/S} ) ) . \]
	It follows that $\pi_+ \mathrm{ev}^*( \anL_{Y/S} )$ is the analytic cotangent complex of the map
	\[\bfMap_S(X,Y) \to S.\]
\end{proof}

\begin{prop} \label{prop:Map_conditional}
	Let $S$ be a rigid \kanal space.
	Let $X$ be a proper flat rigid \kanal space over $S$.
	Let $Y$ be a derived \kanal stack locally of finite presentation over $S$.
	Assume that $\trunc(\bfMap_S(X,Y))$ is representable by a rigid \kanal stack.
	Then $\bfMap_S(X,Y)$ is representable by a derived \kanal stack locally of finite presentation over $S$.
\end{prop}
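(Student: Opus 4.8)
The plan is to verify that $\bfMap_S(X,Y)$ satisfies the hypotheses of the representability theorem \cite[Theorem 1.1]{Porta_Yu_Representability}. Regarding $X$, $Y$ and $S$ as stacks over the site $(\dAfdk,\tauet)$ via the Yoneda embedding, the mapping functor $\bfMap_S(X,Y)$ is the internal hom constructed at the beginning of this section, and in particular a stack over $(\dAfdk,\tauet)$. The criteria to check are then: that $\bfMap_S(X,Y)$ is infinitesimally cartesian, that it is convergent, that it admits a global analytic cotangent complex, and that its truncation is an (underived) \kanal stack. The last of these is exactly the running hypothesis, so nothing further is needed there.

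First I would record that the structure morphism $Y \to S$ is both infinitesimally cartesian and convergent, since $Y$ is a derived \kanal stack and every such stack enjoys these properties (cf.\ \cite[Lemma 7.7]{Porta_Yu_Representability} together with the convergence of derived analytic stacks). Applying \cref{lem:map_preserves_inf_cartesian} with $G = Y$ shows that $\bfMap_S(X,Y) \to S$ is infinitesimally cartesian, and \cref{lem:map_convergent} shows that it is convergent. Since $S$ is itself a rigid \kanal space it is infinitesimally cartesian and convergent, and as both properties are stable under composition of morphisms, $\bfMap_S(X,Y)$ is infinitesimally cartesian and convergent as an absolute stack.

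The main work is the global analytic cotangent complex, and here I would invoke \cref{lem:map_cotangent_complex}. Viewing $X \to S$ as a morphism representable by proper flat derived \kanal spaces (an underived proper flat space is in particular such) and $Y \to S$ as representable by derived \kanal stacks locally of finite presentation, the lemma produces a relative analytic cotangent complex $\anL_{\bfMap_S(X,Y)/S} \simeq \pi_+(\mathrm{ev}^* \anL_{Y/S})$, which is moreover perfect. Combining this relative cotangent complex with the analytic cotangent complex of $S$ via the usual fiber sequence yields a global analytic cotangent complex for $\bfMap_S(X,Y)$. This is the step resting on the heavy machinery of the paper — the plus pushforward of \cref{def:plus_pushforward}, which in turn relies on the projection formula (\cref{thm:projection_formula}) and proper base change (\cref{thm:proper_base_change}) — so I expect it to be the genuine obstacle, although it has already been dispatched by \cref{lem:map_cotangent_complex}.

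With all four criteria in hand, the representability theorem \cite[Theorem 1.1]{Porta_Yu_Representability} implies that $\bfMap_S(X,Y)$ is a derived \kanal stack. Finally, local finite presentation over $S$ is immediate: by definition it suffices that the relative analytic cotangent complex $\anL_{\bfMap_S(X,Y)/S}$ be perfect, which is precisely the perfectness furnished by \cref{lem:map_cotangent_complex}. This completes the plan.
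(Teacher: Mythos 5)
Your proposal is correct and takes essentially the same route as the paper: the paper's proof simply combines \cref{lem:map_preserves_inf_cartesian}, \cref{lem:map_convergent}, \cref{lem:map_cotangent_complex} with the representability theorem of \cite{Porta_Yu_Representability}, with the hypothesis on $\trunc(\bfMap_S(X,Y))$ supplying the remaining criterion. The additional details you spell out (that $Y \to S$ is infinitesimally cartesian and convergent, and that local finite presentation follows from the perfectness of the relative cotangent complex furnished by \cref{lem:map_cotangent_complex}) are precisely the content implicit in those citations.
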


\begin{proof}
	This follows from Lemmas \ref{lem:map_preserves_inf_cartesian}, \ref{lem:map_convergent} and \ref{lem:map_cotangent_complex} and from \cite[Theorem 7.1]{Porta_Yu_Representability}.
\end{proof}

\begin{thm} \label{thm:Map}
	Let $S$ be a rigid \kanal space.
	Let $X, Y$ be rigid \kanal spaces over $S$.
	Assume that $X$ is proper and flat over $S$, and that $Y$ is separated over $S$.
	Then the $\infty$-functor $\bfMap_S(X,Y)$ is representable by a derived \kanal space separated over $S$.
\end{thm}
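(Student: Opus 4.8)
The plan is to deduce the theorem from the conditional representability result \cref{prop:Map_conditional}, after bridging the gap between the hypotheses of that proposition (where $Y$ is required to be locally of finite presentation over $S$) and the present weaker hypothesis that $Y$ is merely separated over $S$. First I would record the input on truncations: by Conrad--Gabber \cite[Proposition 5.3.3]{Conrad_Spreading-out}, the underived functor $\bfHom_S(X,Y)$ is representable by a rigid \kanal space $M$ separated over $S$, and by construction $\trunc(\bfMap_S(X,Y)) \simeq \bfHom_S(X,Y) = M$. This simultaneously supplies the truncation hypothesis needed to invoke \cref{prop:Map_conditional} and provides the classical space $M$ along which I will localise the remaining argument. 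Since being representable by a derived \kanal space is local on the truncation, it suffices to produce, for each point of $M$, an affinoid subdomain $M_i \subseteq M$ over which $\bfMap_S(X,Y)$ is a derived \kanal space.

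The heart of the matter --- and the step I expect to be the main obstacle --- is the reduction to a target that is locally of finite presentation, which a general separated $Y$ need not be (its analytic cotangent complex $\anL_{Y/S}$ fails to be perfect unless $Y$ is, say, smooth, so \cref{lem:map_cotangent_complex} cannot be applied to $Y$ directly). Here I would exploit the properness of $X$: over a sufficiently small affinoid $M_i$, the universal map $X_{M_i} \to Y$ has proper, hence compact, image, so after shrinking $M_i$ it factors through an affinoid subdomain $V_i = \Sp(A_i) \subseteq Y$, and $\bfMap_S(X,Y)|_{M_i}$ coincides with $\bfMap_S(X,V_i)|_{M_i}$. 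Each $V_i$ admits a closed immersion $V_i \hookrightarrow \bD^{n_i}_k$ into a polydisk, which is smooth and therefore locally of finite presentation over $S$ (after the evident base change). Since $X$ is proper over $S$, the condition that a map $X_T \to \bD^{n_i}_k$ factor through the closed subspace $V_i$ is a closed condition on $T$; this is the analytic counterpart of the classical fact that $\bfMap_S(X,-)$ carries closed immersions to closed immersions, and it is the delicate point, because pullbacks of affinoids are not computed by naive tensor products, so its justification rests on the proper base change of \cref{thm:proper_base_change}. Consequently $\bfMap_S(X,V_i)$ is realised as a closed subspace of the relative mapping space $\bfMap_S(X,\bD^{n_i}_k)$.

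Now I would apply \cref{prop:Map_conditional} to the smooth target $\bD^{n_i}_k$: its truncation $\bfHom_S(X,\bD^{n_i}_k)$ is representable (again by Conrad--Gabber, or directly as the space of $n_i$-tuples of global functions on $X$ of norm at most one, using that $X$ is proper and flat), and $\bD^{n_i}_k$ is locally of finite presentation, so $\bfMap_S(X,\bD^{n_i}_k)$ is representable by a derived \kanal stack. A closed subspace of a derived \kanal space is again a derived \kanal space, so $\bfMap_S(X,V_i)$, and hence $\bfMap_S(X,Y)|_{M_i}$, is a derived \kanal space. Gluing over the cover $\{M_i\}$ of $M$ then shows that $\bfMap_S(X,Y)$ is a derived \kanal space.

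Finally I would verify the two refinements in the statement. That $\bfMap_S(X,Y)$ is a space rather than a higher stack follows because its geometric level is governed by that of its truncation $M$, which is a genuine rigid analytic space with trivial stabilisers. For separatedness over $S$, the relative diagonal of $\bfMap_S(X,Y)$ is identified with $\bfMap_S(X,\Delta_{Y/S})$, where $\Delta_{Y/S}\colon Y \to Y \times_S Y$ is the diagonal of $Y$; since $Y$ is separated over $S$ this diagonal is a closed immersion, and the same properness-plus-proper-base-change argument as above shows that applying $\bfMap_S(X,-)$ yields a closed immersion. Hence $\bfMap_S(X,Y)$ is separated over $S$, completing the proof.
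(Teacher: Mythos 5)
Your opening move (identifying $\trunc \bfMap_S(X,Y)$ with $\bfHom_S(X,Y)$ and invoking Conrad--Gabber for its representability and separatedness) is exactly how the paper's proof begins; but the paper then concludes in a single stroke by feeding this truncation statement into \cref{prop:Map_conditional} applied to $Y$ itself, with no reduction on the target. Your instinct that there is tension with the hypothesis of local finite presentation in \cref{prop:Map_conditional} is reasonable --- \cref{lem:map_cotangent_complex} really does use perfectness of $\anL_{Y/S}$ to form the plus pushforward --- but the detour you construct to resolve it contains a step that is simply false, so the proposal does not constitute a proof.

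The fatal step is the claim that, since $X_{M_i} \to M_i$ is proper, the universal map $X_{M_i} \to Y$ has compact image and hence, after shrinking $M_i$, factors through an \emph{affinoid} subdomain $V_i \subseteq Y$. Quasi-compactness of the image only yields that it is covered by finitely many affinoid subdomains, not that it lies in a single one. Take $S = \Sp k$, $X = Y = \mathbb{P}^{1,\mathrm{an}}_k$, and the point of $M$ classifying the identity map: its image is all of $\mathbb{P}^{1,\mathrm{an}}_k$, which is contained in no affinoid subdomain, and no shrinking of $M_i$ around this point helps. Since everything downstream (the closed embedding $V_i \hookrightarrow \bD^{n_i}_k$, the realization of $\bfMap_S(X,V_i)$ inside $\bfMap_S(X,\bD^{n_i}_k)$) requires $V_i$ to be affinoid, the reduction collapses. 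Two further points. First, even where such a factorization exists, ``factoring through a closed immersion'' is not merely a closed condition in the derived world: closed immersions of derived spaces are not monomorphisms of stacks (the space of factorizations of a map through $\Sp k \hookrightarrow \bD^1_k$ is a torsor under a loop space, not empty-or-contractible), so the assertion that $\bfMap_S(X,V_i) \to \bfMap_S(X,\bD^{n_i}_k)$ is representable by closed immersions, with the correct derived structure on the source, is itself a substantial claim that \cref{thm:proper_base_change} alone does not deliver --- it handles the classical ideal-vanishing argument on truncations, not the homotopy coherences. Second, your closing diagonal argument for separatedness is unnecessary and inherits the same problematic claim: since $\trunc$ commutes with fiber products, the diagonal of $\bfMap_S(X,Y)$ is a closed immersion if and only if the diagonal of its truncation is, so separatedness is read off directly from the Conrad--Gabber space, which is how the paper concludes.
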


\begin{proof}
	For any rigid $k$-affinoid space $T$, we have
	\begin{multline*}
	\bfMap_S(X,Y)(T) \simeq \Map_{\St(\dAfdk, \tauet)_{/S}}(X \times T, Y) \simeq \Hom_{\An_{k/S}}(X \times T, Y)\\
	\simeq \bfHom_S(X,Y)(T) .
	\end{multline*}
	In other words, the truncation $\trunc \bfMap(X,Y)$ is equivalent to the ordinary Hom functor from $X$ to $Y$ over $S$.
	By \cite[Proposition 5.3.3]{Conrad_Spreading-out}, the functor $\bfHom_S(X,Y)$ is representable by a rigid \kanal space separated over $S$.
	Therefore, it follows from \cref{prop:Map_conditional} that $\bfMap(X,Y)$ is representable by a derived \kanal space separated over $S$.
\end{proof}

\section{Appendix: $n$-cofinality}

The $\infty$-categorical version of cofinality differs significantly from the classical one.
Indeed, let $f \colon \cC \to \cD$ be a functor between $\infty$-categories.
Then \cite[4.1.3.1]{HTT} implies that $f$ is cofinal if and only if $\cC \times_{\cD} \cD_{D/}$ is weakly contractible for every $D \in \cD$.
On the other hand, we know that if $\cC$ and $\cD$ are $1$-categories, then it is enough to ask $\cC \times_{\cD} \cD_{D/}$ to be non-empty and weakly connected.
For example, the inclusion $\mathbf \Delta_{\le 2}\op \hookrightarrow \mathbf \Delta\op$ is cofinal in the classical sense but not in the $\infty$-categorical one.

In between there the cases where $\cC \times_{\cD} \cD_{D/}$ is weakly $n$-connected for some integer $n \ge 2$.
There is a corresponding intermediate notion of $n$-cofinality, which is known among the experts but that has never appeared in the literature.
The goal of this appendix is to introduce this notion and prove its basic properties.

\begin{defin}
	A functor $p \colon \cC \to \cD$ of $\infty$-categories is \emph{$n$-cofinal} if for any right fibration $\cE \to \cD$ with $n$-truncated fibers the map
	\[ \Map_\cD(\cD, \cE) \to \Map_\cD(\cC, \cE) \]
	is a homotopy equivalence.
\end{defin}

\begin{prop} \label{prop:n_cofinal_colimit}
	Let $p \colon \cC \to \cD$ be an $n$-cofinal functor between $\infty$-categories.
	Then for any $(n+1)$-category $\cE$ and any diagram $f \colon \cD \to \cE$, the induced map $\cE_{f/} \to \cE_{f \circ p/}$ is an equivalence of $\infty$-categories.
\end{prop}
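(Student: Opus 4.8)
The plan is to show that the comparison functor $\theta \colon \cE_{f/} \to \cE_{f \circ p/}$ is an equivalence of $\infty$-categories by checking separately that it is fully faithful and essentially surjective, reducing both to the defining property of $n$-cofinality applied to a family of right fibrations over $\cD$ indexed by the objects of $\cE$. Concretely, for each $b \in \cE$ I would let $R_b \to \cD$ be the right fibration obtained by unstraightening the presheaf
\[ G_b \colon \cD\op \to \cS, \qquad d \longmapsto \Map_\cE(f(d), b) . \]
Because $\cE$ is an $(n+1)$-category, each space $\Map_\cE(f(d), b)$ is $n$-truncated, so $R_b \to \cD$ has $n$-truncated fibers and is therefore an allowable test object in the definition of $n$-cofinality. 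The key identification, which I would set up first, is that the space of sections $\Map_\cD(\cD, R_b)$ is canonically the space of cones $\Map_{\Fun(\cD, \cE)}(f, \underline b)$ on the constant diagram $\underline b$, while $\Map_\cD(\cC, R_b)$ is the cone space $\Map_{\Fun(\cC, \cE)}(f \circ p, \underline b)$, and that under these identifications the restriction map appearing in the definition of $n$-cofinality becomes precomposition with $p$. Thus the hypothesis that $p$ is $n$-cofinal says precisely that
\[ p^* \colon \lim_{d \in \cD} \Map_\cE(f(d), b) \longrightarrow \lim_{c \in \cC} \Map_\cE(f(p(c)), b) \]
is a homotopy equivalence for every $b \in \cE$.

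With this dictionary in place, I would prove full faithfulness as follows. For objects $\alpha = (a, \eta_a)$ and $\beta = (b, \eta_b)$ of $\cE_{f/}$, i.e.\ cone points $a, b$ together with cones $\eta_a, \eta_b$, the standard formula for mapping spaces in an undercategory (cf.\ \cite{HTT}) gives a fiber sequence
\[ \Map_{\cE_{f/}}(\alpha, \beta) \simeq \Map_\cE(a,b) \times_{\Map_{\Fun(\cD, \cE)}(f, \underline b)} \{\eta_b\}, \]
with structure map $\phi \mapsto \underline\phi \circ \eta_a$, together with its analogue over $\cC$. The functor $\theta$ then fits into a commuting square whose two horizontal maps target $\lim_\cD G_b$ and $\lim_\cC G_b$ respectively, whose left vertical map is the identity of $\Map_\cE(a,b)$, and whose right vertical map is exactly the restriction $p^*$. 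Since $p^*$ is an equivalence, the induced map on fibers—that is, $\theta$ on mapping spaces—is an equivalence.

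For essential surjectivity, an object of $\cE_{f \circ p/}$ is a pair $(e, \zeta)$ with $\zeta \in \Map_\cD(\cC, R_e)$; because $p^* \colon \Map_\cD(\cD, R_e) \to \Map_\cD(\cC, R_e)$ is an equivalence, in particular surjective on $\pi_0$, I can choose a cone $\eta \in \Map_\cD(\cD, R_e)$ with $p^* \eta \simeq \zeta$. Then $(e, \eta) \in \cE_{f/}$ satisfies $\theta(e, \eta) \simeq (e, \zeta)$. Combining the two steps with the usual criterion that a fully faithful and essentially surjective functor is an equivalence yields the proposition.

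The main obstacle I anticipate is purely the bookkeeping in the key identification: matching the section spaces $\Map_\cD(\cD, R_b)$ and $\Map_\cD(\cC, R_b)$ from the definition of $n$-cofinality with the cone spaces $\Map_{\Fun(\cD, \cE)}(f, \underline b)$ and $\Map_{\Fun(\cC, \cE)}(f \circ p, \underline b)$ through straightening/unstraightening, and verifying with the correct variance that the comparison map is genuinely precomposition with $p$. Once this is pinned down and the $(n+1)$-category hypothesis is used only to guarantee $n$-truncated fibers, both the fully-faithful and the essentially-surjective steps are formal.
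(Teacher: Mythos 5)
Your proof is correct, and its engine coincides with the paper's: both arguments apply $n$-cofinality to the right fibration $R_b \simeq \cD \times_\cE \cE_{/b} \to \cD$ (the unstraightening of $d \mapsto \Map_\cE(f(d), b)$), whose fibers are $n$-truncated exactly because $\cE$ is an $(n+1)$-category, and both identify its section spaces over $\cD$ and over $\cC$ with the cone spaces $\Map_{\Fun(\cD,\cE)}(f, \underline{b})$ and $\Map_{\Fun(\cC,\cE)}(f \circ p, \underline{b})$. Where you diverge is the concluding step. The paper observes that these cone spaces are precisely the fibers over $b$ of the left fibrations $\cE_{f/} \to \cE$ and $\cE_{f \circ p/} \to \cE$ (via \cite[4.1.1.5]{HTT}), so $n$-cofinality yields a fiberwise equivalence, and a single citation of the fiberwise criterion for maps of left fibrations (\cite[2.4.7.12]{HTT}) finishes the proof at once. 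You instead decompose the conclusion into essential surjectivity --- which is exactly the $\pi_0$-part of the paper's fiberwise equivalence --- and full faithfulness, for which you need an ingredient the paper never touches: the fiber-sequence description
\[ \Map_{\cE_{f/}}(\alpha, \beta) \simeq \fib_{\eta_b}\bigl( \Map_\cE(a,b) \to \Map_{\Fun(\cD,\cE)}(f, \underline{b}) \bigr) \]
of mapping spaces in an undercategory, together with its compatibility with restriction along $p$ (your commuting square with the identity on the left and $p^*$ on the right, which is a homotopy pullback since both vertical maps are equivalences, hence induces an equivalence on fibers). Both routes are sound. The paper's is shorter because the left-fibration criterion delivers fully faithfulness and essential surjectivity in one stroke; yours trades that citation for an explicit mapping-space computation, which has the merit of making visible exactly where the equivalence of cone spaces enters each half of the argument. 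The bookkeeping you flag --- matching section spaces with cone spaces under straightening, and checking that the comparison map is precomposition with $p$ --- is genuine but standard, and it is the same bookkeeping the paper performs implicitly when it asserts that $\cE_{f/} \to \cE_{f\circ p/}$ ``induces an equivalence of the fibers''.
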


\begin{proof}
	We follow the proof of \cite[4.1.1.8]{HTT}.
	By \cite[4.1.1.5]{HTT}, for every $x \in \cE$, we have identifications
	\[ \cE_{f/} \times_\cE \{x\} \simeq \Map_\cE(\cD, \cE_{/x}), \qquad \cE_{f \circ p /} \times_\cE \{x\} \simeq \Map_\cE(\cC, \cE_{/x}) . \]
	Recall from \cite[Proposition 8.2]{Porta_Yu_DNAnG_I} that $\cE_{/x}$ is an $n$-category.
	In other words, the projection $\cE_{/x} \to \cE$ is a right fibration with $n$-truncated fibers.
	It follows that the natural map
	\[ \cE_{f/} \longrightarrow \cE_{p \circ f/} \]
	induces an equivalence of the fibers over every object $x \in \cE$.
	The conclusion now follows because both $\cE_{f/}$ and $\cE_{p \circ f/}$ are left fibered over $\cE$ (see \cite[2.4.7.12]{HTT}).
\end{proof}

\begin{prop} \label{prop:n_connected_Quillen_Theorem_A}
	Let $f \colon \cC \to \cD$ be a functor between $\infty$-categories.
	Suppose that for every object $D \in \cD$, the $\infty$-category $\cC \times_\cD \cD_{D/}$ is weakly $(n+1)$-connected.
	Then $f$ is $n$-cofinal.
\end{prop}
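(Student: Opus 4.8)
The plan is to unwind the definition of $n$-cofinality into a single connectivity statement in the presheaf $\infty$-topos $\PSh(\cD)$, and then read the hypothesis off from the pointwise formula for a left Kan extension. First I would use straightening \cite[2.2.1.2]{HTT} to identify a right fibration $\cE \to \cD$ with $n$-truncated fibers with a functor $G \colon \cD\op \to \cS$ taking $n$-truncated values, and the two section spaces with limits: $\Map_\cD(\cD, \cE) \simeq \lim_{\cD\op} G$ and $\Map_\cD(\cC, \cE) \simeq \lim_{\cC\op}(f\op)^* G$ (the latter because the pullback of $\cE$ along $f$ is classified by $G \circ f\op$); see \cite[3.3.3.2]{HTT}. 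Thus $f$ is $n$-cofinal if and only if the canonical map $\lim_{\cD\op} G \to \lim_{\cC\op}(f\op)^* G$ is an equivalence for every such $G$.

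Next I would rewrite this map using the adjunction $(f\op)_! \dashv (f\op)^*$ between presheaf categories. Writing $\ast_\cD$ and $\ast_\cC$ for the terminal presheaves and using that $(f\op)^*$ preserves the terminal object, the two limits become $\Map_{\PSh(\cD)}(\ast_\cD, G)$ and $\Map_{\PSh(\cD)}((f\op)_!\ast_\cC, G)$, and the comparison map is precomposition with the counit
\[ \theta \colon (f\op)_!\ast_\cC \longrightarrow \ast_\cD . \]
Hence $f$ is $n$-cofinal as soon as $\theta$ is inverted by $\Map_{\PSh(\cD)}(-, G)$ for every $G$ with $n$-truncated values, i.e.\ as soon as $\theta$ is $(n+1)$-connective; here I invoke the standard fact that in any $\infty$-topos the $n$-truncated objects are precisely those local with respect to $(n+1)$-connective morphisms, equivalently that $\tau_{\le n}$ inverts exactly the $(n+1)$-connective maps \cite[\S 6.5.1]{HTT}.

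It then remains to verify that $\theta$ is $(n+1)$-connective. Since both connectivity and truncation are computed objectwise in $\PSh(\cD) = \Fun(\cD\op, \cS)$, it suffices to check that each $\theta_D \colon ((f\op)_!\ast_\cC)(D) \to \ast$ is $(n+1)$-connective. The pointwise formula for the left Kan extension \cite[4.3.3.2]{HTT} identifies the source with the colimit of the constant diagram over the relevant comma $\infty$-category, namely the classifying space
\[ ((f\op)_!\ast_\cC)(D) \simeq \abs{\cC\op \times_{\cD\op} (\cD\op)_{/D}} \simeq \abs{\cC \times_\cD \cD_{D/}} , \]
where the second equivalence uses $(\cD\op)_{/D} \simeq (\cD_{D/})\op$ and the invariance of the classifying space under passage to opposites. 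By hypothesis $\cC \times_\cD \cD_{D/}$ is weakly $(n+1)$-connected, so $\abs{\cC \times_\cD \cD_{D/}}$ is $(n+1)$-connected; consequently $\theta_D$ is $(n+1)$-connective, and the proof concludes.

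The main obstacle, and essentially the only point requiring care, is the variance bookkeeping: one must track the four opposites involved so that the under-category $\cC \times_\cD \cD_{D/}$ of the hypothesis is matched with the value of $(f\op)_!\ast_\cC$ — a \emph{left} Kan extension, whose pointwise formula produces exactly this under-category once the opposites cancel. (A naive attempt through the right Kan extension $\Map_\cD(\cC,\cE)\simeq\lim_{\cD\op}\mathrm{Ran}_{f\op}(f\op)^*G$ instead yields the over-categories $\cC\times_\cD\cD_{/D}$, which give only a sufficient, non-sharp criterion, so the adjunction must be set up on the correct side.) One must also keep the connectivity indexing consistent with the chosen truncation conventions; the stated hypothesis supplies slightly more connectivity than is strictly needed, leaving a comfortable margin.
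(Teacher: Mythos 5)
Your proof is correct, but it follows a genuinely different route from the one in the paper. The paper's proof stays at the level of fibrations: following the proof of \cite[4.1.3.1]{HTT}, it first reduces to the case where $f$ itself is a cartesian fibration with weakly $(n+1)$-connected fibers; then, given a right fibration $\cE \to \cD$ with $n$-truncated fibers, it constructs the relative mapping object $\bfMap_\cD(\cC,\cE)$, checks by hand that this is a cartesian fibration over $\cD$ by exhibiting cartesian lifts, and identifies it with $\cE \to \cD$ through the fiberwise equivalences $\Fun(\cC_y,\cE_y) \simeq \Fun(\mathrm{EnvGpd}(\cC_y),\cE_y) \simeq \cE_y$; comparing section spaces then gives $n$-cofinality. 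You instead straighten everything into $\PSh(\cD)$, identify the restriction map on section spaces with precomposition along the counit $\theta \colon (f\op)_!\ast_\cC \to \ast_\cD$, compute $\theta$ objectwise via the pointwise formula for left Kan extensions as $\lvert \cC \times_\cD \cD_{D/} \rvert \to \ast$, and conclude by the orthogonality between $(n+1)$-connective morphisms and $n$-truncated objects. That orthogonality is exactly the ingredient the paper uses fiberwise (an $n$-truncated space sees a highly connected groupoid as a point); you apply it once, globally in the presheaf $\infty$-topos, which lets you bypass both the reduction to cartesian fibrations and the verification of cartesian lifts, and which makes transparent where the under-categories enter (through the formula for $(f\op)_!$) and that weak $n$-connectedness of them already suffices — the sharp form of the statement, as you observe. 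What the paper's argument buys in exchange is independence from the straightening equivalence and the Kan extension formula, staying closer to Lurie's original proof and to the raw definition of $n$-cofinality. One small caution: your parenthetical claim that $\tau_{\le n}$ inverts \emph{exactly} the $(n+1)$-connective maps is false for general morphisms (a degree-two self-map of the circle is inverted by $\tau_{\le 0}$ but is not $1$-connective); only the implication you actually use — that $(n+1)$-connective maps are inverted by $\Map(-,Z)$ for $n$-truncated $Z$ — holds in general, and since your $\theta$ has terminal target the two conditions do coincide there, so the argument is unaffected.
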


\begin{proof}
	The same proof of \cite[4.1.3.1]{HTT} shows that it is enough to prove the following statement: if $f \colon \cC \to \cD$ is a cartesian fibration of $\infty$-categories whose fibers are weakly $(n+1)$-connected, then $f$ is $n$-cofinal.
	The following is a model-independent formulation of the proof given in \cite[4.1.3.2]{HTT} (in the case $n = \infty$).
	
	Let $p \colon \cE \to \cD$ be a right fibration with $n$-truncated fibers.
	Consider the functor
	\[ F \colon (\Cat_\infty)_{/\cD} \to \Cat_\infty \]
	informally given by
	\[ F(\cT) \coloneqq \Fun_\cD(\cC \times_\cD \cT, \cE) . \]
	Since $\Cat_\infty$ is cartesian closed, the functor $F$ commutes with colimits.
	Therefore, the adjoint functor theorem implies that $F$ is representable by an object in $(\Cat_\infty)_{/\cD}$, which we denote by $\bfMap_\cD(\cC, \cE)$.
	Let
	\[ q \colon \bfMap_\cD(\cC, \cE) \longrightarrow \cD \]
	be the structural morphism.
	We claim that $q$ is a cartesian fibration.
	Let $a \colon x \to y$ be an arrow in $\cD$.
	We can represent the fiber of $q$ at $y$ as the $\infty$-category of functors from $\cC_y$ to $\cE_y$.
	Notice that $\cE_y$ is an $\infty$-groupoid.
	This implies that
	\[ \Fun(\cC_y, \cE_y) \simeq \Fun(\mathrm{EnvGpd}(\cC_y), \cE_y) , \]
	where $\mathrm{EnvGpd}(\cC_y)$ denotes the enveloping $\infty$-groupoid of $\cC_y$, in other words, $\mathrm{EnvGpd}(\cC_y)$ is the $\infty$-groupoid obtained by $\cC_y$ by inverting \emph{all} arrows of $\cC_y$.
	Now recall that $\cE_y$ is $n$-truncated and that $\mathrm{EnvGpd}(\cC_y)$ is $(n+1)$-connected.
	It follows that there is an equivalence
	\[ \Fun(\mathrm{EnvGpd}(\cC_y), \cE_y) \simeq \cE_y , \]
	induced by sending $s \colon \cC_y \to \cE_y$ to $s(z)$ for a fixed object $z \in \cC_y$.
	Let now
	\[ a^*_\cC \colon \cC_y \to \cC_x , \qquad a_\cE^* \colon \cE_y \to \cE_x  \]
	be the induced functors.
	Note that $\Fun(\cC_y, \cE_x)$ is categorically equivalent to a point.
	So the evaluation at $a^*_\cC(z)$ induces a diagram
	\[ \begin{tikzcd}
		\cC_y \arrow{rr}{s(z)} \arrow{d}{a^*_\cC} & &  \cE_y \arrow{d}{a^*_\cE} \\
		\cC_x \arrow{rr}{a^*_\cE(s(z))} & & \cE_x ,
	\end{tikzcd} \]
	which is commutative up to homotopy;
	moreover, the commutative diagram above exhibits the constant functor associated to $a^*_\cE(s(z))$ as a cartesian lift of $a \colon x \to y$.
	This proves that $q$ is a cartesian fibration.
	
	The description of the $p$-cartesian edges shows that the associated functor is equivalent to the functor classified by $\cE$.
	Therefore, $q$ is equivalent, as cartesian fibration, to $p \colon \cE \to \cD$.
	In particular, taking $\cD$-sections, we obtain that the canonical morphism
	\[ \Map_\cD(\cC, \cE) \simeq \Map_\cD(\cD, \cE) \]
	is an equivalence.
	In other words, $f \colon \cC \to \cD$ is $n$-cofinal.
\end{proof}

\begin{cor} \label{cor:Delta_cofinal}
	The inclusion $\mathbf \Delta_{\le n+2}\op \subseteq \mathbf \Delta\op$ is $n$-cofinal.
\end{cor}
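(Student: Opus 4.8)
The plan is to deduce the corollary directly from \cref{prop:n_connected_Quillen_Theorem_A}, the $n$-cofinal refinement of Quillen's Theorem~A established just above. Applying that proposition to the inclusion $f \colon \mathbf \Delta_{\le n+2}\op \hookrightarrow \mathbf \Delta\op$, I am reduced to checking that for every $[m] \in \mathbf \Delta$ the comma category
\[ \cC_{[m]} \coloneqq \mathbf \Delta_{\le n+2}\op \times_{\mathbf \Delta\op} (\mathbf \Delta\op)_{[m]/} \]
is weakly $(n+1)$-connected.

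First I would unwind this comma category. Tracing through the two levels of opposites, an object of $\cC_{[m]}$ is a morphism $\alpha \colon [k] \to [m]$ in $\mathbf \Delta$ with $k \le n+2$, and a morphism from $\alpha \colon [k] \to [m]$ to $\alpha' \colon [k'] \to [m]$ is a map $\gamma \colon [k'] \to [k]$ in $\mathbf \Delta$ with $\alpha \circ \gamma = \alpha'$. Thus $\cC_{[m]}$ is the opposite of the full subcategory $\mathbf \Delta_{\le n+2}/[m] \subseteq \mathbf \Delta/[m]$ spanned by the objects of dimension at most $n+2$. Since the classifying space of a category is homeomorphic to that of its opposite, it suffices to bound the connectivity of $\mathbf \Delta_{\le n+2}/[m]$.

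Next I would recognize $\mathbf \Delta/[m]$ as the category of simplices of the representable simplicial set $\Delta^m = N([m])$, under which $\mathbf \Delta_{\le n+2}/[m]$ corresponds to the simplices of dimension at most $n+2$. The geometric input I would invoke is the standard fact that the classifying space of the category of simplices of dimension $\le j$ of a simplicial set $K$ is homotopy equivalent to the realization of the $j$-skeleton $\mathrm{sk}_j K$, compatibly with the filtration by dimension; hence $B(\mathbf \Delta_{\le n+2}/[m])$ is homotopy equivalent to $\lvert \mathrm{sk}_{n+2}\Delta^m \rvert$. It then remains to compute connectivity: when $m \le n+2$ this skeleton is all of the contractible $\Delta^m$, and when $m > n+2$ the acyclicity of the augmented simplicial chain complex of $\Delta^m$ forces the truncated complex computing $\mathrm{sk}_{n+2}\Delta^m$ to have reduced homology concentrated in degree $n+2$, so that $\mathrm{sk}_{n+2}\Delta^m$ is simply connected (as $n+2 \ge 2$) and $(n+1)$-connected. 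In either case $\cC_{[m]}$ is weakly $(n+1)$-connected, as required.

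I expect the genuine obstacle to be the bookkeeping in the two middle steps rather than the final homology count: one must correctly untangle the nested opposites in the comma construction to arrive at $\mathbf \Delta_{\le n+2}/[m]$, and one must make precise the identification of the truncated category of simplices with the skeleton at the level of classifying spaces, where the degenerate simplices require a moment of care. Once these identifications are in place, the connectivity of the skeleton of a contractible simplex is a routine computation.
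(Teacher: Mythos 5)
Your proof is correct and follows exactly the route the paper intends: the corollary is stated there without proof as an immediate application of \cref{prop:n_connected_Quillen_Theorem_A}, and your argument supplies precisely the missing verification, namely that the comma categories $\mathbf\Delta_{\le n+2}\op \times_{\mathbf\Delta\op} (\mathbf\Delta\op)_{[m]/} \simeq (\mathbf\Delta_{\le n+2}/[m])\op$ are weakly $(n+1)$-connected because their classifying spaces realize $\mathrm{sk}_{n+2}\Delta^m$, which is contractible for $m \le n+2$ and simply connected with reduced homology concentrated in degree $n+2$ (hence $(n+1)$-connected by Hurewicz) for $m > n+2$. The unwinding of the nested opposites, the skeleton identification (which for $\Delta^m$ can be made precise by retracting onto the poset of nondegenerate simplices, whose nerve is the barycentric subdivision of $\mathrm{sk}_{n+2}\Delta^m$), and the connectivity count are all accurate.
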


\bibliographystyle{plain}
\bibliography{dahema}

\end{document}